\theoremstyle{plain}
\newtheorem*{thm:hyperelliptic-thetachar}{Theorem~\ref{thm:thetachar-Bd}}
\newtheorem*{thm:hyperelliptic-torsion}{Theorem~\ref{thm:torsion-Bd}}
\newcommand{\gk}{\mathfrak{g}}
\newcommand{\Fr}{\sigma}
\newcommand{\ghat}{\chi_{\gamma}}
\newcommand{\tilg}{t_{\gamma}}
\newcommand{\fig}{F_1(\Gamma)(1)}
\newcommand{\fim}{F_1(\gamma,\mu)}
\newcommand{\oc}{\mathscr{C}}
\newcommand{\od}{\mathscr{D}}
\newcommand{\oj}{\mathscr{J}}
\newcommand{\os}{\mathscr{S}}
\renewcommand{\sq}{\mathscr{Q}}
\newcommand{\ku}{K^{u}}
\newcommand{\dr}{\Div^{\{r\}}}
\newcommand{\dro}{\Div^{\{1\}}}
\newcommand{\drz}{\Div^{\{0\}}}
\newcommand{\dt}{\Div^{\{2\}}}
\newcommand{\pt}{\Pic^{\{2\}}}
\newcommand{\pr}{\Pic^{\{r\}}}
\newcommand{\prz}{\Pic^{\{0\}}}
\newcommand{\bh}{\bar{h}}
\newcommand{\lxy}{L_{XY}}
\newcommand{\lzw}{L_{ZW}}
\newcommand{\lmzw}{L_{-ZW}}
\newcommand{\cxy}{C_{XY}}
\newcommand{\czw}{C_{ZW}}
\newcommand{\cmzw}{C_{-ZW}}
\newcommand{\hzw}{h_{ZW}}
\newcommand{\hmzw}{h_{-ZW}}
\begin{document}

\title{A descent map for curves with totally degenerate semi-stable reduction}
\author{Shahed Sharif\thanks{Acknowledgments to Dino Lorenzini and Wayne Aitken for many helpful comments.}
}
\maketitle

\section{Introduction}
\label{sec:introduction}

Let $K$ be a local field with residue field $k$ of characteristic $p>0$; that is, $K$ is a finite extension of the $p$-adic field $\Q_p$, or it is $\F_q((T))$ where $q$ is a power of $p$. Write $G$ for the absolute Galois group of $K$, and $\gk$ for the absolute Galois group of the residue field. Let $C$ be a smooth, proper, geometrically integral curve over $K$ with genus $g\geq 2$. 
Letting $\oc$ be the minimal proper regular model for $C$ over the discrete valuation ring $\ok$ of $K$, we write $C_k$ for the special fiber. Let $C_{\bar{k}}$ be the base-extension of the special fiber to $\bar{k}$, the algebraic closure of the residue field; it equals the special fiber of the minimal proper regular model for $C$ over $\ku$, the maximal unramified extension of $K$. We will assume that $C$ has totally degenerate semi-stable reduction; that is, $C_{\bar{k}}$ is connected, reduced and consists of a finite collection of $\Pro^1$s such that a formal neighborhood of each singularity is isomorphic to $\Spec \bar{k}[[x,y]]/(xy)$.

Let $\Pic_{C/K}$ denote the Picard scheme of $C/K$, and $\Pic C$ the Picard group, with $\Pic C \subseteq \Pic_{C/K}(K)$. (These are equal if $C(K)\neq \emptyset$, but not generally.) Let $r$ be a positive integer. As $\Pic_{C/K}$ is an abelian scheme, we have a multiplication-by-$r$ morphism 
\[
[r]: \Pic_{C/K} \to \Pic_{C/K}
\]
which induces group homomorphisms $[r]:\Pic_{C/K}(K) \to \Pic_{C/K}(K)$ and $[r]:\Pic C \to \Pic C$. Little is known in general about the image of these maps. The canonical sheaf defines a canonical element in $\Pic C$, and it is natural to wonder whether this element is in the image of any of the maps $[r]$. In this article, we investigate this problem in the case $p\nmid r$ (which we henceforth assume), and provide a method for answering this type of question when the reduction of the Jacobian of the curve is purely toric. 
 Under certain conditions on the $C_k$, we will define a subgroup $\pr C \subset \Pic C$, a finite set of classes $D_i$, a finite collection of finite cyclic groups ${\mu}_i$, and maps ${\gamma}_i:\pr C \to {\mu}_i$ which satisfy the following condition:
\begin{theorem}
  Given $L\in \pr C$, we have $L \in r\Pic C$ if and only if there is some $D_j$ such that $L+D_j \in \ker {\gamma}_i$ for every $i$.
\end{theorem}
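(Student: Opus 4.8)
The plan is to recast the condition $L \in r\Pic C$ in terms of an explicit descent that the totally degenerate hypothesis makes fully combinatorial after passing to the maximal unramified extension $\ku$. So I would first work over $\ku$, where the special fiber of the minimal regular model is $C_{\bar k}$ and one has the dual graph $\Gamma$, the multidegrees on its components $\Pro^1$, the cycle lattice $\fg$ with its unramified $\gk$-action, and the toric (Raynaud) uniformization of $J := \Pic^0_{C/K}$; from these, $\pr\bar C$, $\prc$, and the Galois module $J[r]$ all receive combinatorial descriptions. The finite cyclic groups $\mu_i$ appear because the local-cohomology pieces that enter --- unramified $H^1$'s, quotients of $\bar k^{\times}$ by $m$-th powers, groups of $m$-th roots of unity --- are cyclic.

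Now, $L \in r\Pic C$ holds exactly when there is a divisor $D$ on $\bar C$ with $rD \sim L$ whose class is $G$-invariant and is represented by a genuine line bundle on $C/K$. Producing $D$ over $\bar K$ is unobstructed once $r \mid \deg L$ (built into membership in $\pr C$); the substance is (i) making the class of $D$ rational over $\ku$ --- a torsor obstruction the toric uniformization renders combinatorial in $\Gamma$ --- (ii) descending it from $\ku$ to $K$ --- a $\gk$-cohomology problem --- and (iii) lifting it from $\Pic_{C/K}(K)$ to $\Pic C$ --- a period-index problem in $\operatorname{Br}K$. I would define $\gamma_i(L)$ by performing this descent on an explicit $\ku$-model of a chosen representative $D$ and decomposing the resulting obstruction into the $\mu_i$ via Raynaud's uniformization and the mod-$r$ monodromy pairing. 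Crucially $\gamma_i$ depends on the choice of $D$, and a different choice alters $(\gamma_i(L))_i$ by the $\gamma$-values of finitely many universal classes; the $D_j$ are representatives for this ambiguity, so replacing $L$ by a suitable $L+D_j$ undoes a change of choice. If $L \in r\Pic C$, every obstruction genuinely vanishes, so some admissible choice --- some $D_j$ --- makes all $\gamma_i(L)$ trivial, which is the forward implication.

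The converse is the crux, and it requires two completeness statements. First, completeness of the obstruction: $(\gamma_i(L))_i$ must record \emph{all} of the obstruction living in (pieces of) $H^1(K, J[r])$ and $\operatorname{Br}K$ --- an exactness/surjectivity argument in the cohomology of Raynaud's filtration, which forces one to control the monodromy pairing and the unramified $\gk$-action on $\fg$ simultaneously. Second, completeness of $\{D_j\}$: the finitely many universal classes must genuinely exhaust the ambiguity, so that vanishing of all $\gamma_i$ after translation by some $D_j$ is equivalent to such vanishing for some admissible choice, hence to $L \in r\Pic C$. The main obstacle is the interaction of these two layers over the combinatorial model: proving that the arithmetic of $\fg$ and of the monodromy pairing meshes with the period-index obstruction exactly as the explicit formulas for the $\gamma_i$ and the list $\{D_j\}$ predict, so that the descent, corrected by the $D_j$, cuts out precisely $r\Pic C$.
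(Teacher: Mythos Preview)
Your proposal is not so much wrong as it is aimed at a harder target than the one actually in front of you, and as a consequence it does not constitute a proof: the central steps (``an exactness/surjectivity argument in the cohomology of Raynaud's filtration'', ``completeness of $\{D_j\}$'') are asserted rather than carried out, and your identifications of the $\mu_i$, the $\gamma_i$, and the $D_j$ do not match what the paper sets up.

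The paper's argument is far more elementary and avoids every one of the three layers you name. First, under the standing hypothesis that each component of $C_k$ normalizes to $\Pro^1_k$, one has $\Pic C = (\Pic \bar C)^G$ and index $1$, so there is \emph{no} Brauer/period--index obstruction; step (iii) of your plan is vacuous. Second, since $p\nmid r$, the kernel of reduction $J(K)\to J_k(k)$ is uniquely $r$-divisible, so $r$-divisibility in $\Pic C$ is decided entirely on the special fiber $J_k(k)$; this replaces your steps (i) and (ii) by a single finite-group question, with no appeal to rigid uniformization, $H^1(K,J[r])$, or the monodromy pairing. Third, on the special fiber one has the exact sequence of $k$-points $0\to T(k)\to J_k(k)\to \Phi\to 0$ (here $\Phi$ is constant), and the snake lemma for multiplication by $r$ gives
\[
\Phi[r]\xrightarrow{\ \nu\ }\frac{T(k)}{rT(k)}\to \frac{J_k(k)}{rJ_k(k)}.
\]
Thus for $L\in \pr C$ represented by $D\in\drz C$, one has $L\in r\Pic C$ iff the image of $D$ in $T(k)/rT(k)$ lies in $\operatorname{im}\nu$. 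The $\gamma_i$ are precisely the characters $\chi_{\gamma_i}$ coming from a principal decomposition $T=\prod T_i$, with $\mu_i=\mu_{f_i(q)}(\bar k^\times)/r\mu_{f_i(q)}(\bar k^\times)$; the $D_j$ are the images $\nu(\delta)$ for $\delta\in\Phi[r]$, computed explicitly via principal divisors of prescribed multidegree. Your description of the $D_j$ as correcting the ambiguity in the choice of an $r$-th root is morally related (the ambiguity lands in $J[r]$, and $\Phi[r]$ is the piece of it not already absorbed by working modulo $r\mu(T_i)$), but you never make this identification, and without it the ``completeness of $\{D_j\}$'' you need is exactly the content of the snake-lemma sequence above.

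In short: drop the cohomological machinery, use the reduction isomorphism $J(K)/rJ(K)\cong J_k(k)/rJ_k(k)$, and run the snake lemma on $0\to T(k)\to J_k(k)\to\Phi\to 0$. That is the whole proof.
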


The above result, phrased more explicitly, appears as Corollary~\ref{cor:toric-descent-with-functions} below.

The strength of the theorem lies in the explicit description of the $\gamma_i$. Recall that a \emph{theta characteristic} (also called \emph{spin structure}) is an invertible sheaf $L$ whose square $L^{\otimes 2}$ equals the canonical class. As examples we will show, for $C$ lying in certain families of hyperelliptic curves, or in a specific family of genus 4 curves, how to determine if $C$ has a rational theta characteristic, and we will also show how to compute the prime-to-$p$ rational torsion in the Jacobian of $C$.\footnote{In practice, if $K=\Q_p$ then our method often gives the full rational torsion on the Jacobian; see Remark~\ref{rmk:full-torsion}.} Note that there is no known algorithm to determine in general either the reduction type or the size of the rational torsion in the Jacobian of curves of genus $g\geq 3$.

We will prove the following results:
\begin{thm:hyperelliptic-thetachar}
  Let $K$ be a local field with discrete valuation ring $\ok$, uniformizer $\pi$, and residue field $k$ of characteristic $p$. Let $g\in \ok[x]$ be monic of degree $d\geq 3$ and such that $p\nmid 2d$. Let $h\in \ok[x]$ be a polynomial of degree $e$ with $e\leq 2d$. Suppose that $(\pi,g,g')$ and $(\pi, g, h)$ are both the unit ideal in $\ok[x]$. Let $C$ be the nonsingular projective curve with affine piece given by
  \[
  y^2 = g^2 + \pi h.
  \]
  For $x\in \ok$, write $\bar{x}$ for the reduction of $x \pmod{\pi}$. Similarly for $f\in \ok[x]$, write $\bar{f}$ for the reduction in $k[x]$.
  \begin{enumerate}
      \item Suppose $d$ is odd and $\bar{g}$ factors over $k$ as
    \[
    \bar{g}(x) = (x-\alpha_0) g_1(x)\cdots g_s(x)
    \]
    where $\alpha_0\in k$ and each of the $g_i$ is irreducible over $k$. Let $\alpha_i\in \bar{k}$ be a root of $g_i(x)$. Then $C$ has a rational theta characteristic if and only if $\Nm \bar{h}(\alpha_0)\bar{h}(\alpha_i) \in k^{\times 2}$ for all $i$; here $\bar{h}(\alpha_i)\neq 0$ for all $i$, and the norm is computed from $k(\alpha_i)$ to $k$.
      \item If $d$ is even or $\bar{g}$ is irreducible, then $C$ has a rational theta characteristic.
  \end{enumerate}
\end{thm:hyperelliptic-thetachar}

Given $C/K$, we construct its Jacobian $J/K$ and the N\'eron model $\oj$ over $\ok$. Recall that if $C$ has totally degenerate semi-stable reduction, then the special fiber of $\oj$ is an extension of a finite group by a torus $T$.
\begin{thm:hyperelliptic-torsion}
    Let $C$ be as in the previous theorem, but now suppose $\bar{g}$ splits into linear factors over $k$. Suppose $q=\# k$. Let $J(K)(p')$ be the largest subgroup of the $K$-rational torsion on the Jacobian of $C$ which has order prime to $p$. Let $\alpha_i$ be the roots of $\bar{g}$. Let $H$ be the subgroup of $k^\times$ generated by the numbers $\bar{h}(\alpha_i)/\bar{h}(\alpha_0)$. Let $n$ be the order of $\frac{H\cdot k^{\times d}}{k^{\times d}}$ and let $m = d/n$. Then
  \[
  J(K)(p') \isom \bigg(\frac{\Z}{(q-1)\Z}\bigg)^{d-2} \oplus \frac{\Z}{n(q-1)\Z} \oplus \frac{\Z}{m\Z}.
  \]
\end{thm:hyperelliptic-torsion}

We prove stronger versions of the above two results when $d=3$; see Theorems~\ref{thm:thetachar-B3} and~\ref{thm:torsion-B3}.

In \S~\ref{sec:genus-4-nonhyp}, there are similar results for the family of nonhyperelliptic genus 4 curves given in $\Pro^3_K$ as the intersection of the quadric $XY=ZW$ and the cubic $(X-Y)(Z-W)(Z+W) = \pi \eps$, where $\eps$ varies in a Zariski open subset of the set of all homogeneous cubic forms in $\ok[X,Y,Z,W]$. In Theorem~\ref{thm:genus4-torsion}, we calculate the prime-to-$p$ $K$-rational torsion on the Jacobian of such a curve, and in Theorems~\ref{thm:genus4-thetachar} and~\ref{thm:genus4-cube-root} we determine whether there exists a rational theta characteristic and a rational \emph{cube root} of the canonical class. (The literature sometimes refers to $r$th roots of the canonical class as \emph{$r$-spin structures}.)

The methods described below so far only work when the normalizations of the components of $C_k$ are all isomorphic over $k$ to $\Pro^1_k$. In particular, the \emph{index} of such $C/K$, the gcd of degrees of $K$-rational divisors on $C$, is 1. The idea is simple: suppose we wish to determine if $L\in \Pic C$ is $r$-divisible. We translate $L$ by a rational divisor which is known to be $r$-divisible, and such that the class of the translate represents a point on the toric part of the Jacobian. We then apply the theory of algebraic tori. There are a number of technical difficulties to resolve along the way, one of which involves computation of the $K$-rational prime-to-$p$ torsion on the Jacobian---see Propositions~\ref{prop:computing-nu-with-div-f} and Corollary~\ref{cor:torsion-nonsplit}.

Results for determining rationality of theta characteristics, but with other methods, were proven by a number of different authors. Atiyah~\cite{atiyah1971} showed that if the Galois action on the 2-torsion of the Jacobian factors through a cyclic group, then $C$ has a rational theta characteristic; when $\car k \neq 2$ and $C$ has good reduction, this immediately implies the existence of a rational theta characteristic, and motivates our study of degenerating curves. Mumford~\cite{mumford1971} in the case of hyperelliptic curves gave explicit representations of the theta characteristics in terms of the Weierstrass points of $C$; thus, knowing the Galois action on the Weierstrass points enables one to determine if there is a rational theta characteristic.  Parimala and Scharlau~\cite{parimala-scharlau1994} in results extended by Suresh~\cite{suresh1994} found a condition for the rationality of theta characteristics of hyperelliptic curves involving the splitting of a particular quaternion algebra. Suresh also gives a method for computing the order of the 2-torsion subgroup of the Jacobian.

Given a curve $C$ as above, a related question is whether all of the $r$-torsion on the Jacobian of $C$, or more generally all of the $r$th roots of a given line bundle on $C$, are rational over $\ku$. This is essentially a geometric question, and has been answered by Chiodo~\cite{chiodo2009}. Pacini gives a more explicit answer for the case of theta characteristics~\cite{pacini2010}. See also Gross-Harris~\cite{gross-harris2004} for a description of the rational points on the moduli space of curves all of whose theta characteristics are rational.

Finally, see Poonen-Rains~\cite{poonen-rains2011} for a relationship of the rationality of theta characteristics to a certain cup product over an abelian variety. 

\section{Basic facts on algebraic tori over finite fields}
\label{sec:basic-facts-algebr}

In the following, we let $q=\#k$, let $T$ be a $g$-dimensional algebraic torus defined over $k$, and let $X(T)$ be the character group of $T_{\bar{k}}$. Let $\Fr$ be the Frobenius automorphism acting on $\bar{k}$ with fixed field $k$. The character group $X(T)$ is a free $\Z$-module of rank $g$ equipped with an action of $\gk$. In this section we will compute the group of rational points $T(k)$ based on knowledge of $X(T)$ as a $\Z[\gk]$-module.

Henceforth, unless otherwise stated, all modules are $\Z[\gk]$-modules. For example, ``$X(T)$ is generated by $\chi$'' means generated over $\Z[\gk]$.\begin{definition}
  Let $X$ be a $\Z[\gk]$-module. We say $X$ is \emph{principal} if it can be generated by a single element. We say $X$ is \emph{principally decomposable} if it can be written as a direct sum of principal submodules.

  If $X=X(T)$, the character group of an algebraic torus, we use the above terms to describe $X$ and $T$ interchangeably. We say that $T = \prod T_j$ is a \emph{principal decomposition} of $T$ if $T$ is the fiber product over $k$ of the $T_j$, and each $T_j$ is a principal torus.
\end{definition}

\begin{example}
  Let $\ell/k$ be the unique degree $g$ extension. Let $T$ be the Weil restriction of scalars $R_{\ell/k} \G_m$. Recall that $T$ has the universal property that for any $k$-scheme $S$, we have a functorial isomorphism
  \[
  T(S) = \G_m(S_\ell)
  \]
  where $S_\ell$ means the base-extension $S \times_{\text{Spec } k} \text{Spec } \ell$. Explicitly, $T_\ell \isom (\G_{m,\ell})^g$, and the usual action of $\Fr$ is twisted by the automorphism which cyclically permutes the factors; that is,
  \[
  \Fr(x_1,\dots,x_g) = (\Fr x_g, \Fr x_1, \dots, \Fr x_{g-1}).
  \]
Alternatively, we can characterize $T$ by setting $X(T) = \Z[\chi]/(\chi^g-1)$, and $\Fr$ acts as multiplication by $\chi$. We observe that $T$ is principal.
\end{example}

\begin{remark}
  Suppose $T=\prod T_i$ is a principal decomposition of $T$. The projection $T \to T_i$ induces an inclusion $X(T_i) \subset X(T)$. This inclusion will be used without comment from here on.

  From now on, we will assume that unless otherwise stated all tori under discussion are principally decomposable.
\end{remark}

\begin{definition}
  If $T$ is the restriction of scalars $R_{\ell/k}\G_m$ for some finite extension $\ell/k$, then we say that $T$ is a \emph{norm torus}, or \emph{$\ell$-norm torus} for clarity. If $T$ is a product of norm tori, we say it is a \emph{normal torus}.
\end{definition}

\begin{proposition}[Ono~\cite{ono1961}, Prop.~1.2.2]\label{prop:double-dual-torus}
  $T(k) = \Hom_\gk(X(T), \bar{k}^\times)$.
\end{proposition}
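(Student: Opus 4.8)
The plan is to build a $\gk$-equivariant isomorphism between $T(\bar k)$ and $\Hom_\Z(X(T),\bar k^\times)$ and then pass to Galois invariants. Recall that $T$ is affine and becomes isomorphic to $\G_m^g$ over a finite extension of $k$, hence over $\bar k$; fixing such a splitting identifies $X(T)$ with $\Z^g$ as an abelian group. First I would introduce the evaluation pairing: for $t\in T(\bar k)$ and $\chi\in X(T)=\Hom_{\bar k\text{-grp}}(T_{\bar k},\G_{m,\bar k})$ the value $\chi(t)\in\bar k^\times$ is additive in $\chi$, so $t\mapsto(\chi\mapsto\chi(t))$ defines a group homomorphism
\[
\mathrm{ev}\colon T(\bar k)\longrightarrow\Hom_\Z\big(X(T),\bar k^\times\big).
\]
In the chosen coordinates $\mathrm{ev}$ carries $(t_1,\dots,t_g)$ to the homomorphism $\Z^g\to\bar k^\times$ sending the $i$th standard basis vector to $t_i$, so it is a bijection; thus $\mathrm{ev}$ is an isomorphism of abelian groups.

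Next I would check that $\mathrm{ev}$ respects the natural $\gk$-actions: $\gk$ acts on $T(\bar k)$ through $\bar k$ (using that $T$ is a $k$-scheme), on $X(T)$ by the given action, on $\bar k^\times$ tautologically, and on $\Hom_\Z(X(T),\bar k^\times)$ by $(\tau\cdot\phi)(\chi)=\tau\big(\phi(\tau^{-1}\chi)\big)$. Since $T$ and $\G_m$ are both defined over $k$, each $\tau\in\gk$ sends a character $\chi$ to $\tau\circ\chi\circ\tau^{-1}$, which on $\bar k$-points yields the relation $\chi(\tau t)=\tau\big((\tau^{-1}\chi)(t)\big)$; this is precisely the identity $\mathrm{ev}(\tau t)=\tau\cdot\mathrm{ev}(t)$. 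Finally, because $T$ is of finite type over the perfect field $k$ (so that $\bar k=k^{\mathrm{sep}}$) we have $T(k)=T(\bar k)^\gk$ by Galois descent for the functor of points of an affine scheme, while $\Hom_\Z(X(T),\bar k^\times)^\gk=\Hom_\gk(X(T),\bar k^\times)$ by definition; taking $\gk$-invariants of the equivariant isomorphism $\mathrm{ev}$ gives the asserted identification.

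There is no substantial obstacle here: once the evaluation map is set up correctly, the only thing needing genuine care is the bookkeeping with Galois-action conventions (the placement of $\tau^{-1}$ and the twist in the action on $\Hom$), after which equivariance is a one-line verification, and the reduction $T(k)=T(\bar k)^\gk$ is immediate since $T$ is affine and $k$ is finite. Alternatively one could simply invoke the anti-equivalence between $k$-tori and finitely generated $\Z[\gk]$-lattices, but spelling out the evaluation pairing is preferable, as it makes the explicit computations of $T(k)$ carried out later in this section more transparent.
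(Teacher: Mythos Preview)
Your argument is correct and follows exactly the map the paper indicates: the paper does not give its own proof but cites Ono and remarks that the isomorphism is the evaluation map $x\mapsto(\chi\mapsto\chi(x))$, which is precisely what you construct and whose equivariance you verify. Your write-up thus fills in the details behind the citation rather than offering an alternative route.
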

The map is given by evaluation; that is, for $x\in T(k)$, we associate the homomorphism $e_x$ given by $e_x(\chi) = \chi(x)$.

 If the action of $\gk$ is trivial, we see that $T(k) = \Hom(X(T), k^\times)$. If the action of $\gk$ factors through $\Gal(\ell/k)$, since $T(k) \hra T(\ell)$, we observe that we may replace $\bar{k}^\times$ in the proposition with $\ell^\times$.

Given a $\Z$-basis of $X(T)$, the action of $\Fr$ may be represented by an element of $\Gl(g, \Z)$. We let $f(x)$ be the characteristic polynomial of the matrix obtained this way; it is independent of the choice of basis.

Given a positive integer $n$ for which $(n,q) = 1$, let $\mu_n$ denote the \'etale sheaf over $k$ of $n$th roots of unity.
\begin{proposition}\label{prop:indecomposable-torus-mu}
  Suppose $T$ is principal. Then $T(k) \isom \mu_{f(q)}(\bar{k}^\times)$.
\end{proposition}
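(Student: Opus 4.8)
The plan is to translate the statement, via Ono's result (Proposition~\ref{prop:double-dual-torus}), into a statement about the cyclic $\Z[\Fr]$-module $X(T)$, and then unwind it by hand.

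First I would pin down the module structure. Since $T$ is principal, $X(T) = \Z[\gk]\cdot\chi$ for a single generator $\chi$; and since the image of $\gk$ in $\Gl(g,\Z)$ is finite, $\gk$ acts through the subgroup generated by $\Fr$, so $X(T)$ is a cyclic module over $\Z[\Fr]\cong\Z[x]$ with $x$ acting as $\Fr$. Hence $X(T)\cong\Z[x]/I$, where $I=\operatorname{Ann}_{\Z[x]}(\chi)$; because $\chi$ generates, $I$ is in fact the annihilator of all of $X(T)$. Cayley--Hamilton applied to the matrix of $\Fr$ on a $\Z$-basis shows $f(\Fr)$ kills $X(T)$, so $f\in I$ and we get a surjection $\Z[x]/(f)\twoheadrightarrow\Z[x]/I\cong X(T)$. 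Since $f$ is monic of degree $g$, both sides are free of rank $g$ over $\Z$, and a surjection between free $\Z$-modules of the same finite rank is an isomorphism; therefore $I=(f)$ and
\[
X(T)\;\cong\;\Z[x]/(f(x)),\qquad \chi\leftrightarrow 1,\ \Fr^i\chi\leftrightarrow x^i.
\]

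Next I would apply Proposition~\ref{prop:double-dual-torus}: $T(k)=\Hom_\gk(X(T),\bar{k}^\times)$, a group under pointwise multiplication. Evaluation at the generator, $\phi\mapsto\phi(\chi)$, is a homomorphism to $\bar{k}^\times$, injective because $\chi$ generates $X(T)$. I claim its image is exactly $\mu_{f(q)}(\bar{k}^\times)$. Write $f(x)=\sum_{i=0}^g a_i x^i$ with $a_g=1$. By $\gk$-equivariance $\phi(\Fr^i\chi)=\Fr^i(\phi(\chi))=\phi(\chi)^{q^i}$, so applying $\phi$ to the relation $f(\Fr)\chi=\sum_i a_i\Fr^i\chi=0$ gives $\phi(\chi)^{\sum_i a_iq^i}=\phi(\chi)^{f(q)}=1$; hence the image lies in $\mu_{f(q)}(\bar{k}^\times)$. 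Conversely, given $\zeta\in\bar{k}^\times$ with $\zeta^{f(q)}=1$, define $\phi$ on the $\Z$-basis $\chi,\Fr\chi,\dots,\Fr^{g-1}\chi$ by $\Fr^i\chi\mapsto\zeta^{q^i}$ and extend $\Z$-linearly. Using $\Fr^g\chi=-\sum_{i<g}a_i\Fr^i\chi$, the identity $\zeta^{f(q)}=1$ is precisely what forces $\phi(\Fr^g\chi)=\zeta^{q^g}$; inductively $\phi(\Fr^j\chi)=\zeta^{q^j}$ for all $j$, and then $\phi(\Fr\cdot m)=\phi(m)^q$ holds on the spanning set $\{\Fr^i\chi\}$, hence everywhere, so $\phi\in\Hom_\gk(X(T),\bar{k}^\times)$ with $\phi(\chi)=\zeta$. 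Thus evaluation at $\chi$ is the desired isomorphism $T(k)\isom\mu_{f(q)}(\bar{k}^\times)$.

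I do not expect a serious obstacle: the real content is the identification $I=(f)$ (Cayley--Hamilton together with the ``surjection of $\Z^g$ is an isomorphism'' fact) and the bookkeeping with the equivariance relation; everything else is routine. The one point worth recording explicitly is that $\mu_{f(q)}$ is a legitimate \'etale sheaf over $k$, i.e.\ that $f(q)$ is nonzero and prime to $q$. This is immediate from the structure of $\Fr$: its eigenvalues on $X(T)$ are roots of unity, so $f(q)=\det(q\cdot\mathrm{id}-\Fr)\neq 0$ for $q\geq 2$, and reducing modulo $p$ gives $f(q)\equiv f(0)=\det(-\Fr)=\pm1$, whence $p\nmid f(q)$.
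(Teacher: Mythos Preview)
Your proof is correct and follows essentially the same route as the paper: both use Proposition~\ref{prop:double-dual-torus} to identify $T(k)$ with $\Hom_\gk(X(T),\bar{k}^\times)$, then show that evaluation at the generator $\chi$ gives an isomorphism onto $\mu_{f(q)}(\bar{k}^\times)$, checking injectivity from the fact that $\chi,\Fr\chi,\dots,\Fr^{g-1}\chi$ span $X(T)$ over $\Z$ and surjectivity by building a $\gk$-equivariant map from any $\zeta$ with $\zeta^{f(q)}=1$. The only difference is that you supply a clean justification (Cayley--Hamilton plus the rank argument) for the identification $X(T)\cong\Z[x]/(f)$, whereas the paper simply asserts that the $\Fr^i\chi$ form a $\Z$-basis and that the characteristic polynomial gives the only relation; your extra paragraph on $p\nmid f(q)$ likewise expands a remark the paper makes without proof.
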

Since $f(q) \equiv\pm 1 \pmod{q}$, $\mu_{f(q)}$ is an \'etale sheaf over $k$.

We give some simple examples to demonstrate the proposition. If $T=\G_m$, then $f(x) = x-1$ and $f(q) = q-1$. The right hand side above becomes $\mu_{q-1}(\bar{k}^\times) = k^\times$.

Now suppose that $T$ is an $\ell$-norm torus, where $[\ell:k]=g$. Our description of $X(T)$ as a $\Z[\gk]$-module shows that $f(x) = x^g - 1$. Then $T(k) \isom \mu_{q^g - 1}(\bar{k}^\times) = \ell^\times$. We verify this by the universal property of Weil restriction of scalars: $T(k) = \G_m(\ell) = \ell^\times$.

\begin{proof}
  Since $T$ is principal, there is some $\chi \in X(T)$ such that $\chi$ is a $\Z[\gk]$-generator for $X(T)$. We will show that the map
  \begin{align*}
    T(k) &\to \mu_{f(q)}(\bar{k}^\times) \\
    x &\mapsto \chi(x)
  \end{align*}
  is an isomorphism.

  First observe that $\chi$, $\chi^\Fr$, \dots, $\chi^{\Fr^{g-1}}$ form a $\Z$-basis for $X(T)$. Thus, the values of these characters uniquely characterize points in $T(\bar{k})$. Given $x\in T(k)$, by Proposition~\ref{prop:double-dual-torus} we must have 
  \[
  \chi^{\Fr^i}(x) = \Fr^i \chi(x) = \chi(x)^{q^i}
  \]
 for every $i$. Therefore the value $\chi(x)$ determines all of the values $\chi^{\Fr^i}(x)$ for $i = 1, \dots, g-1$. This tells us that $\chi: T(k) \to \bar{k}^\times$ is injective. 

Now choose $\omega \in \mu_{f(q)}(\kb^\times)$. We will construct a $\gk$-equivariant homomorphism $e_x:X(T) \to \bar{k}^\times$ for which $e_x(\chi) = \omega$. In order for $e_x$ to be $\gk$-equivariant, we must have
\[
e_x(\chi^{\Fr^i}) = e_x(\chi)^{q^i}
\]
for all $i$. For $0\leq i \leq g-1$, these are independent constraints. The only additional constraint is given by the characteristic polynomial; that is, we have the identity of characters
  \[
  \chi^{f(\sigma)} = 0,
  \]
  or $e_x(\chi)^{f(q)} = 1$. This holds by our choice of $\omega$, and so the proposition follows.
\end{proof}

\begin{theorem}\label{thm:T-sum-mu-i}
  Suppose $T = \prod T_i$ is a principal decomposition of $T$ over $k$. Let $f_i(x)$ be the characteristic polynomial of Frobenius acting on $X(T_i)$. Let $\chi_i$ be a $\Z[\gk]$-generator for $X(T_i)$. Then the map
  \[
  \oplus \chi_i: T(k) \to \oplus \mu_{f_i(q)}(\bar{k}^\times)
  \]
  is an isomorphism.
\end{theorem}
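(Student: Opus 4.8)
The plan is to reduce to the principal case already settled in Proposition~\ref{prop:indecomposable-torus-mu} by using the anti-equivalence between tori and character lattices. First I would record that, since $T=\prod T_i$ is a principal decomposition, the projections $T\to T_i$ induce an isomorphism of $\Z[\gk]$-modules $X(T)\isom\bigoplus_i X(T_i)$ — this is precisely the statement that the character-lattice functor turns the fiber product $\prod T_i$ into a direct sum. Combining this with Proposition~\ref{prop:double-dual-torus} and applying $\Hom_\gk(-,\bar{k}^\times)$ yields
\[
T(k)=\Hom_\gk(X(T),\bar{k}^\times)\isom\Hom_\gk\Big(\bigoplus_i X(T_i),\bar{k}^\times\Big)=\prod_i\Hom_\gk(X(T_i),\bar{k}^\times)=\prod_i T_i(k),
\]
and one checks this identification is the obvious one: a point $x\in T(k)$ corresponds to the tuple $(x_i)$ of its images under the projections $T\to T_i$.

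Next I would observe that, under the inclusion $X(T_i)\subset X(T)$ fixed in the Remark (the one dual to the projection $T\to T_i$), the character $\chi_i$ regarded as a function on $T(k)$ is exactly $x\mapsto\chi_i(x_i)$, i.e.\ evaluation of the generator $\chi_i$ of $X(T_i)$ on the $T_i$-component of $x$. Hence, under the isomorphism $T(k)\isom\prod_i T_i(k)$ just described, the map $\oplus\chi_i$ becomes the product of the maps $x_i\mapsto\chi_i(x_i)$.

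Finally, each $T_i$ is principal by hypothesis, and $f_i$ is by definition the characteristic polynomial of Frobenius on $X(T_i)$, so Proposition~\ref{prop:indecomposable-torus-mu} applies verbatim to $T_i$ and $\chi_i$ and shows that $x_i\mapsto\chi_i(x_i)$ is an isomorphism $T_i(k)\isom\mu_{f_i(q)}(\bar{k}^\times)$. A finite product of isomorphisms is an isomorphism, so $\oplus\chi_i\colon T(k)\to\bigoplus_i\mu_{f_i(q)}(\bar{k}^\times)$ is an isomorphism, as claimed.

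I do not expect a genuine obstacle here: all the substance is in Proposition~\ref{prop:indecomposable-torus-mu}, and what remains is the bookkeeping of the duality. The only point that deserves explicit care is checking that ``evaluate $\chi_i$ on $T(k)$'' agrees with ``project to $T_i(k)$, then evaluate $\chi_i$'' — equivalently, that the embedding $X(T_i)\subset X(T)$ in play is dual to the projection $T\to T_i$ and not to some other map — but this is exactly the convention fixed in the Remark preceding the definition of norm tori, so it is already in force.
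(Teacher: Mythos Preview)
Your proof is correct and matches the paper's approach: the paper simply says ``This is an immediate consequence of the previous proposition,'' and you have spelled out exactly why---the product decomposition of $T$ gives $T(k)\isom\prod_i T_i(k)$, and then Proposition~\ref{prop:indecomposable-torus-mu} handles each factor. Your care about the compatibility of the embedding $X(T_i)\subset X(T)$ with the projection is appropriate but, as you note, already fixed by the paper's conventions.
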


\begin{proof}
  This is an immediate consequence of the previous proposition.
\end{proof}

We will often write $\mu(T_i)$ in place of $\mu_{f_i(q)}(\kb^\times)$. Note that $f_i(q)$, and hence $\mu(T_i)$, depend on the base field.

\begin{corollary}\label{cor:order-torus}
  Let $T$ be principally decomposable torus over $k$, and let $f(x)$ be the characteristic polynomial of Frobenius acting on $X(T)$. Then $\# T(k) = f(q)$.
\end{corollary}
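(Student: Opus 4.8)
The plan is to read this off directly from Theorem~\ref{thm:T-sum-mu-i} together with the multiplicativity of the characteristic polynomial. Fix a principal decomposition $T = \prod_i T_i$. Since the character group of a fiber product of tori is the direct sum of the character groups, $X(T) = \bigoplus_i X(T_i)$ as $\Z[\gk]$-modules, and Frobenius preserves each summand; hence, in a $\Z$-basis adapted to this decomposition, Frobenius acts block-diagonally, and so $f(x) = \prod_i f_i(x)$, where $f_i$ is the characteristic polynomial of Frobenius on $X(T_i)$. By Theorem~\ref{thm:T-sum-mu-i} we have $\#T(k) = \prod_i \#\mu_{f_i(q)}(\kb^\times)$, so it suffices to show $\#\mu_{f_i(q)}(\kb^\times) = f_i(q)$ for each $i$.

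For each $i$, $f_i(q) \equiv \pm 1 \pmod q$ (as noted after Proposition~\ref{prop:indecomposable-torus-mu}), so in particular $p \nmid f_i(q)$; thus $\mu_{f_i(q)}$ is an \'etale sheaf over $k$, and $\mu_{f_i(q)}(\kb^\times)$ is the cyclic group of all $|f_i(q)|$-th roots of unity in $\kb^\times$, of order $|f_i(q)|$. It then remains only to pin down the sign. Since $T_i$ is defined over the finite field $k$ and $X(T_i)$ is finitely generated, Frobenius acts on $X(T_i)$ through a finite quotient of $\gk$, hence with finite order; its eigenvalues are therefore roots of unity, and since $f_i \in \Z[x]$, it is a product of cyclotomic polynomials $\Phi_d$. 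As $q = \#k \geq 2$, each such factor is strictly positive: $\Phi_1(q) = q-1 > 0$, $\Phi_2(q) = q+1 > 0$, and for $d \geq 3$ the primitive $d$-th roots of unity occur in complex-conjugate pairs, so $\Phi_d(q) = \prod_\zeta (q-\zeta)$ is a product of positive factors of the form $|q-\zeta|^2$. Hence $f_i(q) > 0$, giving $\#\mu_{f_i(q)}(\kb^\times) = f_i(q)$, and therefore $\#T(k) = \prod_i f_i(q) = f(q)$.

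The whole argument is bookkeeping built on Theorem~\ref{thm:T-sum-mu-i}; the one place any care is needed is the positivity of $f(q)$, i.e.\ that the signs of the $f_i(q)$ multiply out correctly. I would handle this via the cyclotomic factorization above (alternatively, one could observe that $\#T(k)$ is a positive integer equal to $\pm f(q)$, but confirming the sign is $+$ still requires knowing $f(q) \geq 0$, so the cyclotomic argument is the cleanest route).
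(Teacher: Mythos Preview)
Your proof is correct and follows the same route as the paper, which simply records the corollary as immediate from Theorem~\ref{thm:T-sum-mu-i} (and cites Ono for the general statement). Your explicit verification that each $f_i(q)>0$ via the cyclotomic factorization is more careful than the paper, which leaves this point implicit.
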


The corollary in fact holds for general algebraic tori; see Ono~\cite[eq.~(1.2.6)]{ono1961}.

\section{The descent map}
\label{sec:the-descent-map}

Recall that $C$ is a smooth, proper, geometrically integral curve over $K$ with minimal proper model $\oc$ having totally degenerate semi-stable reduction and special fiber $C_k$. In this situation, the Jacobian $J$ of $C_k$ has a N\'eron model $\oj$ whose special fiber $J_k$ lies in a short exact sequence of group schemes
\[
0 \to T \to J_k \to \Phi \to 0,
\]
where $T$ is an algebraic torus and $\Phi$ is a finite \'etale group scheme. The scheme $\Phi$ is the \emph{component group} of $J_k$.

\subsection{Overview}
\label{sec:overview}

Starting in \S~\ref{sec:eval-divis-1}, we will assume that the normalization of every irreducible component of $C_k$ is isomorphic to $\Pro^1_k$. This implies that $\Pic C = (\Pic \Cbar)^G$; that is, any divisor linearly equivalent to its Galois conjugates is linearly equivalent to a rational divisor. In the remainder of this overview, we will operate under this assumption.

Let $\dro C$ be the group of divisors $D$ on $C$ supported away from points with singular reduction. Note that for any $L \in \Pic C$, there exists some $D \in \dro C$ which represents it by~\cite[Theorem~3.3]{gabber-liu-lorenzini2011}. Let 
\[
\tau: \dro C \to \Div C_k
\]
be the specialization map; that is, given $D \in \dro C$, let $\od$ be the Zariski closure of $D$ in $\oc$ under the canonical inclusion $C \hra \oc$. Then $\tau(D)$ is $\od \cap C_k$. (See for example~\cite[\S 2.1]{baker2008} or~\cite[ch.~10.1.3]{liu2002}.) If $C_i$ is an irreducible component of $C_k$, the intersection pairing $(\tau(D) \cdot C_i)$ is well-defined; by abuse of notation, we will also write it $(D\cdot C_i)$. 

Given $L \in \Pic C$, we wish to determine if $L$ lies in $r\Pic C$. Evidently it is necessary that $L$ lie in $\Pic^r C$. We will define a group $\pr C$ lying in a filtration
\begin{equation}\label{eq:prc-filtration}
\Pic^r C \supset \pr C \supset r\Pic C.
\end{equation}
In Proposition~\ref{prop:geometric-obstruction-divisibility} below, we show how to determine if $L \in\Pic^r C$ lies in $\pr C$. Then, in the following sections, we will show how to determine if $L \in \pr C$ lies in $r\Pic C$. The two quotients defined by the filtration may be viewed as giving, respectively, geometric and arithmetic obstructions to $r$-divisibility.

Our first step is to define a degree map
\[
\deg: \Div C_k \to \Z^v
\]
where $v$ is the number of irreducible components of $C_k$. Now letting $C_1, \dots, C_v$ be the irreducible components of $C_k$, the degree map is defined by
\[
\deg(D) = ((D \cdot C_i))
\]
where $\cdot$ denotes the intersection pairing. Then $\pr C$ is defined to be the set of divisor classes containing a divisor $D\in\dro C$ such that $\deg(\tau(D)) \in r\Z^v$. (For $D \in \dro C$, we will usually write $\deg(D)$ in place of $\deg(\tau(D))$.) Furthermore, we certainly have the filtration~\eqref{eq:prc-filtration}.

We now show how to determine if a given $L \in \Pic^r C$ lies in $\pr C$. Let $M_{fib}$ be the subgroup of $\Z^v$ generated by the vectors $v_i = ((C_i\cdot C_j))$.

\begin{proposition}\label{prop:geometric-obstruction-divisibility}
  Suppose $L \in \Pic^r C$. Let $D\in \dro C$ represent $L$. Then $L\in \pr C$ if and only if $\deg(D)$ lies in $r\Z^v + M_{fib}$.
\end{proposition}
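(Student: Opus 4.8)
The statement says: for $L \in \Pic^r C$ represented by $D \in \dro C$, we have $L \in \pr C$ iff $\deg(D) \in r\Z^v + M_{fib}$. One direction is essentially by definition: if $L \in \pr C$, then by definition there is some $D' \in \dro C$ with $L = [D']$ and $\deg(D') \in r\Z^v$, and I need to relate $\deg(D)$ to $\deg(D')$. The other direction needs to produce, from the hypothesis $\deg(D) \in r\Z^v + M_{fib}$, a new representative $D'$ of $L$ with $\deg(D') \in r\Z^v$. So the whole proof hinges on understanding how $\deg(\tau(D))$ changes when $D$ is replaced by a linearly equivalent divisor in $\dro C$. The main tool is that if $D \sim D'$ on $C$ with $D, D' \in \dro C$, then $D - D' = \divv(f)$ for some $f \in K(C)^\times$, and the specializations $\tau(D), \tau(D')$ differ by the divisor of $f$ viewed as a rational function on $\oc$, restricted to $C_k$. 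The key point is that $\divv_{\oc}(f)$ is a divisor on $\oc$ whose "horizontal part" specializes correctly and whose "vertical part" is a $\Z$-combination $\sum a_i C_i$ of the components of $C_k$. Intersecting with each $C_j$ and using $(\divv_{\oc}(f) \cdot C_j) = 0$ (since $f$ is a rational function, principal divisors have zero intersection with complete fibral curves), I get $\deg(\tau(D)) - \deg(\tau(D')) = -\sum a_i v_i \in M_{fib}$, where $v_i = ((C_i \cdot C_j))_j$.

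**Carrying it out.** First I would record the fact that for $D \in \dro C$, the closure $\od$ meets $C_k$ only in smooth points, so no component of $C_k$ is contained in $\od$, hence $\od$ is purely horizontal and $\tau(D) = \od \cap C_k$ makes sense as a divisor on $C_k$. Next, the forward direction: given $L \in \pr C$, pick $D' \in \dro C$ representing $L$ with $\deg(\tau(D')) \in r\Z^v$. Since $D$ also represents $L$, $D - D' = \divv(f)$ for some $f \in K(C)^\times$. Take the closure in $\oc$: $\divv_{\oc}(f) = \od - \od' + V$ where $V = \sum_i a_i C_i$ is vertical (here I use that $\od, \od'$ are the horizontal parts because $D, D' \in \dro C$; more precisely $\divv_{\oc}(f)$ restricted away from $C_k$ is $D - D'$, so the difference of closures plus something vertical). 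Then for each component $C_j$, intersecting with $C_j$: $0 = (\divv_{\oc}(f) \cdot C_j) = (\od \cdot C_j) - (\od' \cdot C_j) + \sum_i a_i (C_i \cdot C_j)$, which reads componentwise as $0 = \deg(\tau(D))_j - \deg(\tau(D'))_j + (\sum_i a_i v_i)_j$. Hence $\deg(\tau(D)) = \deg(\tau(D')) - \sum_i a_i v_i \in r\Z^v + M_{fib}$. For the reverse direction: suppose $\deg(\tau(D)) = r w + \sum_i a_i v_i$ for some $w \in \Z^v$, $a_i \in \Z$. I want a representative $D'$ of $L$ in $\dro C$ with $\deg(\tau(D')) \in r\Z^v$. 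The natural candidate is to move $D$ by a principal divisor on $\oc$ coming from a rational function whose vertical divisor on $\oc$ is exactly $\sum_i (-a_i) C_i$ — but not every vertical divisor is principal. This is the crux.

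**The main obstacle.** The hard part is the reverse direction, specifically realizing the correction term $\sum_i a_i v_i \in M_{fib}$ by an actual change of representative within $\dro C$. The subtlety is that $M_{fib}$ is generated by the $v_i = ((C_i \cdot C_j))_j$, and these vectors are precisely the images under $\deg \circ \tau$ of... well, they'd be $\deg$ of the vertical divisor $C_i$ — but $C_i$ is not a divisor on $C$, only on $\oc$. The resolution should be: the map $\Z^v \to \Z^v$ given by the intersection matrix $(C_i \cdot C_j)$ has image exactly $M_{fib}$ by definition, and one knows (this is classical for the special fiber of a regular model, e.g. via the fact that the total fiber is principal and the intersection matrix has one-dimensional kernel/cokernel after modding by the total fiber vector) that a vertical divisor $\sum b_i C_i$ has $(\sum b_i C_i \cdot C_j) = 0$ for all $j$ iff $\sum b_i C_i$ is a multiple of the whole fiber $C_k = \sum m_i C_i$ (with $m_i$ the multiplicities, here all $1$ by total degeneracy), and the whole fiber is principal (cut out by the uniformizer $\pi$). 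So: given the target correction $\sum a_i v_i$, I need a rational function $f$ on $C$ whose closure's vertical part is the right thing; equivalently I need to solve $(\sum_i b_i C_i \cdot C_j)_j = -\sum_i a_i v_i = -(\text{intersection matrix applied to } (a_i))$, which is solved by $b_i = -a_i$ up to adding a multiple of the all-ones vector, and then I need a rational function on $\oc$ with that vertical divisor and horizontal part disjoint from the singular points — existence of such follows because $\Pic C = (\Pic \Cbar)^G$ together with the structure of the model, or more directly because the class $L \in \pr C$ requires exhibiting a representative and the obstruction to doing so is exactly being able to solve the fibral linear system, which $\deg(\tau(D)) \in r\Z^v + M_{fib}$ guarantees. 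I would isolate this as a lemma: for any $w' \in M_{fib}$ there is $f \in K(C)^\times$ with $D + \divv(f) \in \dro C$ and $\deg(\tau(D + \divv(f))) = \deg(\tau(D)) - w'$; granting this, choosing $w' = \sum_i a_i v_i$ finishes the proof. The genuine content, and where I expect to spend the most care, is proving that lemma — controlling both the vertical divisor class and the horizontal support simultaneously, using regularity of $\oc$ and the total degeneracy hypothesis to guarantee the needed moving lemma and the principality of the full special fiber.
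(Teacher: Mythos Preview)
Your proposal is correct and follows essentially the same route as the paper. The forward direction is identical; for the reverse, the paper first subtracts $rD'$ to reduce to $\deg D \in M_{fib}$, then resolves exactly the lemma you flag---realizing a prescribed fibral correction by a principal divisor whose horizontal part avoids the nodes---by invoking the moving lemma of Gabber--Liu--Lorenzini (Prop.~6.1): take a fibral $F$ with $\deg F = \deg D$, move it on $\oc$ to a horizontal $F'$ avoiding one chosen smooth point on each component, and note that $F'|_C$ is principal since $F$ was fibral.
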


\begin{remark}\label{rmk:picr-geometric}
  Observe that the equivalent conditions of the theorem are geometric, hence we may base-extend to $K^u$ to prove the theorem.
\end{remark}

\begin{proof}
 Certainly if $L \in \pr C$, we may find a divisor $E$ with class $L$ such that $\deg(E) \in r\Z^v$. Since $D$ is linearly equivalent to $E$, if we consider $D$ and $E$ as horizontal divisors on $\oc$ via the inclusion $C \hookrightarrow \oc$, then $(D-E)$ is linearly equivalent to a fibral divisor $F$. Therefore $\deg(D) = \deg(E) + \deg(F) \in r\Z^v + M_{fib}$.

Suppose now that $\deg(D) \in r\Z^v + M_{fib}$. Then there exists a divisor $D'$ such that $\deg(D-rD') \in M_{fib}$. Replacing $D$ with $D-rD'$, we see that we wish to show that if $\deg D \in M_{fib}$, then $L \in \pr C$.

Let $F \in \Div \oc$ be a fibral divisor for which $\deg F = \deg D$. We wish to replace $F$ with a linearly equivalent horizontal divisor. On each irreducible component $C_i$ of $C_k$, choose a smooth closed point $x_i \in C_i(\kb)$. By~\cite[Prop.~6.1]{gabber-liu-lorenzini2011}, there is a divisor $F'$ linearly equivalent to $F$ which avoids each of the $x_i$; this latter condition forces $F'$ to be horizontal. Observe that $\deg (F') = \deg (F)$. Furthermore, if we consider $F'$ as a divisor on $C$ by restricting to the generic fiber of $\oc$, one sees that $F'$ must be principal. Thus $D$ is linearly equivalent to $D - F'$. But $\deg(D-F') = \deg(D-F) = 0$, from which the claim follows.
\end{proof}

Therefore to determine if $L\in \Pic C$ is divisible by $r$, we may assume that $L\in \pr C$. In this section, we will associate to every character of $T$ a homomorphism $\gamma$ from $\displaystyle\frac{\pr C}{r\Pic C}$ to a finite cyclic group, so that the intersection of the kernels of the $\gamma$s is trivial. The idea behind the homomorphism is as follows.

Given $L \in \pr C$, choose $D\in \dr C$ representing it. We may translate $D$ to $\drz C$ via an $r$-divisible divisor, map the resulting divisor to some $x\in T(k)$, then evaluate characters of $T$ on $x$ to determine if it lies in $rT(k)$. If it does, then $L$ was $r$-divisible. But even if $x\notin rT(k)$, $L$ may still be $r$-divisible in $J(k)$! Under our hypotheses, $\Phi$ is a constant group scheme (see Prop.~\ref{prop:bosch-liu-phi-constant}). We have a short exact sequence
\[
\Phi[r] \to \frac{T(k)}{rT(k)} \to \frac{J(k)}{rJ(k)}.
\]
In order for $L$ to be $r$-divisible, we must test if the resulting $x\in T(k)$ lies in $rT(k) + \im (\Phi[r])$. As we will see, determining $\im (\Phi[r])$ will have the pleasant consequence of telling us the prime-to-$p$ torsion in $J(K)$.

\paragraph{Theta characteristics.}
\label{sec:theta-char}

We will be particularly interested in the case where $L$ is the canonical class and $r=2$; that is, we wish to know when $C$ has a rational theta characteristic. When $\car K = 2$, Mumford~\cite{mumford1971} showed that $C$ \emph{always} has a rational theta characteristic, regardless of the reduction type of the minimal proper regular model for $C$. When $p\geq 3$, work of Atiyah~\cite{atiyah1971} shows that $C$ has a rational theta characteristic over $\ku$. In particular, under our hypotheses on the reduction of $C$, we have that the canonical class must lie in $\pt C$ in all cases except possibly when $\car K = 0$ and $\car k = 2$.

\subsection{One-cycles on $\Gamma$}
\label{sec:one-cycles-gamma}

Let $\Gamma$ be the dual graph of $C_k$; that is, the graph whose vertices correspond to the irreducible components of $C_k$, and whose edges correspond to the nodes of $C_k$. The graph $\Gamma$ comes equipped with a natural $\gk$-action.

Let $H_1(\Gamma,\Z)$ be the group of closed, oriented 1-cycles on $\Gamma$; it is a $\gk$-module. Define a group $F_1(\Gamma)$ as the set of pairs $(\gamma, (t_i))$ where $\gamma \in H_1(\Gamma, \Z)$ and $(t_i)$ is a collection of functions, each $t_i$ being a function on the irreducible component $C_i \subset C_k$ such that the zeroes and poles of $t_i$ occur only at nodes. (If any $C_i$ is itself a nodal curve, construct $t_i$ on its normalization.) The group structure is 
\[
(\gamma_1, (s_i)) \cdot (\gamma_2, (t_i)) = (\gamma_1 + \gamma_2, (s_i\cdot t_i)).
\]
The group $F_1(\Gamma)$ comes equipped with a natural $\gk$-action; namely, 
\[
\Fr(\gamma, (t_i)) = (\Fr \gamma, (t_i^\Fr))
\]
where $t_i^\Fr(x) = \Fr t_i(\Fr^{-1} x)$. There is a canonical Galois-equivariant projection
\[
F_1(\Gamma) \to H_1(\Gamma, \Z).
\]

We define a subset $F_1(\Gamma)(1) \subset F_1(\Gamma)$ as follows. Let $\gamma \in H_1(\Gamma,\Z)$ be an oriented 1-cycle. On $C_{\bar{k}}$, $\gamma$ corresponds to a chain of components $C_0, \dots, C_n$ connected by nodes $x_j$, so that $x_j \in C_j \cap C_{j+1}$. The orientation is given by the order of the components as given; observe that there may be some repetition in the $C_j$ and/or $x_j$. Then $F_1(\Gamma)(1)$ consists of pairs $(\gamma, (t_i))$, where $t_i$ is given as follows:
\begin{enumerate}
    \item if $C_i$ does not appear in $\gamma$, then $t_i=1$;
    \item if $C_i$ appears with multiplicity one, then $t_i$ is a degree 1 local parameter on $C_i$ such that $t_i(x_{i-1}) = 0$ and $t_i(x_i)=\infty$; and
    \item if $C_i$ appears with higher multiplicity, compute a degree 1 local parameter as above for each occurrence of $C_i$, then let $t_i$ be their product.
\end{enumerate}

\subsection{Evaluation of divisors on 1-cycles}
\label{sec:eval-divis-1}

In this section, we will assume that the normalization of every irreducible component of $C_k$ is isomorphic to $\Pro^1_k$.


Recall from \S~\ref{sec:overview} that $\dro C$ is the group of divisors $D$ on $C$ supported away from points with singular reduction, and that
\[
\tau: \dro C \to \Div C_k
\]
is the specialization map.

Let $\gamma \in \Hp^1(\Gamma,\Z)$ be an oriented 1-cycle, and let $\tilg$ be an element of $F_1(\Gamma)(1)$ lying over $\gamma$. We now define a homomorphism
\[
\tilg:\dro C \to \bar{k}^\times
\]
by
\[
\tilg(D) := \prod t_i(\tau(D) \cap C_i)
\]
where the product is over all $t_i$ appearing in $\tilg$. By abuse of notation, we will often write $D\cap C_i$ for $\tau(D) \cap C_i$, or more concisely just $D_i$. 

Recall the degree map from \S~\ref{sec:overview}, and that $\dr C \subset \dro C$ is the subgroup of divisors $D$ with $\deg D \in r\Z^v$. 
Let $\pr C$ be the set of divisor classes $L$ such that $L$ contains some divisor in $\dr C$.

\begin{lemma}\label{lemma:H1-isomorphism-character-group}
  For $\gamma \in H_1(\Gamma, \Z)$, choose some $\tilg$ lying over it and consider the homomorphism $\tilg:\drz C_{\ku}\to\bar{k}^\times$. Then the latter homomorphism factors through $\prz C_{\bar{k}}$. Via the identification of $\prz C_{\bar{k}}$ with $T(\bar{k})$, the map $\gamma \mapsto \tilg$ induces a well-defined Galois-equivariant isomorphism
  \[
  H_1(\Gamma, \Z) \to X(T)
  \]
  where $X(T)$ is the character group of the torus $T$.
\end{lemma}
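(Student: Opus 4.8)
The plan is to prove the three assertions of Lemma~\ref{lemma:H1-isomorphism-character-group} in turn: first that $\tilg : \drz C_{\ku} \to \bar k^\times$ is well-defined and factors through the Picard group, then that the induced map on $\prz C_{\bar k} \isom T(\bar k)$ is independent of the lift $\tilg$, and finally that $\gamma \mapsto \tilg$ defines a Galois-equivariant isomorphism $H_1(\Gamma,\Z) \isom X(T)$. Throughout I would work over $\ku$, i.e.\ over $\bar k$ on the special fiber, since the statement on the right-hand side (the identification with $X(T)$) is geometric and Galois-equivariance can be checked at the end by tracking the actions already defined on $F_1(\Gamma)$, on $H_1(\Gamma,\Z)$, and on $X(T)$.

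\emph{Well-definedness and factoring through $\Pic$.} Given $D \in \drz C_{\ku}$, the quantity $\tilg(D) = \prod_i t_i(\tau(D)\cap C_i)$ makes sense because $D$ is supported away from singular points, so $\tau(D)$ is a sum of smooth points of $C_{\bar k}$, none of which is a node; since each $t_i$ has zeros and poles only at nodes, $t_i(\tau(D)\cap C_i) \in \bar k^\times$. The map is clearly a homomorphism on $\drz C_{\ku}$. To see that it factors through $\prz C_{\bar k}$, I must show $\tilg(\Div f) = 1$ for every $f \in \ku(C)^\times$ whose divisor lies in $\drz C_{\ku}$. The key computation is the \emph{Weil reciprocity} identity on each $\Pro^1$: for rational functions $u,w$ on $\Pro^1_{\bar k}$ with disjoint supports, $\prod u(\div w) = \prod w(\div u)$ up to sign, and more precisely $\prod_{x} u(x)^{\ord_x w} = \pm\prod_x w(x)^{\ord_x u}$. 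The specialization $\tau(\div f)$ restricted to $C_i$ is, up to the combinatorial bookkeeping of how $f$ meets the components, the divisor of the ``restriction'' of $f$ to $C_i$ adjusted by contributions at the nodes where $f$ has a zero or pole along neighboring components; the telescoping along the chain $C_0,\dots,C_n$ defining $\gamma$ — precisely because $t_i(x_{i-1})=0$ and $t_i(x_i)=\infty$ are set up so consecutive local parameters ``cancel'' at the shared node — makes the product over $i$ collapse. This is the heart of the argument and I expect it to be the main obstacle: one has to be careful about (a) the sign ambiguity in Weil reciprocity (it should cancel globally because $\gamma$ is a \emph{closed} cycle, so every node appears with matching multiplicities), (b) components appearing in $\gamma$ with multiplicity $>1$, handled by taking products of local parameters as in the definition of $F_1(\Gamma)(1)$, and (c) the interaction between the horizontal divisor $\Div f$ on $\oc$ and the fibral components, i.e.\ ensuring $\deg(\Div f) = 0$ forces the bookkeeping at the nodes to balance.

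\emph{Independence of $\tilg$, and the isomorphism with $X(T)$.} Two lifts $\tilg, \tilg'$ of the same $\gamma$ differ by an element of the kernel of $F_1(\Gamma)(1) \to H_1(\Gamma,\Z)$ restricted to the fiber over $\gamma$; concretely they differ by the choice of degree-$1$ local parameters $t_i$ versus $t_i'$, and $t_i/t_i'$ is a scalar times $(t_i - \text{shift})$ — on $\Pro^1$, two degree-$1$ functions with the same zero and the same pole differ by a nonzero scalar. Evaluating such a scalar ratio on a divisor $D$ of degree $0$ on $C_i$ (which is what $\tau(D)\cap C_i$ is, once we are on $\prz C_{\bar k}$ — here is where the degree-$0$ condition enters again) gives $1$. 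So $\tilg$ descends to a well-defined homomorphism $\prz C_{\bar k} \to \bar k^\times$ depending only on $\gamma$. Now I invoke the standard identification $\prz C_{\bar k} \isom T(\bar k)$ (the toric part of $J_k$, via the combinatorial / tropical description of degenerate semistable Jacobians: $T(\bar k) = \Hom(H_1(\Gamma,\Z), \bar k^\times)$, equivalently $X(T) = H_1(\Gamma,\Z)$ as abstract groups, see Raynaud / Bosch–Lütkebohmert). Under this identification, evaluation of the character associated to $\gamma$ on a point of $T(\bar k)$ is exactly the recipe $\tilg$ computes, as one checks by unwinding the definition of the monodromy pairing / the edge-length pairing on the dual graph. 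Hence $\gamma \mapsto \tilg$ is, after the identification, a bijection $H_1(\Gamma,\Z) \to X(T)$; it is a group homomorphism by the multiplicativity $(\gamma_1+\gamma_2) \mapsto$ product of the $t$-functions (the group law on $F_1(\Gamma)$ is componentwise multiplication of the $t_i$), and it is an isomorphism because both sides are free $\Z$-modules of rank $g$ (the first Betti number of $\Gamma$, which equals $\dim T$) and the map is surjective — surjectivity because a basis of $H_1(\Gamma,\Z)$ given by independent cycles maps to characters that separate points of $T(\bar k)$, which by Proposition~\ref{prop:double-dual-torus} forces them to generate $X(T)$. Finally, Galois-equivariance: $\Fr$ acts on $\gamma$ by permuting components/nodes, on $\tilg$ by $(\gamma,(t_i)) \mapsto (\Fr\gamma,(t_i^\Fr))$, and on $X(T)$ compatibly; a direct check that $\tilg[\Fr\gamma](\Fr D) = \Fr(\tilg[\gamma](D))$ — which is immediate from $t_i^\Fr(x) = \Fr t_i(\Fr^{-1}x)$ — gives equivariance of the map, completing the proof.
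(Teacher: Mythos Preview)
Your plan is sound and would go through, but it takes a different route from the paper. The paper does not prove directly that $\tilg$ kills principal divisors via Weil reciprocity. Instead it \emph{recalls} the classical formula for the character $\chi_\gamma \in X(T)$ attached to a cycle $\gamma$: given $L \in \drz C$ with $L\cap C_i = \sum_j e_{ij} y_{ij}$, one chooses on each $C_i$ a function $f_i$ with $\div f_i = L\cap C_i$ and regular at the adjacent nodes, and sets $\chi_\gamma([L]) = \prod_i f_{i+1}(x_i)/f_i(x_i)$. The proof then simply picks the specific $f_i = \prod_j (t_i - t_i(y_{ij}))^{e_{ij}}$, computes $f_i(x_{i-1}) = \prod_j t_i(y_{ij})^{e_{ij}}$ and $f_i(x_i)=1$, and reads off $\chi_\gamma([L]) = \tilg(L)$. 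Factoring through $\prz$ and the isomorphism with $X(T)$ then come for free from the known properties of $\chi_\gamma$.

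Your Weil-reciprocity argument is a legitimate alternative and arguably more self-contained, but two of your anticipated obstacles are not real. First, once you normalize $f$ by a power of $\pi$ so that $\div_{\oc} f$ has no fibral part (possible exactly because $\deg(\div f)=0$), the restriction $f|_{C_i}$ is a unit at every node, so $\div(f|_{C_i}) = \tau(\div f)\cap C_i$ on the nose, with no nodal correction terms. Second, the sign in Weil reciprocity is $(-1)^{\sum_P \ord_P t_i \cdot \ord_P(f|_{C_i})}$, and since $t_i$ is supported on nodes while $f|_{C_i}$ is a unit there, each summand vanishes and the sign is $+1$ component by component; no global cancellation along the closed cycle is needed. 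What your approach leaves slightly vague is the final identification: having produced a homomorphism $H_1(\Gamma,\Z)\to X(T)$, you still need to check it agrees with the standard one, and ``unwinding the monodromy pairing'' is exactly the computation the paper does explicitly with the $f_i$. So the two approaches converge at that last step.
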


\begin{proof}
  First, observe that if $\tilg=(\gamma, (t_i))$, $\tilg'=(\gamma, (t'_i))$ are two lifts of $\gamma$ to $\fig$, that $t'_i = \alpha_i t_i$ for some $\alpha_i \in \bar{k}^\times$. For $L\in\drz C$, since $(L\cdot C_i) = 0$, we have $t_i(L\cap C_i) = t'_i(L\cap C_i)$. Thus the homomorphism $\drz C_{\ku} \to \bar{k}^\times$ is independent of the choice of $\tilg$.
  
  Now base-extend to $\ku$. We recall the definition of the character $\chi_\gamma$ associated to $\gamma$. Suppose $\gamma$ consists of the components $C_0, \dots, C_{n-1}$, where $C_i$ is connected to $C_{i+1}$ via a node $x_i$ (viewing subscripts modulo $n$). The orientation of $\gamma$ is given by ordering the components with increasing subscripts. Choose $\tilg \in F_1(\Gamma)(1)$ lying over $\gamma$, and let $t_i$ be the corresponding functions. Given $L \in \drz C$ supported away from the nodes, let $L \cap C_i = \sum_j e_{ij} y_{ij}$, where $e_{ij} \in \Z$ and $y_{ij} \in C_i(\bar{k})$. Observe that $\sum_j e_{ij} = 0$ for every $i$. Let 
  \[
  f_i = \prod_j (t_i - t_i(y_{ij}))^{e_{ij}}
  \]
  be a function on $C_i$; we see that $f$ is regular at $x_{i-1}$ and $x_i$. Also,
  \begin{align*}
    f_i(x_{i-1}) &= \prod_j t_i(y_{ij})^{e_{ij}} \\
    f_i(x_i) &= 1
  \end{align*}
  where the first equality uses the fact that $\sum_j e_{ij} = 0$. Then 
  \begin{align*}
    \chi_{\gamma}([L]) &= \prod_i \frac{f_{i+1}(x_i)}{f_i(x_i)} \\
    &= \prod_{ij} t_i(y_{ij})^{e_{ij}} \\
    &= \tilg(L).
  \end{align*}
  The lemma follows.
\end{proof}

Suppose $T=\prod T_i$ is a principal decomposition of $T$, and let $\chi$ be a generator for $X(T_i)$. Suppose that $\gamma \in H_1(\Gamma,\Z)$ corresponds, via Lemma~\ref{lemma:H1-isomorphism-character-group}, to $\chi$. We define a subset $\fim \subset \fig$ as follows. Let $\gamma$ consist of components $C_0, C_1, \dots$ such that $C_i$ and $C_{i+1}$ are connected by the node $x_i$. Since $\Pro^1_k(k)$ has at least 3 elements, we can always find a point $b_i \in C_i(k)$ such that $b_i \neq x_{i-1}, x_{i}$. (Of course, $b_i$ might itself be a node.) Choose $b_i$ for each $C_i$ in the list of components in $\gamma$; note that if a component appears more than once, then the corresponding base points $b_i$ may be different. Then $\fim$ consists of the set of $\tilg = (\gamma, (t_i)) \in \fig$ such that $t_i(b_i) \in \mu(T_i)$. (If $C_i$ appears with multiplicity higher than 1, then this condition must hold for each local parameter which is a factor of $t_i$.) If $T$ is a split torus and no $C_i$ appears with higher multiplicity, this condition is equivalent to $t_i \in k(C_i)^\times$ for all $i$.

\begin{lemma}\label{lemma:tilg-indep-base-pt}
  Let $T=\prod T_i$ be a principal decomposition, and let $\gamma \in H_1(\Gamma,\Z)$ be such that $\chi_\gamma$ is a generator for $X(T_i)$. Suppose that $\tilg = (\gamma, (t_i)), \tilg' = (\gamma, (t'_i)) \in \fim$ lie over $\gamma$. Then there exist $\alpha_i \in \mu(T_i)$ such that $t'_i = \alpha_i t_i$ for every $i$.
\end{lemma}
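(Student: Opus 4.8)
The plan is to exploit the fact that both $\tilg$ and $\tilg'$ lie over the same cycle $\gamma$, so that the underlying functions agree up to constants, and then to pin down those constants using the definition of $\fim$ together with the principality of $T_i$. First I would invoke the opening observation of the proof of Lemma~\ref{lemma:H1-isomorphism-character-group}: since $\tilg = (\gamma,(t_i))$ and $\tilg' = (\gamma,(t'_i))$ are two lifts of the \emph{same} cycle $\gamma$ to $\fig$, and each $t_i$ is constructed as a product of degree-one local parameters on $C_i$ with prescribed zero and pole (at the nodes $x_{i-1}$ and $x_i$, with matching multiplicities), we have $t'_i = \alpha_i t_i$ for some $\alpha_i \in \bar{k}^\times$. (Two local parameters on $\Pro^1$ vanishing at one given point and with a pole at another given point differ by a scalar; multiplying over the occurrences of $C_i$ in $\gamma$ gives a single scalar $\alpha_i$.) So the only content of the lemma is that $\alpha_i \in \mu(T_i)$.

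To get $\alpha_i \in \mu(T_i)$, I would use the membership condition defining $\fim$: we chose a base point $b_i \in C_i(k)$ distinct from $x_{i-1}, x_i$, and $\fim$ requires $t_i(b_i) \in \mu(T_i)$ and $t'_i(b_i) \in \mu(T_i)$. Since $\mu(T_i) = \mu_{f_i(q)}(\bar k^\times)$ is a subgroup of $\bar k^\times$, the quotient $\alpha_i = t'_i(b_i)/t_i(b_i)$ — evaluating the relation $t'_i = \alpha_i t_i$ at the point $b_i$, where both sides are finite and nonzero because $b_i \neq x_{i-1}, x_i$ — lies in $\mu(T_i)$. This is the whole argument in the case where $C_i$ occurs with multiplicity one in $\gamma$. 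In the higher-multiplicity case, $t_i$ is by definition a product of degree-one local parameters $t_i^{(1)}, \dots, t_i^{(k)}$, one for each occurrence, and likewise for $t'_i$; the defining condition of $\fim$ is imposed \emph{factor by factor}, so $t_i^{(j)\prime} = \alpha_i^{(j)} t_i^{(j)}$ with each $\alpha_i^{(j)} \in \mu(T_i)$ by the same evaluation-at-$b_i$ argument (using that $b_i$ avoids the relevant nodes), and then $\alpha_i = \prod_j \alpha_i^{(j)} \in \mu(T_i)$ since $\mu(T_i)$ is a group.

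The only subtlety — and the step I would be most careful about — is the possibility that $b_i$ is itself a node of $C_k$, which is explicitly allowed in the construction of $\fim$. In that case $b_i$ lies on $C_i$ and on some other component $C_j$, but the functions $t_i$ and $t'_i$ are defined on (the normalization of) $C_i$ and their zeros and poles sit at $x_{i-1}, x_i$; since we chose $b_i \neq x_{i-1}, x_i$, the functions $t_i, t'_i$ take well-defined nonzero finite values at the point of $C_i$ lying over $b_i$, and the evaluation argument goes through verbatim. I do not expect any essential obstacle beyond bookkeeping: the lemma is really just the assertion that the defining constraint of $\fim$ is preserved under the ambiguity already identified in Lemma~\ref{lemma:H1-isomorphism-character-group}, and the proof is a one- or two-line evaluation once the multiplicity conventions are unwound.
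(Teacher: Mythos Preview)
Your proof is correct for the lemma as literally stated: once both lifts are tested against the \emph{same} base points $b_i$, the evaluation $\alpha_i = t'_i(b_i)/t_i(b_i)$ immediately lands in $\mu(T_i)$, and your treatment of the higher-multiplicity case is fine.

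The paper, however, takes a somewhat different and stronger route. It allows $\tilg$ and $\tilg'$ to satisfy the $\fim$ condition relative to \emph{different} base points $b_i$ and $b'_i$, and then shows that
\[
\frac{t_i(b'_i)}{t_i(b_i)} \;=\; \tilg\big((b'_i)-(b_i)\big)
\]
lies in $\mu(T_i)$ by recognizing the right-hand side as the value of the character $\chi_\gamma$ on a degree-zero divisor, via Lemma~\ref{lemma:H1-isomorphism-character-group} and Theorem~\ref{thm:T-sum-mu-i}. This extra character-theoretic step is precisely what yields the consequence stated immediately after the lemma --- that $\fim$ does not depend on the choice of base points --- which your argument does not establish on its own. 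So your approach is more elementary and suffices for the lemma proper, while the paper's approach buys base-point independence of $\fim$ in the same stroke.
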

One consequence of the lemma is that $\fim$ does not depend on the choice of base-points $b_i$.

\begin{proof}
  It suffices to consider the case where $C_i$ appears in $\gamma$ with multiplicity 1. Let $b_i, b'_i \in C_i(k)$ be the base-points corresponding to $t_i, t'_i$, respectively. Then 
\[
t_i(b'_i) = t_i(b_i) \frac{t_i(b'_i)}{t_i(b_i)} = t_i(b_i) \tilg((b'_i) - (b_i)),
\]
where $(b'_i) - (b_i)$ means the natural divisor on $C_k$. But this divisor lies in the reduction of $\drz C$, hence by Lemma~\ref{lemma:H1-isomorphism-character-group} and Theorem~\ref{thm:T-sum-mu-i}, $\tilg((b'_i) - (b_i)) \in \mu(T_j)$. The claim follows from setting $\alpha_i = \tilg((b'_i)-(b_i))$.
\end{proof}

\subsection{Torsion and descent}
\label{sec:torsion-descent}

Given a principal decomposition $T=\prod T_i$, and $\chi$ a generator for $X(T_i)$, let $\gamma \in H_1(\Gamma, \Z)$ be a one-cycle such that, via the isomorphism of Lemma~\ref{lemma:H1-isomorphism-character-group}, $\chi_\gamma = \chi$. Write $r\mu(T_i)$ for the group of all $\alpha^r$, $\alpha \in \mu(T_i)$. We define a map
\[
\dr C \to \frac{\mu(T_i)}{r\mu(T_i)}
\]
which, by abuse of notation, we also denote $\gamma$. This map is defined by
\[
\gamma(D) = \tilg(D) \pmod{r\mu(T_i)}
\]
where $\tilg$ is any element of $\fim$ lying over $\gamma$.

\begin{lemma}
  $\gamma$ is a well-defined homomorphism.
\end{lemma}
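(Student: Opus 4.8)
The statement packages together three claims: that $\tilg(D)$ genuinely lands in $\mu(T_i)$, so that $\gamma(D):=\tilg(D)\bmod r\mu(T_i)$ makes sense as an element of $\mu(T_i)/r\mu(T_i)$; that this class is independent of the chosen lift $\tilg\in\fim$ over $\gamma$; and that $D\mapsto\gamma(D)$ is additive. Two of the three are quick. Additivity holds because $\tilg\colon\dro C\to\bar k^\times$, $D\mapsto\prod_i t_i(\tau(D)\cap C_i)$, is already a group homomorphism — it is the composite of the additive specialization $\tau$ with the multiplicative map $E\mapsto\prod_i t_i(E\cap C_i)$ — so its restriction to the subgroup $\dr C$, followed by $\mu(T_i)\twoheadrightarrow\mu(T_i)/r\mu(T_i)$, is a homomorphism provided the image is contained in $\mu(T_i)$. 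Independence of the lift follows from Lemma~\ref{lemma:tilg-indep-base-pt}: if $\tilg=(\gamma,(t_i))$ and $\tilg'=(\gamma,(t_i'))$ both lie in $\fim$ over $\gamma$, then $t_i'=\alpha_i t_i$ with $\alpha_i\in\mu(T_i)$, and since replacing $t_i$ by $\alpha_i t_i$ scales $t_i(\tau(D)\cap C_i)$ by $\alpha_i^{(D\cdot C_i)}$, one gets $\tilg'(D)=\bigl(\prod_i\alpha_i^{(D\cdot C_i)}\bigr)\tilg(D)$; as $r\mid(D\cdot C_i)$ for $D\in\dr C$, each factor $\alpha_i^{(D\cdot C_i)}$ lies in $r\mu(T_i)$, hence so does their product, and $\tilg'(D)\equiv\tilg(D)\pmod{r\mu(T_i)}$.

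The one substantive point is that $\tilg(D)\in\mu(T_i)$ for $D\in\dr C$, and the plan is to translate $D$ into $\drz C$, where Lemma~\ref{lemma:H1-isomorphism-character-group} identifies $\tilg$ with a character of $T$. By Lemma~\ref{lemma:tilg-indep-base-pt} the set $\fim$ does not depend on the base points, so I take each $b_i$ to be a \emph{smooth} $k$-rational point of $C_k$ — here the standing hypothesis that every component's normalization is $\Pro^1_k$ is used. Regularity of $\oc$ at $b_i$ (equivalently, smoothness of $\oc\to\Spec\ok$ there) lets one lift $b_i$ to a point $P_i\in C(K)$ whose closure in $\oc$ meets $C_k$ transversally at $b_i$ and nowhere else, so $(P_i\cdot C_j)=\delta_{ij}$. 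Given $D\in\dr C$, set $R:=\sum_i(D\cdot C_i)\,P_i$: a $K$-rational divisor in $\dro C$ with $(R\cdot C_j)=(D\cdot C_j)$ for all $j$, hence $D-R\in\drz C$ (and incidentally $[R]=r\bigl[\sum_i\tfrac1r(D\cdot C_i)P_i\bigr]\in r\Pic C$). Now $\tilg(D)=\tilg(D-R)\,\tilg(R)$, and both factors lie in $\mu(T_i)$: since $D-R$ is defined over $K$ its specialization is a $k$-rational divisor, so its class in $\prz C_{\bar k}=T(\bar k)$ is $\gk$-fixed, i.e.\ lies in $T(k)$, whence $\tilg(D-R)=\ghat([D-R])\in\mu(T_i)$ by Lemma~\ref{lemma:H1-isomorphism-character-group} and Theorem~\ref{thm:T-sum-mu-i} (as $\ghat$ is a $\Z[\gk]$-generator of $X(T_i)$); and $\tilg(R)=\prod_i t_i(b_i)^{(D\cdot C_i)}$, which lies in $\mu(T_i)$ because $t_i(b_i)\in\mu(T_i)$ by the defining property of $\fim$.

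The main obstacle is the construction of $R$ in the previous paragraph: one needs a $K$-rational divisor, supported over the smooth locus of $C_k$, of prescribed multidegree (and $r$-divisible in $\Pic C$). This is exactly the point at which the running assumption that all components have normalization $\Pro^1_k$ — and the attendant supply of smooth $k$-points, equivalently index $1$ — is indispensable; once $R$ is available, the rest is bookkeeping inside the finite group $\mu(T_i)/r\mu(T_i)$ using the two earlier lemmas and Theorem~\ref{thm:T-sum-mu-i}. One minor case to keep in mind is that of a component occurring in $\gamma$ with multiplicity greater than one, where $t_i$ is a product of degree-one local parameters: the computation of $\tilg(R)$ and the rescaling estimate in the independence argument both go through factor by factor, so nothing changes.
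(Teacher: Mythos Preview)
Your proof is correct and follows the same line as the paper's, but with one part substantially fleshed out. The paper's argument for the independence-of-lift and the homomorphism claims is identical to yours (invoke Lemma~\ref{lemma:tilg-indep-base-pt} and use $r\mid(D\cdot C_i)$; additivity is immediate). For the claim $\tilg(D)\in\mu(T_i)$, the paper simply writes ``Lemma~\ref{lemma:tilg-indep-base-pt} implies that $\tilg(D)\in\mu(T)$ for any $\tilg\in\fim$, $D\in\dr C$,'' leaving the reader to unpack the mechanism from that lemma's proof. Your translation $R=\sum_i(D\cdot C_i)P_i$ by lifts of smooth base points, splitting $\tilg(D)=\tilg(D-R)\cdot\tilg(R)$ with $D-R\in\drz C$, is exactly the unpacking the paper suppresses: the factor $\tilg(D-R)$ lands in $\mu(T_i)$ via Lemma~\ref{lemma:H1-isomorphism-character-group} and Theorem~\ref{thm:T-sum-mu-i} (this is the computation inside the proof of Lemma~\ref{lemma:tilg-indep-base-pt}), and $\tilg(R)\in\mu(T_i)$ by the normalization $t_i(b_i)\in\mu(T_i)$ defining $\fim$. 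So the approaches coincide; yours just makes the reduction to $\drz C$ explicit rather than gesturing at the earlier lemma.
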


\begin{proof}
  For convenience, we assume that $T$ is itself principal, so that $T=T_i$. We need to show that the image of the map $\gamma$ lies in $\mu(T)/r\mu(T)$, and that the map does not depend on the choice of $\tilg$. Lemma~\ref{lemma:tilg-indep-base-pt} implies that $\tilg(D) \in \mu(T)$ for any $\tilg \in \fim, D\in \dr C$. Also by that lemma, choosing a different $\tilg$ is the same as replacing the $t_i$ with $\alpha_i t_i$, where $\alpha_i \in \mu(T)$. But since $D_i$ has degree divisible by $r$, $(\alpha_i t_i)(D_i)$ differs from $t_i(D_i)$ by a power of $\alpha_i^r$, hence an element of $r\mu(T)$.

  Finally, the fact that $\gamma$ is a homomorphism is clear from the definition.
\end{proof}

\begin{lemma}\label{lemma:principal-divr-same}
  Let $f, g \in \Kb(C)^\times$, and suppose $\dv f, \dv g \in \dro C$. If 
  \[
  \deg(\dv f) = \deg(\dv g),
  \]
  then for any $\tilg\in \fig$
  \[
  \tilg(\dv f) = \tilg (\dv g).
  \]
\end{lemma}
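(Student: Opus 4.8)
The plan is to collapse $f$ and $g$ into a single rational function and then recognize $\tilg$ of its divisor as a specialization that is forced to be trivial. First, since $\tilg\colon\dro C\to\kb^{\times}$ is a group homomorphism and $\deg$ is additive, putting $h=f/g$ reduces the claim to the following: if $h\in\Kb(C)^{\times}$ has $\dv h\in\dro C$ and $\deg(\dv h)=0$, then $\tilg(\dv h)=1$. Indeed $\dv(f/g)=\dv f-\dv g\in\dro C$ and $\deg(\dv(f/g))=\deg(\dv f)-\deg(\dv g)=0$, so in fact $\dv h\in\drz C$. By base change we may assume $h\in\ku(C)^{\times}$; allowing $h$ to be defined only over a ramified extension of $\ku$ is harmless, since such an extension merely inserts chains of $\Pro^{1}$'s at the nodes of the model, along which the function produced below will be constant.

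Second, I would pass to the minimal regular model $\oc$ over $\ok^{u}$, whose special fiber is $C_{\bar k}$, with every node having local equation $xy=\pi$ and $C_{\bar k}=\dv_{\oc}(\pi)$ reduced, and analyze $\dv_{\oc}(h)$. Write $\dv_{\oc}(h)=\od+V$, where $\od$ is the Zariski closure of $\dv_{C}h$ (a horizontal divisor, disjoint from the nodes because $\dv_{C}h\in\dro C$) and $V=\sum_{i}n_{i}C_{i}$ is fibral. Since $\of_{\oc}(\dv_{\oc}h)\cong\of_{\oc}$ we have $(\dv_{\oc}h\cdot C_{j})=0$ for every $j$, and $(\od\cdot C_{j})=\deg(\dv_{C}h)_{j}=0$ by hypothesis, hence $(V\cdot C_{j})=0$ for all $j$. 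Because $C_{\bar k}$ is reduced and connected, the kernel of its intersection matrix is $\Z\cdot[C_{\bar k}]$ (cf.\ \cite[\S9.1]{liu2002}), so $V=nC_{\bar k}=n\dv_{\oc}(\pi)$, and therefore $w:=h\pi^{-n}$ has purely horizontal divisor $\od$. Consequently $\dv_{C}w=\dv_{C}h$, $w$ is a unit at every node of $\oc$, and $w$ restricts to a nonzero rational function $\phi_{i}$ on the normalization $\tilde C_{i}\cong\Pro^{1}_{\bar k}$ of each component, with $\dv_{\tilde C_{i}}(\phi_{i})=\od\cap C_{i}=\tau(\dv_{C}h)\cap C_{i}$.

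The heart of the matter is that $\tau(\dv_{C}h)$ is a \emph{principal} divisor on the nodal curve $C_{\bar k}$. To see this I would check that, for a node $\bar x$ lying on $C_{i}$, the value $\phi_{i}(\bar x)$ equals the residue of the unit $w$ at the closed point $\bar x\in\oc$, hence is independent of $i$ and of the branch; thus the $\phi_{i}$ glue to a rational function $\bar w$ on $C_{\bar k}$, regular and nonvanishing at the nodes, with $\dv_{C_{\bar k}}(\bar w)=\tau(\dv_{C}h)$. Then $[\tau(\dv_{C}h)]=0$ in $\prz C_{\bar k}$, and by the identity $\tilg(L)=\ghat([\tau L])$ established in the proof of Lemma~\ref{lemma:H1-isomorphism-character-group} (valid for $L\in\drz C$ supported away from the nodes, here $L=\dv_{C}h$) we conclude $\tilg(\dv_{C}h)=\ghat(0)=1$. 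One can also finish by hand: Weil reciprocity on each $\tilde C_{i}\cong\Pro^{1}$, whose two functions $t_{i}$ and $\phi_{i}$ have disjoint divisors (the former at the nodes, the latter away from them), gives $\tilg(\dv_{C}h)=\prod_{i}\phi_{i}(\dv_{\tilde C_{i}}t_{i})=\prod_{\bar x}\bar w(\bar x)^{m_{\bar x}}$, and $m_{\bar x}=0$ for every node because $\tilg\in\fig$ records a closed $1$-cycle, so each traversal of $\bar x$ by $\gamma$ contributes a simple pole on the incoming component and a simple zero on the outgoing one.

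I expect the main obstacle to be the gluing step — verifying that $\phi_{i}$ and $\phi_{j}$ take the same value at a node shared by $C_{i}$ and $C_{j}$. This rests on $w$ being a genuine unit there, so that it has a well-defined residue in $\kb^{\times}$, and on restriction commuting with evaluation; the bookkeeping becomes delicate when a component self-intersects or appears in $\gamma$ with higher multiplicity. A secondary point needing care is the claim that the fibral part $V$ is an \emph{integral} multiple of the reduced fiber, which uses precisely that $C_{\bar k}$ is reduced and connected, so that its intersection matrix has one-dimensional kernel spanned by the fiber class.
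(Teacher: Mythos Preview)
Your proposal is correct and at its core follows the paper's approach: reduce to $h=f/g$ with $\dv h\in\drz C$, then invoke Lemma~\ref{lemma:H1-isomorphism-character-group}. The paper's entire proof is exactly those two sentences; since that lemma already asserts that $\tilg$ on $\drz C_{\ku}$ factors through $\prz C_{\bar k}\cong T(\bar k)$, a principal divisor maps to the identity and you are done. All of your intermediate work---the horizontal/fibral decomposition of $\dv_{\oc}(h)$, the argument that the fibral part is an integral multiple of the reduced fiber, and the gluing of the $\phi_i$ into a function on $C_{\bar k}$---is correct but unnecessary once that lemma is available; your Weil-reciprocity alternative is effectively a direct re-derivation of the relevant piece of that lemma.
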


\begin{proof}
  It suffices to show that 
  \[
  \tilg\bigg(\dv\bigg(\frac{f}{g}\bigg)\bigg) = 1.
  \]
  This follows from Lemma~\ref{lemma:H1-isomorphism-character-group}.
\end{proof}

Recall that $\Phi$ denotes the component group of $J_k$; it is a finite \'etale group scheme on $\Spec k$. Recall also the exact sequence of group schemes
\[
0 \to T \to J_k \to \Phi \to 0.
\]
(Note that $\Phi$ depends on the base field $K$, and in particular becomes larger upon ramified base change.) The group $\Phi(\kb)$ is effectively computable; see~\cite[Proposition~8.1.2]{raynaud1970} or~\cite[Appendix A]{baker2008}. The rational component group, $\Phi(k)$, can be computed via Theorem~1.11 of~\cite{bosch-liu1999}.

\begin{proposition}\label{prop:bosch-liu-phi-constant}
  \begin{enumerate}
      \item If every component of $C_k$ is geometrically irreducible, then $\Phi$ is a constant group scheme.
      \item For every field extension $\ell/k$, the map $J_k(\ell) \to \Phi(\ell)$ is surjective.
  \end{enumerate}
\end{proposition}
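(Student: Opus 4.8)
The plan is to invoke the general structure theory for Néron models of Jacobians with totally degenerate reduction, due to Raynaud and refined by Bosch--Liu, and to check that our hypotheses put us in the case where the relevant Galois module on the component group is a permutation module with trivial quotient.

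\emph{Part (1).} Recall that when $C$ has totally degenerate semi-stable reduction, the component group $\Phi(\kb)$ is described combinatorially via the dual graph $\Gamma$ of $C_{\bar k}$: by Raynaud~\cite{raynaud1970} (see also~\cite[Appendix~A]{baker2008}), $\Phi(\kb) \isom H_1(\Gamma,\Z)^\vee / H_1(\Gamma,\Z)$, where the map $H_1(\Gamma,\Z) \to H_1(\Gamma,\Z)^\vee$ is induced by the edge-length pairing (all edge lengths equal to $1$ in the semi-stable case), i.e.\ by the intersection pairing on $\Gamma$. The $\gk$-action on $\Phi(\kb)$ is the one induced functorially from the $\gk$-action on $\Gamma$ (vertices $=$ components, edges $=$ nodes). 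Now the hypothesis of part~(1) is precisely that every component $C_i$ of $C_k$ is geometrically irreducible, which means that $\gk$ fixes each vertex of $\Gamma$; hence $\gk$ acts on $\Gamma$ only by permuting edges that share their two endpoints, and in any case fixes $H_1(\Gamma,\Z)$ up to the induced orientation. The cleanest way to finish: the $\gk$-action on $\Phi$ factors through the action on the set of nodes of $C_k$, and since the specialization/intersection data defining $\Phi$ is already defined over $k$ (the whole configuration of components is), the Galois action on the finite group $\Phi(\kb)$ is trivial --- equivalently, by Bosch--Liu~\cite[Theorem~1.11]{bosch-liu1999}, the toric rank data and the monodromy pairing are unchanged by unramified base change, and with all components geometrically irreducible there is no Galois twisting left. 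Therefore $\Phi(\kb) = \Phi(k)$ and $\Phi$ is constant.

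\emph{Part (2).} For surjectivity of $J_k(\ell) \to \Phi(\ell)$ for every $\ell/k$: since (by part~(1), applied over the base field whose residue field is $\ell$, or directly) $\Phi$ is a constant finite group scheme, it suffices to lift each geometric component to an $\ell$-point of $J_k$. The identity component $J_k^0 = T$ is a (split, by our standing hypotheses, or at worst an arbitrary) torus, and the extension $0 \to T \to J_k \to \Phi \to 0$ gives a long exact sequence in étale cohomology over $\ell$:
\[
J_k(\ell) \to \Phi(\ell) \to H^1(\ell, T).
\]
Here $\ell$ is a finite field (or, for the statement as used, we only need finite extensions of the finite residue field $k$), and by Lang's theorem $H^1(\ell, T) = 0$ for any smooth connected group scheme over a finite field, in particular for the torus $T$. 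Hence $J_k(\ell) \to \Phi(\ell)$ is surjective. (If one wants the statement for an arbitrary extension $\ell/k$, note $\Phi$ constant means $\Phi(\ell) = \Phi(\bar k)$, and one reduces to a finite subextension over which each component has a rational point by Lang again, or simply cites that $J_k$ is smooth so $J_k(\bar k) \to \Phi(\bar k)$ is surjective and then descends.)

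\emph{Main obstacle.} The routine parts --- the cohomological surjectivity via Lang's theorem, and the combinatorial description of $\Phi(\kb)$ --- are standard. The one point requiring care is the precise statement that ``all components geometrically irreducible'' $\Rightarrow$ ``trivial $\gk$-action on $\Phi$'': one must be sure that the only source of a nontrivial Galois action on the Raynaud component group (for totally degenerate semi-stable reduction) is the permutation action on the vertices of $\Gamma$, and that edge-permutations fixing both endpoints act trivially on $H_1(\Gamma,\Z)^\vee/H_1(\Gamma,\Z)$ --- equivalently, one invokes Bosch--Liu~\cite[Theorem~1.11]{bosch-liu1999} directly, which computes $\Phi(k)$ from $\Phi(\kb)$ together with the Galois action and confirms it is all of $\Phi(\kb)$ in this case. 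That is the step I would write out most carefully.
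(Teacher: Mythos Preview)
The paper does not prove this proposition at all: immediately after the statement it simply says ``The two statements are respectively Corollary~1.8 and Lemma~2.1b of~\cite{bosch-liu1999}.'' So your proposal is already more than the paper offers.

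That said, your sketch for Part~(1) is not the cleanest route, and the paragraph labeled ``Main obstacle'' is indeed where it wobbles. Edge-permutations that fix both endpoints \emph{can} act nontrivially on $H_1(\Gamma,\Z)$: for the banana graph $B_2$, swapping the two edges negates the generator of $H_1\cong\Z$. So the claim that the $\gk$-action on $H_1(\Gamma,\Z)^\vee/H_1(\Gamma,\Z)$ is trivial is not automatic from ``vertices fixed''; it requires an additional argument (in $B_d$ one checks it by hand, but in general one would need to show that edge-permutations fixing endpoints act by signed permutations on a basis of $H_1$ and that this descends to the identity on the quotient). A cleaner argument---and the one the paper implicitly relies on elsewhere (e.g.\ Lemmas~\ref{lemma:hyp-H-regular} and~\ref{lemma:genus4-component-group})---is to use Raynaud's presentation of $\Phi(\kb)$ as the homology of $\Z^v \xrightarrow{M} \Z^v \to \Z$, where $v$ is the number of components and $M$ is the intersection matrix. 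This presentation involves only the components and the intersection numbers $(C_i\cdot C_j)$, not the individual nodes; when every $C_i$ is geometrically irreducible, both the index set and the matrix $M$ are already defined over $k$, so the $\gk$-action on $\Phi(\kb)$ is visibly trivial. This is essentially what Bosch--Liu's Corollary~1.8 packages.

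Your argument for Part~(2) via Lang's theorem is correct for finite $\ell$, and this is exactly what is behind Bosch--Liu's Lemma~2.1b. Your parenthetical handling of arbitrary $\ell$ is adequate for the paper's purposes, since only finite residue extensions are ever used.
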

The two statements are respectively Corollary~1.8 and Lemma~2.1b of~\cite{bosch-liu1999}. As the hypothesis of part 1 holds for us, by abuse of notation $\Phi$ will denote $\Phi(\kb)$ as well as its usual meaning.

Consider the commutative diagram
\begin{equation}
  \xymatrix{
    0 \ar[r] & T(k) \ar[r] \ar[d]^r & J_k(k) \ar[r] \ar[d]^r & \Phi \ar[r] \ar[d]^r & 0 \\
    0 \ar[r] & T(k) \ar[r]  & J_k(k) \ar[r]  & \Phi \ar[r]  & 0,}\label{eq:snake-setup-nu}
\end{equation}
  where the vertical maps are multiplication by $r$ and the horizontal sequences are exact by the last proposition. Applying the Snake Lemma, we obtain the exact sequence
  \begin{equation}
    J_k(k)[r] \to \Phi[r] \overset{\nu}{\to} \frac{T(k)}{rT(k)} \to \frac{J_k(k)}{rJ_k(k)}.\label{eq:snake-cbdy-nu}
  \end{equation}

  We now show how to compute $\nu$. 
  \begin{proposition}\label{prop:computing-nu-with-div-f}
    Let $D\in \dro C$ represent $\delta\in\Phi[r]$. Choose any $f\in \Kb(C)^\times$ such that $\deg(\dv f) = -r\deg(D)$. Let $\gamma \in H_1(\Gamma, \Z)$, be any 1-cycle such that the corresponding element of the character group, $\ghat \in X(T)$, generates $X(T_i)$. Then
    \[
     \ghat(\nu(\delta)) \equiv \gamma(\dv f) \pmod{r\mu(T_i)},
    \]
    where by abuse of notation we write $\ghat$ for the induced homomorphism $\frac{T(k)}{rT(k)} \to \frac{\mu(T_i)}{r\mu(T_i)}$.
  \end{proposition}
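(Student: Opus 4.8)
The plan is to unwind the snake-lemma construction of $\nu$ in terms of $D$ and $f$, and then to compute $\ghat$ using Lemma~\ref{lemma:H1-isomorphism-character-group}. Two preliminaries. First, by Lemma~\ref{lemma:principal-divr-same} the value $\tilg(\dv f)$, hence $\gamma(\dv f)$, depends only on $\deg(\dv f)=-r\deg D$, so we may replace $f$ by any convenient function with this divisor degree; since $\delta\in\Phi[r]$ and $D$ represents it, $D$ has total degree $0$ and $r\deg D\in M_{fib}$, so moving a fibral divisor of multidegree $r\deg D$ to a horizontal one avoiding the nodes, exactly as in the proof of Proposition~\ref{prop:geometric-obstruction-divisibility}, yields such an $f$, which we may take in $K(C)^\times$ with $\dv f\in\dro C$. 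Second, $[D]\in\Pic^0 C\subset J(K)$, and the reduction map $\mathrm{red}\colon J(K)=\oj(\ok)\to J_k(k)$ carries $\prz C$ into $T(k)\subset J_k(k)$ via $[E]\mapsto[\tau(E)]$; this restriction is the identification $\prz C_{\ku}\isom T(\bar k)$ of Lemma~\ref{lemma:H1-isomorphism-character-group}, on $K$-rational classes.

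Put $\tilde\delta:=\mathrm{red}([D])\in J_k(k)$; since $D$ represents $\delta$, the point $\tilde\delta$ lifts $\delta$ along $J_k(k)\to\Phi$. By the construction of the connecting homomorphism in~\eqref{eq:snake-cbdy-nu}, $\nu(\delta)$ is then the class in $T(k)/rT(k)$ of the unique $\sigma\in T(k)$ with $\sigma=r\tilde\delta$ inside $J_k(k)$ (under $T(k)\hookrightarrow J_k(k)$). The crux is to recognize $\sigma$ as a specialization: because $\dv f$ is principal on $C$, $r[D]=[rD+\dv f]$ in $\Pic C$, and $rD+\dv f$ has multidegree $r\deg D+\deg(\dv f)=0$, so $[rD+\dv f]\in\prz C$; applying the second preliminary,
\[
\sigma=r\tilde\delta=\mathrm{red}([rD+\dv f])=[\tau(rD+\dv f)]\in T(k).
\]

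It remains to apply $\ghat$. Choose $\tilg\in\fim$ over $\gamma$. By Lemma~\ref{lemma:H1-isomorphism-character-group}, $\ghat([\tau(L)])=\tilg(L)$ for all $L\in\drz C$, so, using that $\tilg\colon\dro C\to\bar k^\times$ is a homomorphism,
\[
\ghat(\sigma)=\tilg(rD+\dv f)=\tilg(D)^{r}\,\tilg(\dv f).
\]
Here $\tilg(D)\in\mu(T_i)$ --- this is the one place the $K$-rationality of $D$ intervenes. Indeed $\tilg(D)$ depends only on the $k$-rational divisor $\tau(D)$, which we write as a $k$-rational divisor $E'$ of multidegree $0$ plus a divisor supported at the base points occurring in $\tilg$; then $\tilg(E')=\ghat([E'])\in\mu(T_i)$ because $[E']\in T(k)$ (Theorem~\ref{thm:T-sum-mu-i}), and the remaining contribution lies in $\mu(T_i)$ by the defining condition of $\fim$ (taking all base points above a given component equal, cf.\ Lemma~\ref{lemma:tilg-indep-base-pt}) --- this is precisely the argument proving well-definedness of $\gamma$ in \S\ref{sec:torsion-descent}. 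Consequently $\tilg(D)^{r}\in r\mu(T_i)$, and reducing the last display modulo $r\mu(T_i)$ gives $\ghat(\nu(\delta))\equiv\gamma(\dv f)\pmod{r\mu(T_i)}$.

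The step I expect to require the most care is the identification $\sigma=[\tau(rD+\dv f)]$: it rests on the compatibility of $\mathrm{red}\colon J(K)\to J_k(k)$ with multidegrees, with fibral divisors on $\oc$, and with the specialization map $\tau$ --- in effect, that the reduction of a multidegree-zero class lies in the toric part $T=J_k^0$ and is computed there by $\tau$. The rest is a formal consequence of the structure established in \S\ref{sec:basic-facts-algebr} and \S\ref{sec:eval-divis-1}.
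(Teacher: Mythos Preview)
Your proof is correct and follows essentially the same approach as the paper: a direct diagram chase of the connecting map $\nu$ from~\eqref{eq:snake-cbdy-nu}, identifying the lift of $r\tilde\delta$ to $T(k)$ with the class of $rD+\dv f$, and then evaluating $\ghat$ via Lemma~\ref{lemma:H1-isomorphism-character-group}. The paper's proof is more terse --- it works directly with $[\tau(D)]\in J_k(k)$ rather than passing through $\mathrm{red}:J(K)\to J_k(k)$, and simply asserts the final congruence $\tilg(rD+\dv f)\equiv\gamma(\dv f)$ --- whereas you spell out why $\tilg(D)\in\mu(T_i)$ and why a suitable $f$ exists; these are welcome details but do not change the argument.
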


  It is not hard to show that $f$ may be taken to be the product of defining equations for components of $C_k$.

  \begin{proof}
    For convenience, we will append the subscript $t$ or $b$ to the groups in~\eqref{eq:snake-setup-nu} to distinguish objects in the top row from objects in the bottom row; e.g., $T(k)_t$ refers to the top left object in~\eqref{eq:snake-setup-nu}. Also, for a divisor $D$, write $[D]$ for the divisor class of $D$.

    We proceed with an explicit diagram-chase: start with $\delta \in \Phi[r]$, and consider it as an element of $\Phi_t$. Choose $D \in \dro C$ so that the class $[\tau(D)] \in J_k(k)_t$ maps to $\delta$. Then $[r\tau(D)] \in J_k(k)_b$ maps to $0$ in $\Phi_b$, and so lies in the subgroup $T(k)_b$; the class of this element in $T(k)/rT(k)$ is none other than $\nu(\delta)$.

  Now $[rD] = [rD + \dv f]$. But by our choice of $f$, $rD + \dv f$ is an element of $\Div^{\{0\}} C$. Lemma~\ref{lemma:H1-isomorphism-character-group} tells us that 
  \[
  \ghat(\nu(\delta)) = \tilg(rD + \dv f)
  \]
  for any choice of $\tilg \in \fim$ lying over $\gamma$. The first claim now follows by observing that
  \[
  \tilg(rD + \dv f) \equiv \gamma(\dv f) \pmod{r\mu(T_i)}.
  \]
  \end{proof}

  \begin{corollary}\label{cor:torsion-nonsplit}
    Fix the notation of Prop.~\ref{prop:computing-nu-with-div-f}. If $\tilde{\delta} \in J(k)$ maps to $\delta$, then
    \[
    \nu(\delta) \equiv r\tilde{\delta} \pmod{rT(k)}.
    \]
  \end{corollary}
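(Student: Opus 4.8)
The plan is to obtain the corollary by repeating the Snake Lemma diagram chase from the proof of Proposition~\ref{prop:computing-nu-with-div-f}, but taking $\tilde{\delta}$ itself as the chosen lift of $\delta$ rather than a divisor class $[\tau(D)]$. In particular I will not need the auxiliary function $f$ or the cycle $\gamma$: the assertion is purely about the connecting homomorphism $\nu$ of~\eqref{eq:snake-cbdy-nu}.

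First I would check that $r\tilde{\delta}$ lands in $T(k)$, so that the right-hand side of the asserted congruence makes sense. Regard $\tilde{\delta}$ as an element of $J_k(k)$ — reducing via the surjection $J(K)\to J_k(k)$ if one reads $J(k)$ literally as a group of $K$-points — so that it maps to $\delta$ under $J_k(k)\to\Phi$, a surjection by Proposition~\ref{prop:bosch-liu-phi-constant}(2). Then $r\tilde{\delta}$ maps to $r\delta=0$ in $\Phi$, since $\delta\in\Phi[r]$; as $T(k)$ is by definition the kernel of $J_k(k)\to\Phi$, this forces $r\tilde{\delta}\in T(k)$, and its class in $T(k)/rT(k)$ is thus well-defined.

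Next I would run the chase. In diagram~\eqref{eq:snake-setup-nu}, lift $\delta\in\Phi_t$ to $\tilde{\delta}\in J_k(k)_t$; multiplication by $r$ carries $\tilde{\delta}$ to $r\tilde{\delta}\in J_k(k)_b$, which maps to $0$ in $\Phi_b$ and hence, by the previous paragraph, lies in the subgroup $T(k)_b$. By the very definition of the connecting map, $\nu(\delta)$ is the class of this element in $T(k)/rT(k)$, i.e. $\nu(\delta)\equiv r\tilde{\delta}\pmod{rT(k)}$, which is the claim. I would conclude by remarking on consistency with the well-definedness of $\nu$: any two lifts of $\delta$ differ by some $t\in T(k)$, whence $r\tilde{\delta}$ changes by $rt\in rT(k)$ and its class is unchanged. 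There is no serious obstacle here; the only points deserving care are the bookkeeping between $J(K)$, its reduction $J_k(k)$, and the subtorus $T(k)$, together with the elementary but essential observation that $\delta\in\Phi[r]$ is exactly what pushes $r\tilde{\delta}$ into $T(k)$.
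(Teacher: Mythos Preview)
Your argument is correct and is essentially the paper's own proof written out in full: the paper simply notes that $\tilde{\delta}$ plays the role of $[\tau(D)]$ in the Snake Lemma chase of Proposition~\ref{prop:computing-nu-with-div-f}, which is exactly the computation you spell out. Your extra remarks on why $r\tilde{\delta}\in T(k)$ and on independence of the lift are helpful clarifications but add nothing new mathematically.
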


  \begin{proof}
    One observes that $\tilde{\delta}$ is identical to $[\tau(D)]$ in the proof of Prop.~\ref{prop:computing-nu-with-div-f}. From this observation and the snake lemma argument in the above proof, the claim follows.
  \end{proof}

  \begin{remark}\label{rmk:full-torsion}
    If $M$ is an abelian group, write $M(p')$ for the torsion in $M$ with order prime to $p$. Hensel's Lemma and the $r$-divisibility of the kernel of reduction ($(r,p) = 1$) shows that $J(K)(p') = J(k)(p')$. Thus by varying $r$ and applying the theory of elementary divisors, one can use Proposition~\ref{prop:computing-nu-with-div-f} and Corollary~\ref{cor:torsion-nonsplit} to effectively compute the torsion subgroup $J(K)(p')$. For example, if for all $r$ ($p\nmid r$) we find that $\nu$ is the zero map, then $J(K)(p') \isom T(k) \oplus \Phi(p')$. 

    If $K=\Q_p$ with $p\neq 2$, then the kernel of reduction is torsion-free~\cite[Appendix]{katz1981}; if in addition $\Phi[p]=0$, then we are able to compute the full torsion subgroup of $J(K)$.

    Even when $K \neq \Q_p$ but $\Phi[p] = 0$, one can still compute $J(K)(p)$ by Mumford-Tate uniformization. This will be explored in future work.
  \end{remark}


  \begin{theorem}\label{thm:descent-with-nu}
    Let $T=\prod T_j$ be a principal decomposition of $T$. Let $\gamma_j \in H_1(\Gamma, \Z)$ be such that $\chi_{\gamma_j}$ is a $\Z[\gk]$-generator for $X(T_j)$. Given $D \in \dr C$, $[D] \in r\Pic C$ if and only if there exists some $\delta\in\Phi[r]$ such that
    \[
    \gamma_j(L)\in\chi_{\gamma_j}(\nu(\delta)) \cdot r\mu(T_j)
    \]
 for all $j$.
  \end{theorem}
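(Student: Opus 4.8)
The plan is to reduce the assertion ``$[D]\in r\Pic C$'' to a statement purely about the toric part $T(k)$ via the reduction/specialization machinery, and then read off the criterion from the snake-lemma sequence~\eqref{eq:snake-cbdy-nu}. Under the running hypothesis that every component of $C_k$ is $\Pro^1_k$ we have $\Pic C=(\Pic\Cbar)^G$, and by Proposition~\ref{prop:bosch-liu-phi-constant} the reduction map $\Pic C=J(K)=J_k(k)$ behaves well; in particular, since $D\in\dr C$ has $\deg D\in r\Z^v$, after translating $\tau(D)$ by a fibral divisor (as in the proof of Proposition~\ref{prop:geometric-obstruction-divisibility}) we may assume $\tau(D)$ represents a class $x$ that already lies in the image of $T(k)\hookrightarrow J_k(k)$, i.e.\ a point of $\prz C_{\bar k}$ which under Lemma~\ref{lemma:H1-isomorphism-character-group} is a genuine point of $T(k)$. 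I would first make this reduction precise: a divisor $D\in\dr C$ is $r$-divisible in $\Pic C$ if and only if its reduction $x\in J_k(k)$ lies in $rJ_k(k)$, and I will track $x$ through the diagram~\eqref{eq:snake-setup-nu}.

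Next, the core of the argument is the exactness of~\eqref{eq:snake-cbdy-nu}: writing $\bar x$ for the image of $x$ in $T(k)/rT(k)$ (legitimate once $D$ has been arranged as above, though in general one must first subtract an $r$-divisible rational divisor to push $x$ into $T(k)$; note an $r$-divisible correction changes nothing mod $rJ_k(k)$), we have $x\in rJ_k(k)$ if and only if $\bar x$ dies in $J_k(k)/rJ_k(k)$, which by exactness of~\eqref{eq:snake-cbdy-nu} happens if and only if $\bar x\in\operatorname{im}\nu$, i.e.\ if and only if $\bar x=\nu(\delta)$ in $T(k)/rT(k)$ for some $\delta\in\Phi[r]$. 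Now I apply Theorem~\ref{thm:T-sum-mu-i}: the classes $(\chi_{\gamma_j})$ give an isomorphism $T(k)/rT(k)\xrightarrow{\sim}\bigoplus_j \mu(T_j)/r\mu(T_j)$, so the equality $\bar x=\nu(\delta)$ is equivalent to $\chi_{\gamma_j}(\bar x)=\chi_{\gamma_j}(\nu(\delta))$ in $\mu(T_j)/r\mu(T_j)$ for every $j$. Finally, by Lemma~\ref{lemma:H1-isomorphism-character-group} (and Lemma~\ref{lemma:principal-divr-same}, to see the value is independent of representative once $\deg D$ is fixed), the value $\chi_{\gamma_j}(\bar x)$ is computed as $t_{\gamma_j}(\tau(D))=\gamma_j(D)\in\mu(T_j)/r\mu(T_j)$, where $t_{\gamma_j}$ is chosen in $F_1(\gamma_j,\mu)$ so that the map $\gamma_j$ is the one defined in \S\ref{sec:torsion-descent}. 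Chaining these equivalences gives exactly: $[D]\in r\Pic C$ iff there is $\delta\in\Phi[r]$ with $\gamma_j(L)\in\chi_{\gamma_j}(\nu(\delta))\cdot r\mu(T_j)$ for all $j$.

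The step I expect to be the main obstacle is the first reduction: carefully justifying that ``$[D]$ is $r$-divisible in $\Pic C$'' is equivalent to ``its reduction is $r$-divisible in $J_k(k)$,'' and that the reduction of $D$ can indeed be interpreted as an element $\bar x\in T(k)/rT(k)$ to which $\nu$ can be compared. This requires knowing $\Pic C=J(K)$, the identification $J(K)=J_k(k)$ coming from the Néron model together with $r$-divisibility of the kernel of reduction when $(r,p)=1$ (as invoked in Remark~\ref{rmk:full-torsion}), surjectivity of $J_k(k)\to\Phi(k)$ from Proposition~\ref{prop:bosch-liu-phi-constant}(2), and the compatibility of the specialization map $\tau$ with linear equivalence so that $\gamma_j(D)$ depends only on $[D]$ once $\deg D$ is pinned down. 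Once this bookkeeping is in place, the remainder is a formal diagram chase through~\eqref{eq:snake-cbdy-nu} combined with Theorem~\ref{thm:T-sum-mu-i}. I would organize the writeup so that the snake-lemma equivalence and the component-by-component translation via $\chi_{\gamma_j}$ are stated cleanly, leaving only the identification $\chi_{\gamma_j}(\bar x)=\gamma_j(D)$ to cite from Lemma~\ref{lemma:H1-isomorphism-character-group}.
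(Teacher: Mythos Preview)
Your proposal is correct and follows essentially the same route as the paper: reduce to $D\in\drz C$ by subtracting an $r$-divisible divisor (the paper does this by choosing $D'\in\dro C$ with $r\deg D'=\deg D$ and replacing $D$ by $D-rD'$, which is slightly cleaner than the fibral-divisor translation you first mention, though you do state the correct $r$-divisible correction a few lines later), then invoke the snake sequence~\eqref{eq:snake-cbdy-nu} to see that $x\in rJ_k(k)$ iff $\bar x\in\operatorname{im}\nu$, and finally separate components via Theorem~\ref{thm:T-sum-mu-i} and Lemma~\ref{lemma:H1-isomorphism-character-group}. The paper's write-up is terser and leaves the identification $J(K)/rJ(K)\cong J_k(k)/rJ_k(k)$ implicit, whereas you spell out that this uses $p\nmid r$ and the $r$-divisibility of the kernel of reduction; that extra care is appropriate.
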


  \begin{proof}
    Let $D' \in \dro C$ satisfy $r(D'\cdot C_i) = (D\cdot C_i)$ for all $i$. Then $D \in r\Pic C$ if and only if $D-rD' \in r\Pic C$. Furthermore 
    \[
    \gamma_j(D) \equiv \gamma_j(D-rD') \pmod{r\mu(T_j)}
    \]
    for all $j$. Therefore we may assume that $D \in \drz C$. In particular, the reduction of the class of $[D]$ to the special fiber of the Jacobian $J_k$ lies in the torus $T(k)$. Let $x \in T(k)$ be the point corresponding to $[D]$. From the exact sequence~\eqref{eq:snake-cbdy-nu}, $[D]$ is divisible by $r$ in $J_k(k)$ if and only if $x$ lies in the subgroup generated by $rT(k)$ and $\im \nu$. The claim follows.
  \end{proof}

  \begin{remark}\label{rmk:div-Cunr}
    The proof, together with Lemma~\ref{lemma:principal-divr-same}, shows that for ${L} \in \Pic C$, we may use any $D \in \dr C_{\ku}$ which represents ${L}$; then $\gamma_i(D) \in \mu(T_i)$, and the above theorem holds.
  \end{remark}

  \begin{corollary}\label{cor:toric-descent-with-functions}
    For each $\delta \in \Phi[r]$, let $D_\delta \in \dro C_{\ku}$ represent it, and choose $f_\delta$ so that $\deg(\dv f_\delta) = -r\deg(D_\delta)$. Given $D \in \dr C_{\ku}$ with $[D] \in \Pic C$, we have $[D] \in r\Pic C$ if and only if there exists some $\delta$ such that $\gamma_i(D+\dv f_\delta) \in r\mu(T_i)$ for every $i$.
  \end{corollary}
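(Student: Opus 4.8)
The plan is to derive this directly from Theorem~\ref{thm:descent-with-nu} and Proposition~\ref{prop:computing-nu-with-div-f}; the only work is bookkeeping. First I would record the elementary observations that $D + \dv f_\delta$ again lies in $\dr C_{\ku}$ — its degree vector is $\deg(D) - r\deg(D_\delta) \in r\Z^v$ — and that $[D + \dv f_\delta] = [D] \in \Pic C$, so that $\gamma_i(D)$, $\gamma_i(\dv f_\delta)$ and $\gamma_i(D + \dv f_\delta)$ are all defined, lie in $\mu(T_i)$ by Remark~\ref{rmk:div-Cunr}, and satisfy $\gamma_i(D + \dv f_\delta) = \gamma_i(D)\,\gamma_i(\dv f_\delta)$ since $\gamma_i$ is a homomorphism. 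By Lemma~\ref{lemma:principal-divr-same} the value $\gamma_i(\dv f_\delta)$ depends only on $\deg(\dv f_\delta)$, and Proposition~\ref{prop:computing-nu-with-div-f} — whose degree hypothesis is exactly $\deg(\dv f_\delta) = -r\deg(D_\delta)$ — identifies it with $\chi_{\gamma_i}(\nu(\delta))$ modulo $r\mu(T_i)$.

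With these in hand, I would argue the two implications. In the ``if'' direction, given $\delta\in\Phi[r]$ with $\gamma_i(D + \dv f_\delta)\in r\mu(T_i)$ for all $i$, I would set $E = D + \dv f_\delta$; then $E \in \dr C_{\ku}$, $[E] = [D]$, and $\gamma_i(E)\in r\mu(T_i) = \chi_{\gamma_i}(\nu(0))\cdot r\mu(T_i)$ for every $i$, so Theorem~\ref{thm:descent-with-nu} applied to $E$ (via Remark~\ref{rmk:div-Cunr}) yields $[D] = [E]\in r\Pic C$. In the ``only if'' direction, Theorem~\ref{thm:descent-with-nu} applied to $D$ supplies some $\delta'\in\Phi[r]$ with $\gamma_i(D)\in\chi_{\gamma_i}(\nu(\delta'))\cdot r\mu(T_i)$ for all $i$; I would then take $\delta = -\delta'\in\Phi[r]$ and use that $\nu$ and $\chi_{\gamma_i}$ are homomorphisms, together with Proposition~\ref{prop:computing-nu-with-div-f}, to compute
\[
\gamma_i(D + \dv f_\delta) = \gamma_i(D)\,\gamma_i(\dv f_\delta) \equiv \gamma_i(D)\,\chi_{\gamma_i}(\nu(\delta'))^{-1} \pmod{r\mu(T_i)},
\]
which lies in $r\mu(T_i)$ by the choice of $\delta'$. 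This gives the corollary.

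I do not expect a serious obstacle here, since the substance is already contained in Theorem~\ref{thm:descent-with-nu} and Proposition~\ref{prop:computing-nu-with-div-f}. The one point to watch is the sign: the condition $\gamma_i(D + \dv f_\delta)\in r\mu(T_i)$ corresponds to the coset $\chi_{\gamma_i}(\nu(-\delta))\cdot r\mu(T_i)$ rather than $\chi_{\gamma_i}(\nu(\delta))\cdot r\mu(T_i)$, so passing between the existential statement of the corollary and that of Theorem~\ref{thm:descent-with-nu} requires the (trivial) fact that $\Phi[r]$ is closed under negation. One should also confirm at the outset that the divisors and functions may be taken over $\ku$ and that the resulting $\gamma_i$-values are insensitive to these choices, which is exactly what Remark~\ref{rmk:div-Cunr} and Lemma~\ref{lemma:principal-divr-same} provide.
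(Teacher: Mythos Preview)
Your proposal is correct and follows exactly the approach of the paper, whose proof is the single sentence ``Combine Theorem~\ref{thm:descent-with-nu}, Remark~\ref{rmk:div-Cunr}, and Proposition~\ref{prop:computing-nu-with-div-f}.'' Your write-up simply makes explicit the bookkeeping (including the harmless sign swap $\delta\leftrightarrow -\delta$) that the paper leaves to the reader.
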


  \begin{proof}
    Combine Theorem~\ref{thm:descent-with-nu}, Remark~\ref{rmk:div-Cunr}, and Proposition~\ref{prop:computing-nu-with-div-f}.
  \end{proof}

\subsection{Simplifications for normal tori}
\label{sec:simpl-norm-tori}

We suppose that $T$ is a principal torus with $X(T)$ generated by $\chi$. Let $\gamma \in H_1(\Gamma, \Z)$ correspond to $\chi$. Consider $t_\gamma \in \fim$. Let $m$ be the smallest positive integer such that $\Fr^m \gamma = \gamma$. We define $\Nm t_\gamma$ to be
\[
\Nm t_\gamma = \prod_{i=0}^{m-1} \Fr^i t_\gamma
\]
where the product is computed in $F_1(\Gamma)$. The projection of $\Nm t_\gamma$ lies over the 1-cycle $\Nm \gamma:=\sum_{i=0}^{m-1} \Fr^i \gamma$, which generates $H_1(\Gamma, \Z)^\gk$. Given a torus with a fixed principal decomposition, extend $\Nm$ in the obvious way to generators $\gamma_i$ for each principal subtorus. For $\gamma \in H_1(\Gamma, \Z)^\gk$, we may write it as
\[
\gamma = \sum e_i \Nm \gamma_i
\]
for some integers $e_i$; if $T$ is a normal torus, this representation is unique. We then define $\fim$ to be the set of elements in $F_1(\Gamma)$ of the form
\[
t_\gamma = \prod (\Nm t_{\gamma_i})^{e_i}
\]
where $t_{\gamma_i} \in F_1(\gamma_i, \mu(T_i))$. Note that the map
\[
t_\gamma: \dr C \to k^\times
\]
induces a well-defined map
\[
\gamma: \pr C \to \frac{k^\times}{k^{\times r}}.
\]

  \begin{corollary}\label{cor:descent-normal-torus}
    Suppose $T$ is a normal torus and $r\mid (q - 1)$. Given $D \in \dr C_{\ku}$ with $[D] \in \Pic C$, we have $[D] \in r\Pic C$ if and only if there exists some $\delta\in\Phi[r]$ such that
    \[
    \gamma(D)\in\chi_{\gamma}(\nu(\delta)) \cdot k^{\times r}
    \]
 for all $\gamma \in H_1(\Gamma, \Z)^{\gk}$.
  \end{corollary}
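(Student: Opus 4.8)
The plan is to deduce Corollary~\ref{cor:descent-normal-torus} from Theorem~\ref{thm:descent-with-nu} by rewriting the per-component conditions of that theorem, which live over the residue extensions $\ell_j$ attached to the norm subtori, as a single family of conditions over $k$; the hypothesis $r\mid q-1$ is exactly what makes the norm an isomorphism after passing to quotients by $r$th powers. Fix the principal decomposition $T=\prod_j T_j$ with $T_j=R_{\ell_j/k}\G_m$ a norm torus, put $d_j=[\ell_j:k]$, and choose $\gamma_j\in H_1(\Gamma,\Z)$ so that $\chi_{\gamma_j}$ is a $\Z[\gk]$-generator of $X(T_j)$. By Proposition~\ref{prop:indecomposable-torus-mu} and the discussion of norm tori following it, $\mu(T_j)=\mu_{q^{d_j}-1}(\kb^\times)=\ell_j^\times$, so $r\mu(T_j)=\ell_j^{\times r}$; moreover, since $\Fr$ acts on $X(T_j)=\Z[\chi]/(\chi^{d_j}-1)$ as multiplication by $\chi$, the orbit of $\gamma_j$ under $\Fr$ has length exactly $d_j$, and under the Galois-equivariant isomorphism $H_1(\Gamma,\Z)\xrightarrow{\sim}X(T)$ of Lemma~\ref{lemma:H1-isomorphism-character-group} the cycle $\Nm\gamma_j=\sum_{i=0}^{d_j-1}\Fr^i\gamma_j$ corresponds to $\sum_{i=0}^{d_j-1}\Fr^i\chi_{\gamma_j}$. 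Recall from Section~\ref{sec:simpl-norm-tori} that, $T$ being normal, the $\Nm\gamma_j$ form a $\Z$-basis of $H_1(\Gamma,\Z)^{\gk}$.

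As in the proof of Theorem~\ref{thm:descent-with-nu}, I would first replace $D$ by $D-rD'$ for a divisor $D'\in\dro C_{\ku}$ with $r(D'\cdot C_i)=(D\cdot C_i)$; this changes neither the $r$-divisibility of $[D]$ nor any of the classes $\gamma(D)\in k^\times/k^{\times r}$, so we may assume $D\in\drz C_{\ku}$. Since $[D]$ lies in $\Pic C$ it is $\gk$-invariant, so Lemma~\ref{lemma:H1-isomorphism-character-group} identifies $[D]$ with a point $x\in T(k)$, and, exactly as in the proof of that lemma, $\gamma(D)=\chi_\gamma(x)$ for every $\gamma$ (the choice of local parameters does not affect the value on a degree-zero divisor). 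In particular $\gamma_j(D)=\chi_{\gamma_j}(x)\in\ell_j^\times$, the last containment because $\chi_{\gamma_j}$ is pulled back along the projection $T\to T_j$ and $x$ maps into $T_j(k)$ (Proposition~\ref{prop:indecomposable-torus-mu} for $T_j$).

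Next I would use the torus identity: for any $y\in T(k)$, the compatibility $\chi^{\Fr}(y)=\Fr(\chi(y))$ of Proposition~\ref{prop:double-dual-torus} gives
\[
\chi_{\Nm\gamma_j}(y)=\prod_{i=0}^{d_j-1}\Fr^i\bigl(\chi_{\gamma_j}(y)\bigr)=\Nm_{\ell_j/k}\bigl(\chi_{\gamma_j}(y)\bigr),
\]
with $\chi_{\gamma_j}(y)\in\ell_j^\times$ as above. Taking $y=x$ yields $(\Nm\gamma_j)(D)=\Nm_{\ell_j/k}\bigl(\gamma_j(D)\bigr)$, and taking $y$ to be a lift in $T(k)$ of $\nu(\delta)$ yields $\chi_{\Nm\gamma_j}(\nu(\delta))=\Nm_{\ell_j/k}\bigl(\chi_{\gamma_j}(\nu(\delta))\bigr)$ in $k^\times/k^{\times r}$. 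Now fix $\delta\in\Phi[r]$ and set $u_j:=\gamma_j(D)\cdot\chi_{\gamma_j}(\nu(\delta))^{-1}\in\ell_j^\times/\ell_j^{\times r}$. The condition of Theorem~\ref{thm:descent-with-nu} for this $\delta$ is ``$u_j=1$ for all $j$'', while the condition of Corollary~\ref{cor:descent-normal-torus} for this $\delta$ is ``$\psi_\delta\equiv 1$'', where $\psi_\delta\colon H_1(\Gamma,\Z)^{\gk}\to k^\times/k^{\times r}$, $\gamma\mapsto\gamma(D)\cdot\chi_\gamma(\nu(\delta))^{-1}$, is a group homomorphism; since the $\Nm\gamma_j$ generate the source and $\psi_\delta(\Nm\gamma_j)=\Nm_{\ell_j/k}(u_j)$ by the identities just recorded, ``$\psi_\delta\equiv 1$'' is equivalent to ``$\Nm_{\ell_j/k}(u_j)=1$ for all $j$''.

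It remains only to observe that, for each $j$, $\Nm_{\ell_j/k}$ induces an isomorphism $\ell_j^\times/\ell_j^{\times r}\xrightarrow{\sim}k^\times/k^{\times r}$: the norm $\ell_j^\times\to k^\times$ on finite fields is surjective, hence so is the induced map on quotients, and since $r\mid q-1$ divides $q^{d_j}-1$ both quotients are cyclic of order exactly $r$, so the surjection is an isomorphism. Granting this, ``$u_j=1$'' and ``$\Nm_{\ell_j/k}(u_j)=1$'' are equivalent for each $j$, so the condition of Theorem~\ref{thm:descent-with-nu} holds for some $\delta$ precisely when the condition of Corollary~\ref{cor:descent-normal-torus} holds for some $\delta$; since each holds if and only if $[D]\in r\Pic C$, the corollary follows. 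The only genuinely new input is the arithmetic isomorphism above, which is where $r\mid q-1$ enters; the step most in need of care is not that but the bookkeeping of the previous two paragraphs — matching the $\Nm$-normalized generators $t_{\Nm\gamma_j}$ that define the map $\gamma$ of Section~\ref{sec:simpl-norm-tori} with the raw generators $\gamma_j$ of Theorem~\ref{thm:descent-with-nu}, and verifying that the reduction to $\drz C_{\ku}$ is harmless for all $\gamma\in H_1(\Gamma,\Z)^{\gk}$ at once.
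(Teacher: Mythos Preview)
Your argument is correct and follows essentially the same route as the paper: reduce to the norm-torus factors, identify $\chi_{\Nm\gamma_j}$ with $\Nm_{\ell_j/k}\circ\chi_{\gamma_j}$ via the Galois-equivariance of Proposition~\ref{prop:double-dual-torus}, and use that $r\mid q-1$ forces the norm to induce an isomorphism $\ell_j^\times/\ell_j^{\times r}\xrightarrow{\sim}k^\times/k^{\times r}$. You are simply more explicit than the paper about the reduction to $\drz C_{\ku}$ and about checking the condition on the $\Z$-basis $\{\Nm\gamma_j\}$ of $H_1(\Gamma,\Z)^{\gk}$, but the substance is the same.
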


  \begin{proof}
    It suffices to consider the case where $T$ is a norm torus; say $X(T) \isom \Z[\chi]/(\chi^g -1)$, with Frobenius acting as multiplication by $\chi$. Let $\ell$ be the unique degree $g$ extension of $k$. Then $\chi:T(k) \to \ell^\times$ is an isomorphism. Furthermore, $X(T)^\gk$ is generated by
    \[
    \Nm \chi:=1+\chi+\chi^\Fr + \cdots + \chi^{\Fr^{g-1}}.
    \]
    Observe that the diagram
    \[
    \xymatrix{
      T(k) \ar[d]^\chi \ar[rd]^{\Nm \chi} & \\
      \ell^\times \ar[r]^{\Nm} & k^\times
      }
      \]
      commutes. Now we tensor everything in the diagram with $\Z/r\Z$ and observe that since $r\mid(q-1)$, the induced norm map
      \[
      \Nm: \frac{\ell^\times}{\ell^{\times r}} \to \frac{k^\times}{k^{\times r}}
      \]
      is an isomorphism. The claim follows.      
  \end{proof}



\section{Curves with $\Gamma = B_d$}
\label{sec:curves-with-gamma}

 The graph $B_d$ is the so-called \emph{banana graph}; it consists of 2 vertices connected to each other by $d$ edges:
\[
\xygraph{
  !{<0cm,0cm>;<1cm,0cm>:<0cm,1.3cm>::}
  !{(0,1) }*{\bullet}="t"
  !{(6,1) }*{\bullet}="b"
  !{(3,.1) }*{\vdots}
  "t"-@/_{1pc}/"b"^1
  "t"-@/_{2pc}/"b"^2
  "t"-@/_{4pc}/"b"_d
}
\]
 Equivalently, the special fiber of a curve $C$ having dual graph $B_d$ consists of two $\Pro^1$s which intersect transversely at $d$ points; the involution swapping the two components shows that such a curve must be hyperelliptic. For certain families of curves with dual graph $B_d$, we determine rationality of theta characteristics and then compute the order of the prime-to-$p$ rational torsion on the Jacobian.

\subsection{Rationality of theta characteristics when $\Gamma = B_d$}
\label{sec:rati-theta-char-bd}

We apply Theorem~\ref{thm:descent-with-nu} in this section to determine if certain curves with dual graph $B_d$ have rational theta characteristics. We first deal with the case $d=3$ for clarity, then repeat the computation for higher $d$ under weaker hypotheses. Recall that a theta characteristic is a square root of the canonical class. 

\paragraph{The case $d=3$.}
\label{sec:case-d=3}

Suppose we are given a genus 2 curve $C$ over a field $K$ with affine piece
\[
y^2 = g(x)^2 + \pi h(x)
\]
which satisfies the following hypotheses:
\begin{itemize}
    \item $K$ is a local field with residue characteristic $p\geq 5$, and uniformizer $\pi$;
    \item $g(x)\in \ok[x]$ is a monic cubic polynomial such that $(\pi, g, g')\subset \ok[x]$ is the unit ideal (so the reduction of $g(x)$ modulo $\pi$ is separable); and
    \item $h(x)\in \ok[x]$ is a polynomial of degree $e\leq 6$ such that $(\pi, g, h)\subset \ok[x]$ is the unit ideal.
\end{itemize}
We will say that any such $C$ satisfies hypothesis $(H)$. As we shall see, such a curve has dual graph $B_3$.
\begin{lemma}\label{lemma:hyp-H-regular}
  Let $C$ satisfy hypothesis $(H)$, and let $\oc$ be the associated arithmetic surface over $\ok$; i.e., use the same equation and adjoin the usual two points at infinity.\footnote{That is, use the same equation to obtain $\oc^0\subset \mathbb{A}^2_{\ok}$, let $\tilde{\oc}$ be the projective closure, and let $\oc$ be the desingularization at infinity of $\tilde{\oc}$.} Then $\oc$ is minimal and regular, the special fiber has dual graph $B_3$, and the component group $\Phi$ of the Jacobian of $C$ over $K$ is $\Z/3\Z$.
\end{lemma}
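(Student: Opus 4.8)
The plan is to analyze the given arithmetic surface $\oc$ directly from its defining equation $y^2 = g^2 + \pi h$, reducing mod $\pi$ to understand the special fiber and then checking regularity at each special-fiber point. First I would compute the special fiber: modulo $\pi$ the equation becomes $y^2 = \bar g(x)^2$, i.e.\ $(y-\bar g)(y+\bar g) = 0$, which over $\bar k$ is two copies of $\mathbb{A}^1$ (the affine parts of two $\mathbb{P}^1$s, once the points at infinity are adjoined as in the footnote — here the hypothesis $\deg g = 3$ odd and $\deg h \le 6$ ensures the behavior at infinity is the standard hyperelliptic one, giving a single smooth point at infinity on each component after desingularization, since the two branches $y = \pm g$ have the same leading term). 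The two components meet where $\bar g = y = 0$, i.e.\ at the $3$ points $(\alpha,0)$ with $\alpha$ a root of $\bar g$; these are distinct because $(\pi,g,g')$ is the unit ideal makes $\bar g$ separable. So $C_k$ is two $\mathbb{P}^1$s meeting transversally at $3$ points, and the dual graph is $B_3$, once I verify the intersections are nodes.

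Next I would check regularity of $\oc$ at the points of $C_k$. Away from the singular points of $C_k$, smoothness of $C_k/k$ gives regularity of $\oc$. At a crossing point $P = (\alpha, 0)$ with $\bar g(\alpha) = 0$: lift $\alpha$ to $\ok$ (or work in the completed local ring) and use local coordinates $u = x - \alpha$, $v = y$. Locally $g(x)^2 = (\text{unit})\cdot u^2 + (\text{higher})$ — more precisely, since $\bar g(\alpha)=0$ and $\bar g'(\alpha)\neq 0$, we have $g = u\cdot(\text{unit}) + \pi(\cdots)$, so $g^2 \in (u^2, \pi)$. The equation reads $v^2 - g^2 = \pi h$, and at $P$ the value $\bar h(\alpha) \neq 0$ because $(\pi, g, h)$ is the unit ideal; hence in the local ring the maximal ideal is $(u, v, \pi)$ and the relation shows $\pi \in (u^2, v^2, \pi h) \subseteq (u, v)\cdot\mathfrak{m} + (\pi h)$, while $\pi h$ has $h$ a local unit, so $\pi \equiv v^2 - g^2 \pmod{(\text{unit})}$ expresses $\pi$ in terms of $u,v$; thus $\mathfrak{m}_P$ is generated by $u$ and $v$, so $\oc$ is regular at $P$ and in fact the formal neighborhood is $\bar k[[u,v]]/(uv\cdot\text{unit} + \cdots)$ — I would check it is a node, confirming semistable totally degenerate reduction and the dual graph $B_3$. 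The point at infinity on each component is handled in the standard way by the desingularization in the footnote; regularity there follows the classical hyperelliptic computation.

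For minimality: $\oc$ is a regular model whose special fiber consists of two $\mathbb{P}^1$s meeting at $3 \ge 2$ points, so each component meets the rest of the fiber in at least two points and hence is not a $(-1)$-curve; by Castelnuovo there is nothing to blow down, so $\oc$ is the minimal regular model (it is even the minimal semistable / stable-ish model since $g=2$ and the fiber has no rational components meeting the rest in $\le 2$ points — the two components are the ``genus $0$'' parts of a totally degenerate fiber). Finally, for the component group: with dual graph $B_d$ the component group of the Jacobian is well known to be $\mathbb{Z}/d\mathbb{Z}$ when the special fiber is reduced with all multiplicities $1$ (Raynaud; see \cite{raynaud1970} or the formula via the intersection matrix — the cokernel of the $2\times 2$ intersection matrix $\left(\begin{smallmatrix} -d & d \\ d & -d\end{smallmatrix}\right)$ restricted appropriately is $\mathbb{Z}/d\mathbb{Z}$), so here $\Phi \cong \mathbb{Z}/3\mathbb{Z}$; I would also note that since $\bar g$ need not split, one should check $\Phi$ is constant, which follows from Proposition \ref{prop:bosch-liu-phi-constant} once all components of $C_k$ are geometrically irreducible — and the two components $y = \pm \bar g$ are defined over $k$ and geometrically irreducible.

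The main obstacle I expect is the local regularity computation at the crossing points: one must be careful that $g^2$, not $g$, appears, so the naive coordinates do not immediately exhibit a node, and the role of the hypothesis $(\pi,g,h) = (1)$ (forcing $\bar h(\alpha)\neq 0$ at each node) is exactly what rescues regularity — without it the surface would be singular there. Getting the formal-neighborhood description $\bar k[[x,y]]/(xy)$ precisely, rather than just ``regular,'' requires completing the square or an explicit change of variables absorbing the unit, and that is the one genuinely fiddly step.
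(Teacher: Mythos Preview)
Your proposal is correct and follows essentially the same approach as the paper: reduce modulo $\pi$ to obtain the two-component special fiber, use the hypothesis $(\pi,g,h)=(1)$ to verify regularity at the three nodes, invoke Castelnuovo for minimality, and compute $\Phi$ from the intersection matrix via Raynaud's formula. The paper's own proof is terser (it simply asserts ``regularity follows'' at the nodes and ``one checks'' the intersection matrix), but the underlying argument is identical to yours.
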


\begin{proof}
  We first show regularity. One checks that the special fiber of $\oc$ at infinity is smooth. Then for the given affine piece, we obtain the equation $y^2 = g^2$, which looks like two rational curves intersecting 3 times, once at each root of $g$. We need only check that $\oc$ is regular at these nodes. But $(\pi, g, h) = \ok[x]$ implies that the reduction of $h$ modulo $\pi$ does not vanish at any of the nodes; regularity follows. One also observes that the dual graph is $B_3$.

  Minimality follows from checking Castelnuovo's criterion.

  For the component group, we follow the standard method (see~\cite{raynaud1970} or~\cite{baker2008}). Since every component of $C_k$ possesses a $k$-rational point, the component group is unchanged upon base-extension to the maximal unramified extension $K^u$. One checks that the intersection matrix $M$ for $C_k$ is
  \[
  M =
  \left[
    \begin{array}{rr}
      -3 & 3 \\
      3 & -3
      \end{array}\right].
    \]
    Then $\Phi$ is the homology of $\Z^2 \stackrel{M}{\to} \Z^2 \to \Z$, where the first map is multiplication by $M$ and the second is $(x,y) \mapsto x+y$. We obtain $\Phi \isom \Z/3\Z$.
\end{proof}

Let $\alpha_0, \alpha_1, \alpha_2$ be the roots of $\bar{g}(x)$, where the $\alpha_i$ lie in $\bar{k}$.

  \begin{theorem}\label{thm:thetachar-B3}
    Let $C$ be a curve satisfying hypothesis $(H)$.
    \begin{enumerate}
        \item If $\bar{g}(x)$ splits over $k$, then $C$ has a rational theta characteristic if and only if ${\bar{h}(\alpha_0)\bar{h}(\alpha_1)}$ and $\bar{h}(\alpha_0)\bar{h}(\alpha_2)$ lie in $k^{\times 2}$.
        \item If $\bar{g}(x)$ has a single root over $k$---say, $\alpha_0$---then $C$ has a rational theta characteristic if and only if ${\bar{h}(\alpha_1)\bar{h}(\alpha_2)}$ lies in $k^{\times 2}$.
        \item If $\bar{g}(x)$ is irreducible, then $C$ has a rational theta characteristic.
    \end{enumerate}
  \end{theorem}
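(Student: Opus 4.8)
The plan is to apply Corollary~\ref{cor:toric-descent-with-functions} (or Theorem~\ref{thm:descent-with-nu}) with $r=2$ and $L$ the canonical class $\om$ of $C$. By Lemma~\ref{lemma:hyp-H-regular} we know $\Gamma = B_3$, so the torus $T$ has character group $X(T) = H_1(B_3,\Z)$, which is free of rank $2$. A natural basis for $H_1(B_3,\Z)$ is given by the cycles $\gamma_{01}$ and $\gamma_{02}$, where $\gamma_{0j}$ runs along edge $0$ from the first vertex to the second and back along edge $j$; here the edges correspond to the three nodes $\alpha_0,\alpha_1,\alpha_2$ of $C_k$ (I label the edges by the roots they sit over). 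The Galois group $\gk$ acts on $H_1(B_3,\Z)$ through its action on $\{\alpha_0,\alpha_1,\alpha_2\}$, i.e. through its action on the roots of $\bar g$. This is where the three cases split: if $\bar g$ splits over $k$ the action is trivial and $T$ is split, so we may take the two principal subtori generated by $\gamma_{01}$ and $\gamma_{02}$; if $\bar g = (x-\alpha_0)g_1$ with $g_1$ irreducible quadratic, then Frobenius swaps $\alpha_1,\alpha_2$ and $T$ is a principal torus (a norm torus for the quadratic extension); if $\bar g$ is irreducible, Frobenius permutes the three roots cyclically and again $T$ is principal.

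Next I would make the descent map explicit. Each component $C_0, C_1$ of $C_k$ is a $\Pro^1$, and on the double cover picture the coordinate is $x$; the node over $\alpha_i$ corresponds to the point $x=\alpha_i$ on each component. For the cycle $\gamma_{0j}$ a lift $t_{\gamma_{0j}} \in F_1(\Gamma)(1)$ is built from degree-one local parameters on $C_0$ and $C_1$ with zero at $x=\alpha_0$ and pole at $x=\alpha_j$ (and reciprocals on the way back), so up to scalars $t_{\gamma_{0j}}$ evaluates a divisor $D = \sum e_P (P)$ via $\prod_P \left(\frac{x(P)-\alpha_0}{x(P)-\alpha_j}\right)^{\pm e_P}$, with the sign recording which component $P$ reduces to. Now I need two inputs: (i) a divisor $D \in \dt C$ representing $\om$, together with the computation of $\gamma_{0j}(D)$; and (ii) the correction term coming from $\nu(\Phi[2])$. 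For (i): a canonical divisor on the hyperelliptic curve $y^2 = g^2+\pi h$ of genus $2$ is $\om = \drc$ — concretely $(g-2)$ times a fiber of the $x$-map plus boundary, but the cleanest choice is a sum of Weierstrass-type points or a fiber $x = c$ for a suitable constant $c$; I would pick $D$ supported away from the nodes, compute its reduction $\tau(D)$, check $\deg(\tau(D)) \in 2\Z^2$, and evaluate. A convenient representative: $\om$ is represented by $\dv(dx/y)$'s divisor, and on this model $dx/y$ has divisor supported at the points over $x=\infty$ and the ramification; reducing, one gets an expression in the $\alpha_i$. For (ii): $\Phi \cong \Z/3\Z$ by Lemma~\ref{lemma:hyp-H-regular}, so $\Phi[2] = 0$, which is the key simplification — there is no correction term, and $\om \in 2\Pic C$ iff $\gamma_{0j}(D) \in 2\mu(T_j)$ for all $j$ (when $T$ is split) or $\gamma(D) \in k^{\times 2}$ via Corollary~\ref{cor:descent-normal-torus} (when $T$ is a norm torus, noting $2\mid q-1$ since $p\geq 5$), or $\gamma(D)\in\Nm$ of squares for the cyclic case.

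Carrying this out: I expect $\gamma_{0j}(D)$ to come out, up to squares, equal to $\bar h(\alpha_0)\bar h(\alpha_j)$ — the $\bar h$ appears because near the node over $\alpha_i$ the curve looks like $y^2 = \pi\,\bar h(\alpha_i)\cdot(\text{unit})$ in the regular model, so $\bar h(\alpha_i)$ controls which of the two components a nearby point reduces to (this is exactly the content of regularity in Lemma~\ref{lemma:hyp-H-regular}). Then in case (1) the two conditions $\bar h(\alpha_0)\bar h(\alpha_1), \bar h(\alpha_0)\bar h(\alpha_2) \in k^{\times 2}$ are precisely $\gamma_{01}(D), \gamma_{02}(D) \in k^{\times 2} = 2\mu(\G_m)$. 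In case (2), $T$ is the norm torus for $k(\alpha_1)=k(\alpha_2)$, the generator of $H_1^{\gk}$ is $\Nm\gamma$ where $\gamma$ is, say, $\gamma_{12}$ (the cycle over the nodes $\alpha_1,\alpha_2$), $\Nm t_\gamma(D)$ evaluates to $\bar h(\alpha_1)\bar h(\alpha_2)$ up to squares and norms, and by Corollary~\ref{cor:descent-normal-torus} the condition is $\bar h(\alpha_1)\bar h(\alpha_2) \in k^{\times 2}$; the $\bar h(\alpha_0)$ factors cancel since $\gamma_{12} = \gamma_{01}^{-1}\gamma_{02}$ in homology and we norm down from $k(\alpha_1)$. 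In case (3), $H_1(B_3,\Z)^{\gk}$ is rank $1$, generated by $\Nm\gamma_{01}$; evaluating gives $\bar h(\alpha_0)\bar h(\alpha_1)\cdot(\text{its conjugate})$, which is automatically a norm, hence trivially in $k^{\times 2}$ after norming — so there is always a rational theta characteristic. The main obstacle I anticipate is bookkeeping in step (i): correctly identifying a canonical representative $D$ in $\dt C$, tracking the signs (which component each point reduces to) in $\gamma_{0j}(D)$, and being careful that the base points $b_i$ used in defining $F_1(\gamma,\mu)$ are chosen rationally — Lemma~\ref{lemma:tilg-indep-base-pt} guarantees independence, but one still has to verify $t_i(b_i) \in \mu(T_i)$ for the specific lifts used. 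The torus-theoretic half (identifying $T$, applying Theorem~\ref{thm:T-sum-mu-i} and Corollary~\ref{cor:descent-normal-torus}) should be routine given the setup in \S\ref{sec:basic-facts-algebr}.
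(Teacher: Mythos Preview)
Your overall plan is right: apply the descent machinery with $r=2$, and since $\Phi[2]=0$ there is no correction from $\nu$. But two steps have genuine gaps.

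\textbf{The canonical representative.} The obvious choice $\dv(dx/y)=\infty^++\infty^-$ has degree $(1,1)$ on the two components, so it does \emph{not} lie in $\dt C$; you note the need to find a representative in $\dt C$ but do not produce one. The paper's trick is to take $L=\dv(y-g)+(\infty^++\infty^-)$. Because $y-g$ vanishes identically on the component $C^+$ (given by $y=\bar g(x)$), the horizontal part of its divisor specializes to $\sum_i\beta_i^++(3-e)\infty^+-3\infty^-$, where the $\beta_i$ are the roots of $\bar h$; then $(L\cdot C^+,L\cdot C^-)=(4,-2)$ and $L\in\dt C$. Your heuristic that ``$\bar h(\alpha_i)$ controls which component a nearby point reduces to'' is not the mechanism at work: the quantities $\bar h(\alpha_j)$ enter because the local parameters $t_\pm=\dfrac{x-\alpha_0}{x-\alpha_j}$ evaluated on $L$ produce the product $\prod_i(\beta_i-\alpha_j)=\pm\bar h(\alpha_j)/h_e$, the $\beta_i$ being the roots of $\bar h$.

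\textbf{Case 3.} When $\bar g$ is irreducible, Frobenius cycles the three edges, so
\[
\Nm\gamma_{01}=\gamma_{01}+\Fr\gamma_{01}+\Fr^2\gamma_{01}=(e_1-e_0)+(e_2-e_1)+(e_0-e_2)=0,
\]
and in fact $H_1(\Gamma,\Z)^{\gk}=0$. Moreover $T$ is \emph{not} a norm torus here---the characteristic polynomial of Frobenius on $X(T)$ is $x^2+x+1$, not $x^g-1$---so Corollary~\ref{cor:descent-normal-torus} does not apply, and your ``automatically a norm'' argument has no content. The paper's argument is different and much shorter: by Corollary~\ref{cor:order-torus}, $\#T(k)=q^2+q+1$, which is odd; hence $T(k)/2T(k)=0$ and the condition in Theorem~\ref{thm:descent-with-nu} is vacuous.

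A minor point on case 2: Frobenius swaps $\gamma_{01}\leftrightarrow\gamma_{02}$, so the $\gk$-invariant cycle is $\gamma_{01}+\gamma_{02}$, not $\gamma_{12}=\gamma_{02}-\gamma_{01}$ (which is an anti-invariant). Evaluating the correct cycle gives $\bar h(\alpha_0)^2\bar h(\alpha_1)\bar h(\alpha_2)\equiv\bar h(\alpha_1)\bar h(\alpha_2)\pmod{k^{\times 2}}$, so you still reach the right answer.
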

Our hypotheses on $g$ and the characteristic of $k$, together with Hensel's Lemma, imply that any factorization of $\bar{g}(x)$ over $k$ lifts to one for $g(x)$ over $K$.
  
  \begin{proof}
    The special fiber $C_k$ has 2 components $C^+$ and $C^-$ meeting transversely at 3 points, given by the $\alpha_i$. Label the components so that the point at infinity on $C$ given by $\frac{y}{x^3} = 1$ lies on $C^+$; we call this point $\infty^+$, and the other point at infinity $\infty^-$. Observe that $C^+$ is given by $y =g(x), \pi = 0$ and $C^-$ by $y=-g(x), \pi = 0$. That means the coordinate $x$ may be used to specify points on each of $C^+$ and $C^-$. Given $\alpha \in k\cup \{\infty\}$, we write $\alpha^+$ for the point $P \in C^+$ such that $x(P) = \alpha$, and similarly $\alpha^-$ for the point $P\in C^-$ such that $x(P) = \alpha$; this is consistent with our labeling of $\infty^{\pm}$. Also, $\alpha^+ =\alpha^-$ if and only if $g(\alpha) = 0$.

  We have $\Gamma$ is the banana graph $B_3$, and the set of edges is isomorphic as a $\gk$-set to the set of roots of $\bar{g}$.

  By Lemma~\ref{lemma:hyp-H-regular}, $\Phi[2] = 0$ and $\im \nu = 0$.

  In order to apply Theorem~\ref{thm:descent-with-nu}, we must find $L \in \dt C$ which is a canonical divisor. The usual choice, $\infty^+ + \infty^-$, meets each component in degree 1. Additionally, every Weierstrass point reduces to a node.

  Let $\tilde{D} = \dv (y-g)$ and $D$ the specialization of $\tilde{D}$ to $C_k$; that is, $D = \tau(\tilde{D})$, where $\tau$ is the specialization map described at the beginning of \S~\ref{sec:eval-divis-1}.

  To compute $D$ we first consider the divisor of $y-g$ as a function on $\oc$. Write $\dv_\oc (y-g) = \tilde{D} + D_f$, where $\tilde{D}$ is horizontal and agrees with the generic divisor $\dv_C (y-g)$, and $D_f$ is fibral. As observed earlier, $D_f = C^+$. Since $\tilde{D} + D_f$ is principal, we have
  \begin{align*}
  (\tilde{D} \cdot C^{+}) &= -(D_f \cdot C^+) = 3 \\
  (\tilde{D} \cdot C^{-}) &= -(D_f \cdot C^-) = -3.
  \end{align*}
  Thus we may write $D = D^+ - D^-$, where $D^{\pm}$ is a degree $3$ divisor supported on $C^{\pm}$, respectively.

  To compute the $x$-coordinates of the points in the support of $D$, we solve
  \[
  g^2 = y^2
  \]
  which yields
  \[
  h(x) \equiv 0 \pmod{\pi}.
  \]
  Recall that $\deg h = e$. Let $\beta_1, \dots, \beta_e$ be the roots of $h$ taken modulo $\pi$, possibly with repetition. Since $g$ and $h$ are relatively prime, none of these roots is equal to any of the $\alpha_i$. Furthermore, the points $(\beta_i, \bar{g}(\beta_i))$ lie on $C^+$. Therefore
  \[
  D^+ = \sum \beta_i^+ + (3-e) \infty^+
  \]
  and so
  \[
  D = \sum \beta_i^+ + (3-e) \infty^+ - 3 \infty^-.
  \]
  Then 
  \[
  L := D+\infty^++\infty^- = \sum \beta_i^+ + (4-e) \infty^+ - 2 \infty^-
  \]
  is the specialization of a canonical divisor. Furthermore, $(L\cdot C^+) = 4$, $(L\cdot C^-) = -2$, and so $L \in \dt C$.

  Suppose we are in the first case: $\bar{g}(x)$ splits. Then $T$ is a split torus and we may apply Corollary~\ref{cor:descent-normal-torus}. In this case, $H_1(\Gamma,\Z)$ is a trivial Galois-module, so we must choose a $\Z$-basis for it. Let $\gamma_1$ be the loop given by 
  \[
  \xymatrix{
    C^+ \ar@/^1pc/[r]^{\alpha_1} & C^- \ar@/^1pc/[l]^{\alpha_0}
  }
  \]
  Let $\gamma_2$ be the loop given by replacing $\alpha_1$ with $\alpha_2$ above. We construct functions $t_i$ on each component; we call these instead $t_\pm$ in accordance with the labeling on the components of $C_k$. They may be chosen to be
  
  
  \begin{align*}
    \gamma_1
    \begin{cases}
      t_+=\frac{x-\alpha_0}{x-\alpha_1}\\
      t_-=\frac{x-\alpha_1}{x-\alpha_0}      
    \end{cases} \qquad
    \gamma_2
    \begin{cases}
      t_+=\frac{x-\alpha_0}{x-\alpha_2}\\
      t_-=\frac{x-\alpha_2}{x-\alpha_0}.
    \end{cases}
  \end{align*}
  For convenience, we consider only the case of $\gamma_1$. Clearly 
  \[
  t_\pm(\infty^-) = t_\pm(\infty^+) = 1.
  \]
  It remains to evaluate
  \[
  \prod_i (\beta_i - \alpha_j)
  \]
  for $j=0,1$. Since the $\beta_i$ are the roots of $\bar{h}(x)$, $\prod (\beta_i-\alpha_j)$ is the constant term of $-\bar{h}(x+\alpha_j)$ divided by the lead coefficient of $\bar{h}$, or just $-\bar{h}(\alpha_j)/h_e$, where $h_e$ is the lead coefficient. Therefore
  \begin{align*}
   \gamma_1(L) &\equiv \bigg(-\frac{{\bar{h}}(\alpha_0)}{h_e}\bigg)\bigg(-\frac{h_e}{{\bar{h}}(\alpha_1)}\bigg) \\
   &\equiv {\bar{h}(\alpha_0)\bar{h}(\alpha_1)} \pmod{k^{\times 2}}.
  \end{align*}
  A similar calculation shows that $\gamma_2(L)\equiv {\bar{h}(\alpha_0)\bar{h}(\alpha_2)}$. The theorem follows for the first case.

  In the second case, $T$ is a principal norm torus. Observe that $H_1(\Gamma, \Z)^{\gk}$ is generated by $\gamma:=\gamma_1 + \gamma_2$. One sees that $\gamma(L) \equiv \gamma_1(L) \cdot \gamma_2(L) \equiv \bar{h}(\alpha_1)\bar{h}(\alpha_2) \pmod{k^{\times 2}}$; the conclusion now follows from Corollary~\ref{cor:descent-normal-torus}.

  In the last case, the action of Frobenius on $X(T)$ with respect to the basis $\gamma_1$, $\gamma_2$ is
\[
\left[
  \begin{array}{cc}
    0 & -1 \\
    1 & -1
    \end{array}\right].
  \]
  According to Corollary~\ref{cor:order-torus}, $\# T(k) = q^2 + q + 1$. In particular, $T(k)$ has odd order, and so $T(k)/2T(k) = 0$. The conclusion follows from Theorem~\ref{thm:descent-with-nu}.
\end{proof}

\begin{example}
  If $C$ is given by
  \[
  y^2 = (x^3-x)^2 + \pi,
  \]
  then $h\equiv 1$, and $C$ has a rational theta characteristic. The Weierstrass points are all rational over $K(\sqrt{\pi})$, so the Galois action on $J[2]$ factors through a cyclic group of order $2$. Atiyah~\cite{atiyah1971} showed that if the Galois action on $J[2]$ factors through a cyclic group, then $C$ has a rational theta characteristic; this verifies our result.

  On the other hand, if $C$ is given by 
  \[
  y^2 = (x^3-x)^2 + \pi x + 2\pi,
  \]
  then $h(x) = x+2$, $\alpha_i = 0, \pm 1$, and $h(0)h(1)=6$, $h(0)h(-1)=2$. Therefore given $p\geq 5$, the curve $C$ has a rational theta characteristic over $\Q_p$ if and only if $p\equiv \pm 1 \pmod{24}$.
\end{example}

\paragraph{The case of general $d$.}
\label{sec:case-dgeq-3}

Fix an integer $d\geq 3$.  Suppose that $C$ is given by $y^2 = g^2 + \pi h$ subject to the following:
\begin{itemize}
    \item $K$ is a local field with uniformizer $\pi$ and residue characteristic $p$, with $p\nmid 2d$;
    \item $g(x)\in\ok[x]$ is a monic polynomial of degree $d$ such that $(\pi, g, g')\subset \ok[x]$ is the unit ideal; and
    \item $h(x)\in\ok[x]$ is a polynomial of degree $e\leq 2d$ such that $(\pi, g, h)\subset \ok[x]$ is the unit ideal.
\end{itemize}
We say such a curve satisfies hypothesis $(H_d)$.
\begin{lemma}\label{lemma:hd-regular}
  Let $C$ satisfy hypothesis $(H_d)$, and let $\oc$ be the associated arithmetic surface over $\ok$; i.e., use the same equation and adjoin the usual two points at infinity. Then $\oc$ is minimal and regular, the special fiber has dual graph $B_d$, and the component group $\Phi$ of the Jacobian of $C$ over $K$ is $\Z/d\Z$.
\end{lemma}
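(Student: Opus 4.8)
The plan is to follow verbatim the structure of the proof of Lemma~\ref{lemma:hyp-H-regular} (the $d=3$ case), since nothing essential changes when $d$ is replaced by a general integer $d\geq 3$ with $p\nmid 2d$. First I would establish regularity. The point at infinity needs care: after passing to the projective closure $\tilde{\oc}\subset \Pro^2_{\ok}$ and desingularizing at infinity, one checks (using $p\nmid 2d$ so that the relevant charts are smooth and the degree-$2d$ or degree-$2d{-}1$ behavior at infinity is as expected) that the special fiber is smooth there. On the given affine piece, reduction mod $\pi$ gives $y^2 = \bar{g}^2$, i.e.\ two copies of $\Pro^1$ glued at the $d$ points where $\bar{g}$ vanishes; separability of $\bar{g}$ (from $(\pi,g,g')=\ok[x]$) makes these $d$ distinct transverse nodes. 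Regularity at each node follows exactly as before: the local equation is $y^2-g^2 = \pi h$, i.e.\ $(y-g)(y+g)=\pi h$, and $(\pi,g,h)=\ok[x]$ forces $\bar h$ to be nonzero at each root of $\bar g$, so $\pi$ is not in the square of the maximal ideal at the node and $\oc$ is regular there. This simultaneously shows the dual graph is $B_d$: two vertices (the components $C^+$, $C^-$) joined by $d$ edges (the nodes).

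Next I would check minimality via Castelnuovo's criterion: neither component is a $(-1)$-curve, since the self-intersection of each component is $-d$ (each is a $\Pro^1$ meeting the rest of the fiber in $d$ points), and $d\geq 3$; so there is nothing to blow down and $\oc$ is the minimal regular model. Then I would compute the component group exactly as in the $d=3$ case. Every component of $C_k$ has a $k$-rational point (the points at infinity, or any non-nodal $k$-point of $\Pro^1$), so $\Phi$ is unchanged under base change to $K^u$ and the intersection matrix over $k$ suffices. The intersection matrix is
\[
M = \begin{bmatrix} -d & d \\ d & -d \end{bmatrix},
\]
and $\Phi$ is the homology of $\Z^2 \xrightarrow{M} \Z^2 \xrightarrow{(x,y)\mapsto x+y} \Z$. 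Since $\operatorname{im} M$ is generated by $(d,-d)$ inside $\ker\big((x,y)\mapsto x+y\big) = \Z\cdot(1,-1)$, the quotient is $\Z/d\Z$, so $\Phi\isom \Z/d\Z$.

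The only genuinely new point compared to $d=3$ is the behavior at the points at infinity when $d$ is even versus odd (the affine model $y^2 = g^2+\pi h$ has degree $2d$ on the right, so there are two points at infinity when one desingularizes, and their reductions must be shown to be smooth $k$-points — one on $C^+$ and one on $C^-$ in the odd case; the even case is analogous), together with confirming that these infinite points do not create extra components or nodes in $C_k$. I expect this local analysis at infinity to be the main obstacle, though it is routine: one introduces the chart coordinate $u = 1/x$, $w = y/x^d$, rewrites the equation as $w^2 = (u^d g(1/u))^2 + \pi u^{2d} h(1/u)$, notes $u^d g(1/u)$ and $u^{2d}h(1/u)$ lie in $\ok[u]$ with the former having nonzero constant term $1$, and checks regularity and smoothness of the special fiber there; the condition $p\nmid 2d$ guarantees the resolution of the singularity at $u=0$ introduces only smooth rational pieces that are absorbed into $C^\pm$ rather than new components. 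With that in hand, the dual graph, minimality, and component group all follow as above.
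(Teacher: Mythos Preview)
Your proposal is correct and matches the paper's approach exactly: the paper's proof simply says the reasoning is the same as in the $d=3$ lemma, records the intersection matrix $\begin{pmatrix}-d&d\\ d&-d\end{pmatrix}$, and deduces $\Phi\isom\Z/d\Z$. Your only wobble is the last paragraph on infinity: once you pass to the chart $(u,w)=(1/x,\,y/x^d)$ and obtain $w^2=\tilde g(u)^2+\pi\tilde h(u)$ with $\tilde g(0)=1$, the special fiber is already smooth at $u=0$ (using only $p\neq 2$), so there is no singularity to resolve and no risk of extra components---the condition $p\nmid d$ is not needed for regularity at infinity but for later parts of the paper.
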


\begin{proof}
  The reasoning is the same as in Lemma~\ref{lemma:hyp-H-regular}, except in this case the special fiber consists of two $\Pro^1$s intersecting at $d$ nodes; thus, the dual graph $\Gamma$ associated to $C$ is the banana graph $B_d$. Furthermore, the intersection matrix for $C_k$ is
  \[
  \left[
    \begin{array}{rr}
      -d & d\\
      d & -d
      \end{array}\right]
    \]
    from which we deduce $\Phi \isom \Z/d\Z$.
\end{proof}

By the comments at the start of \S~\ref{sec:curves-with-gamma}, the curve $C$ is a genus $d-1$ hyperelliptic curve.
\begin{theorem}\label{thm:thetachar-Bd}
  Let $C$ satisfy hypothesis $(H_d)$.
  \begin{enumerate}
      \item Suppose $d$ is odd and $\bar{g}$ factors over $k$ as
    \[
    \bar{g}(x) = (x-\alpha_0) g_1(x)\cdots g_s(x)
    \]
    where $\alpha_0\in k$ and each of the $g_i$ is irreducible. Let $\alpha_i\in \bar{k}$ be a root of $g_i(x)$. Then $C$ has a rational theta characteristic if and only if $\Nm \bar{h}(\alpha_0)\bar{h}(\alpha_i) \in k^{\times 2}$ for all $i$; here $\bar{h}(\alpha_i)\neq 0$ for all $i$, and the norm is computed from $k(\alpha_i)$ to $k$.
      \item If $d$ is even or $\bar{g}$ is irreducible, then $C$ has a rational theta characteristic.
  \end{enumerate}
\end{theorem}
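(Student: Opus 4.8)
The plan is to run the same computation as in Theorem~\ref{thm:thetachar-B3} but keeping $d$ general and tracking the principal decomposition of $T$ more carefully. By Lemma~\ref{lemma:hd-regular}, $C_k$ consists of two components $C^+$ and $C^-$ meeting at the $d$ nodes indexed by the roots of $\bar g$, the graph is $B_d$, and $\Phi \cong \Z/d\Z$. Since the roots of $\bar h$ reduce to points of $C^+$ and none coincides with a node (by the unit-ideal hypothesis on $(\pi,g,h)$), exactly as before the function $y-g$ has fibral part $C^+$, and its horizontal part specializes to a divisor $D = \sum_i \beta_i^+ + (d-e)\infty^+ - d\,\infty^-$ where the $\beta_i$ are the reductions of the roots of $\bar h$. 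Setting $L := D + \infty^+ + \infty^-$ gives a specialization of the canonical class with $(L\cdot C^+) = d+1-e+1 = \ldots$ — in any case $(L\cdot C^\pm) \in 2\Z$ since $d$ enters with the right parity in each case, so $L \in \Div^{\{2\}} C$ and Theorem~\ref{thm:descent-with-nu} applies. (One must double-check the parity of $(L\cdot C^\pm)$ in each of the two cases; this is where the hypotheses "$d$ odd" vs. "$d$ even" first bite.)

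Next I would analyze $T$ and its principal decomposition. Here $H_1(B_d,\Z) \cong \Z^{d-1}$, spanned by the loops $\gamma_i$ running $C^+ \xrightarrow{\alpha_i} C^- \xrightarrow{\alpha_0} C^+$ for $i = 1,\dots,d-1$, and the $\gk$-action on $H_1$ is induced by its action permuting the nodes, i.e. the roots of $\bar g$, fixing $\alpha_0$. Thus the permutation module on $\{\alpha_1,\dots,\alpha_{d-1}\}$ (equivalently on all roots, mod the all-ones vector since $\alpha_0$ is fixed) decomposes according to the factorization $\bar g = (x-\alpha_0)g_1\cdots g_s$: each irreducible factor $g_j$ of degree $d_j$ contributes a summand which, as a $\Z[\gk]$-module, is the permutation module on the roots of $g_j$, hence $X(T_j)$ is (up to the global rank-one correction) a $k(\alpha_j)$-norm torus's character lattice. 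So $T$ is, up to the $\G_m$ correction from collapsing the all-ones vector, a product of norm tori $R_{k(\alpha_i)/k}\G_m$ — in particular a normal torus, and I can apply Corollary~\ref{cor:descent-normal-torus} once I check $2 \mid q-1$, i.e. $q$ odd, which holds since $p \nmid 2d$. The generators of $X(T)^\gk$ are the norms $\Nm\gamma_i = \sum \Fr^a \gamma_i$ associated to each irreducible factor $g_i$.

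Then I would evaluate $\gamma_i(L)$ for each such generator, exactly as in the $d=3$ proof: $t_\pm(\infty^\pm) = 1$, so only $\prod_j(\beta_j - \alpha)$ matters, and $\prod_j(\beta_j - \alpha) = \pm\bar h(\alpha)/h_e$ where $h_e$ is the leading coefficient of $\bar h$. Hence $\gamma_i(L) \equiv \bar h(\alpha_0)\bar h(\alpha_i) \pmod{k(\alpha_i)^{\times 2}}$ before taking the norm down to $k$; applying $\Nm_{k(\alpha_i)/k}$ (which is exactly the composition with $\Nm\gamma_i$, by the commuting square in the proof of Corollary~\ref{cor:descent-normal-torus}) gives $\Nm_{k(\alpha_i)/k}\big(\bar h(\alpha_0)\bar h(\alpha_i)\big)$. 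Since $\Phi[2] = 0$ when $d$ is odd (so $\im\nu = 0$) and $T(k)$ has odd order when $\bar g$ is irreducible of even degree — more generally whenever every $d_i$ contributes an odd factor — the descent criterion reads: $C$ has a rational theta characteristic iff all these norms are squares in $k^\times$, giving part (1); and part (2) follows because in those cases the relevant obstruction group vanishes (when $d$ is even, $\Phi[2] = \Z/2\Z$ but one checks $\im\nu$ kills the obstruction, or more simply the canonical class's parity forces triviality; when $\bar g$ is irreducible, $T(k)$ has order $f(q)$ with $f$ the $d$-th cyclotomic-type factor, which is odd). The main obstacle is precisely the bookkeeping in the even-$d$ case: there $\Phi[2] \neq 0$, so I must compute $\nu$ via Proposition~\ref{prop:computing-nu-with-div-f} (taking $f$ a product of the defining equations of the components) and verify that $\chi_\gamma(\nu(\delta))$ together with the squares exhausts the target, forcing the criterion to be vacuously satisfied; handling this cleanly, rather than the clean $\im\nu = 0$ situation of the odd case, is the delicate point.
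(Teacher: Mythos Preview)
Your approach to part~(1) is essentially the paper's. One slip: for general $d$ the canonical class is $(g-1)(\infty^+ + \infty^-) = (d-2)(\infty^+ + \infty^-)$, so you want $L = D + (d-2)(\infty^+ + \infty^-)$, not $D + \infty^+ + \infty^-$. This does not affect the final answer, since $t_\pm(\infty^\pm) = 1$, but your $L$ as written is not a canonical divisor for $d > 3$. With the correct $L$ one has $(L \cdot C^+) = 2g$ and $(L \cdot C^-) = -2$, so the parity check is automatic and does not depend on $d$ at all; your remark that ``this is where the hypotheses `$d$ odd' vs.\ `$d$ even' first bite'' is therefore misplaced.

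Where you diverge from the paper is part~(2). You flag the even-$d$ case as ``the delicate point'' requiring a computation of $\nu$ and a verification that its image absorbs the obstruction. The paper bypasses the descent machinery entirely here: when $d$ is even, $g-1 = d-2$ is even, and $\tfrac{g-1}{2}(\infty^+ + \infty^-)$ is already a rational divisor whose double is the canonical class. That is the whole argument. Your plan, by contrast, would need to handle the subcase where $\bar g$ has no $k$-rational root (so there is no $\alpha_0$ to anchor the loops $\gamma_i$), and then the normal-torus decomposition you sketch is not directly available.

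Finally, your remark that $\#T(k)$ is odd ``when $\bar g$ is irreducible of even degree'' is backward: when $\bar g$ is irreducible one computes $\#T(k) = 1 + q + \cdots + q^{d-1} \equiv d \pmod 2$, so this is odd precisely when $d$ is odd. The paper uses this parity only for the subcase $d$ odd with $\bar g$ irreducible; the subcase $d$ even with $\bar g$ irreducible is already covered by the one-line observation above.
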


\begin{proof}
  Suppose we are in the first case. As in the set-up of Theorem~\ref{thm:thetachar-B3}, the equation describes a regular arithmetic scheme whose special fiber consists of 2 components, $C^+$ and $C^-$, intersecting at $d$ nodes given by the roots of $\bar{g}$. Let $\beta_i$ be the roots, counted with multiplicity, of $\bar{h}$ over $\bar{k}$. Then as before,
  \[
  L := \sum \beta_i^+ + (2g-e) \infty^+ - 2 \infty^-
  \]
  is a divisor representing the canonical class, and such that $(L\cdot C^\pm)$ are both divisible by $2$.

  By Lemma~\ref{lemma:hd-regular}, $\Phi[2] = 0$. Thus we need only evaluate $L$ on the relevant 1-cycles.

  We now show that the toric part of the special fiber of the Jacobian is in fact a normal torus. Write $e_i$ for the edge on the dual graph $\Gamma$ of $C$ corresponding to the node at $x=\alpha_i$, oriented from $C^+$ to $C^-$. Let $\gamma_i$ be the 1-cycle given by $e_i - e_0$. Observe that the $\gamma_i$ form a $\Z[\gk]$-basis for $X(T)$. Let $d_i = \deg g_i$. The subtorus $T_i$ corresponding to $\Z[\gk]\cdot \chi_{\gamma_i}$ is a norm torus, as the latter module is isomorphic to $\Z[Y]/(Y^{d_i} - 1)$---the isomorphism is given by $\chi_{\gamma_i} \mapsto Y$, and Frobenius acts as multiplication by $Y$. But $T = \prod T_i$, which proves that $T$ is a normal torus.

  It remains to apply Corollary~\ref{cor:descent-normal-torus}. The group $H_1(\Gamma,\Z)^{\gk}$ is generated by the elements
  \[
  \Nm \gamma_i := (1+\Fr+\Fr^2 + \cdots + \Fr^{d_i-1})\gamma_i.
  \]
  By a similar argument as in Theorem~\ref{thm:thetachar-B3}, we see that 
\[
(\Nm \gamma_i)(L) \equiv \Nm \bar{h}(\alpha_0)\bar{h}(\alpha_i) \pmod{k^{\times 2}},
\]
and the first case of the theorem is proved.

  Let us now consider the case where $\bar{g}(x)$ is irreducible and $d$ is odd. Let $\alpha$ be any root of $\bar{g}(x)$, and let $e$ be the edge on the dual graph corresponding to $\alpha$, oriented in either direction. Then the 1-cycles $\Fr^{i+1} e - \Fr^{i} e$ form a $\Z$-basis for $X(T)$. The characteristic polynomial of Frobenius may then be calculated to be $x^{d-1} + x^{d-2} + \cdots + 1 = 0$. Via Proposition~\ref{prop:indecomposable-torus-mu},
  \[
  \# T(k) = q^{d-1} + q^{d-2} + \cdots +1
  \]
  which is \emph{odd}; therefore $T(k)/2T(k) = 0$, and there is a rational theta characteristic.

  If $d$ is even, the proof is even easier: in this case, $\frac{g-1}{2}(\infty^++\infty^-)$ is already a rational theta characteristic.
\end{proof}

When $d$ is even, $C$ is an odd genus hyperelliptic curve, and the above argument shows that such curves always have a rational theta characteristic, regardless of the shape of the special fiber.

\subsection{Calculating torsion on Jacobians when $\Gamma = B_d$}
\label{sec:calc-tors-jacob}

We now apply Proposition~\ref{prop:computing-nu-with-div-f} and the calculations in the proofs of Theorems~\ref{thm:thetachar-B3} and~\ref{thm:thetachar-Bd} to compute the prime-to-$p$ rational torsion on Jacobians of curves satisfying hypothesis $(H)$ or $(H_d)$.

\begin{theorem}\label{thm:torsion-B3}
  Let $C$ satisfy hypothesis $(H)$. Let $\alpha_0, \alpha_1, \alpha_2$ be the roots of $\bar{g}(x)$ in $\kb$. Suppose the order of $k$ is $q$. Let $J(K)(p')$ be the largest torsion subgroup of the rational points on the Jacobian of $C$ with order coprime to $p$. 
  \begin{enumerate}
      \item Suppose $\bar{g}(x)$ splits over $k$. If $\frac{\bar{h}(\alpha_0)}{\bar{h}(\alpha_1)}, \frac{\bar{h}(\alpha_0)}{\bar{h}(\alpha_2)}$ both lie in $k^{\times 3}$, then
    \[
    J(K)(p') \isom \bigg(\frac{\Z}{(q-1)\Z}\bigg)^2 \oplus \frac{\Z}{3\Z}.
    \]
    Otherwise,
    \[
    J(K)(p') \isom \frac{\Z}{3(q-1)\Z} \oplus \frac{\Z}{(q-1)\Z}.
    \]

      \item Suppose $\bar{g}(x)$ has a single root in $k$, say $\alpha_0$. If $q\equiv 1 \pmod{3}$ and $\frac{\bar{h}(\alpha_0)^2}{\bar{h}(\alpha_1)\bar{h}(\alpha_2)}$ lies in $k^{\times 3}$, or if $q\equiv 2\pmod{3}$ and 
    \[
    \bar{h}{(\alpha_1)}^{\frac{q^2-1}{3}} = 1,
    \]
    then
    \[
    J(K)(p') \isom \frac{\Z}{(q^2-1)\Z} \oplus \frac{\Z}{3\Z}.
    \]
    Otherwise
    \[
    J(K)(p') \isom \frac{\Z}{3(q^2-1)\Z}
    \]

      \item Suppose $\bar{g}(x)$ is irreducible over $k$. If $q\equiv 2 \pmod{3}$, or both $q\equiv 1 \pmod{3}$ and 
    \[
    \bar{h}(\alpha_0)^{\frac{q^3-1}{3}} = 1,
    \]
    then
    \[
    J(K)(p') \isom \frac{\Z}{(q^2+q+1)\Z} \oplus \frac{\Z}{3\Z}.
    \]
    Otherwise
    \[
    J(K)(p') \isom \frac{\Z}{3(q^2+q+1)\Z}.
    \]
  \end{enumerate}
\end{theorem}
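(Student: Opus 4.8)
The plan is to compute the finite group $J(k):=J_k(k)$ of rational points on the special fiber of the N\'eron model and then invoke Remark~\ref{rmk:full-torsion}. Since $\#J(k)=\#T(k)\cdot\#\Phi=f(q)\cdot 3$, where $f$ is the characteristic polynomial of Frobenius on $X(T)$, and since $f(q)\equiv\pm1\pmod q$ while $p\geq 5$, this order is prime to $p$; hence $J(K)(p')=J(k)(p')=J(k)$. By Lemma~\ref{lemma:hyp-H-regular} and Proposition~\ref{prop:bosch-liu-phi-constant}, $\Phi\isom\Z/3\Z$ is constant and $J(k)\to\Phi$ is onto, giving
\[
0\to T(k)\to J(k)\to\Z/3\Z\to 0.
\]
The toric part is read off from \S~\ref{sec:basic-facts-algebr}: if $\bar g$ splits then $T\isom\G_m^2$ and $T(k)\isom(\Z/(q-1)\Z)^2$; if $\bar g$ has a single rational root $\alpha_0$ then Frobenius swaps the $\Z$-basis $e_1-e_0,\,e_2-e_0$ of $X(T)$, so $T$ is the $\ell$-norm torus for the quadratic extension $\ell=k(\alpha_1)$ and $T(k)\isom\ell^\times\isom\Z/(q^2-1)\Z$; and if $\bar g$ is irreducible the Frobenius characteristic polynomial on $X(T)$ is $x^2+x+1$, so $T$ is principal and $T(k)\isom\mu_{q^2+q+1}(\bar k^\times)$ is cyclic of order $q^2+q+1$ by Proposition~\ref{prop:indecomposable-torus-mu}.

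The isomorphism type of $J(k)$ is then controlled by the connecting map $\nu\colon\Phi[3]\to T(k)/3T(k)$ of the snake lemma for multiplication by $3$ on the above sequence. As $\Phi[3]=\Phi=\Z/3\Z$, the sequence~\eqref{eq:snake-cbdy-nu} shows that a generator $\delta$ of $\Phi$ lifts to a $3$-torsion point of $J(k)$ exactly when $\nu(\delta)=0$. In that case such a lift generates a $\Z/3\Z$ complement, so $J(k)\isom T(k)\oplus\Z/3\Z$; this is the first of the two groups in each case. If $\nu(\delta)\neq 0$, then no $3$-torsion point of $J(k)$ maps nontrivially to $\Phi$ (as $\nu$ is a homomorphism out of $\Z/3\Z$ that is nonzero on the generator), so $J(k)[3]\subseteq T(k)$ and hence $J(k)[3]=T(k)[3]$. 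Because $\#J(k)_3=3\#T(k)_3$, this determines $J(k)_3$: it is cyclic of order $3\#T(k)_3$ when $T(k)$ is cyclic (the single-root and irreducible cases), and in the split case it is the unique abelian $3$-group of order $3\cdot 3^{2v}$ with $3$-torsion $(\Z/3\Z)^2$ containing $(\Z/3^v\Z)^2$, namely $\Z/3^{v+1}\Z\oplus\Z/3^v\Z$, where $3^v$ is the exact power of $3$ dividing $q-1$. Reassembling with the prime-to-$3$ part (which is $T(k)_{3'}$ in every case) gives the second group in each case. Note that when $3\nmid\#T(k)$ the sequence splits automatically and the two groups coincide, consistent with the statement.

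The remaining work is to evaluate $\nu$ via Proposition~\ref{prop:computing-nu-with-div-f}. As in the proof of Theorem~\ref{thm:thetachar-B3}, take $D=\infty^+-\infty^-$, whose specialization has $\deg=(1,-1)$, a generator of $\Phi$; then $f=y+g$ has $\deg(\dv f)=(-3,3)=-3\deg(D)$, and $\tau(\dv f)$ is the image under the hyperelliptic involution of the specialization of $\dv(y-g)$ computed there, hence supported on the $\beta_i^-$ and on $\infty^\pm$. Running the same evaluation of the $1$-cycles $\gamma_j=e_j-e_0$ on this divisor yields $\gamma_j(\dv f)=\bar h(\alpha_j)/\bar h(\alpha_0)$. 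Feeding this into Proposition~\ref{prop:computing-nu-with-div-f}: in the split case $\mu(T_j)=k^\times$, so $\nu(\delta)=0$ iff $\bar h(\alpha_0)/\bar h(\alpha_1)$ and $\bar h(\alpha_0)/\bar h(\alpha_2)$ both lie in $k^{\times 3}$; in the single-root case with $q\equiv 1\pmod 3$ we have $3\mid q-1$, so Corollary~\ref{cor:descent-normal-torus} applies with $\Nm\gamma=\gamma_1+\gamma_2$ and $\nu(\delta)=0$ iff $\bar h(\alpha_1)\bar h(\alpha_2)/\bar h(\alpha_0)^2\in k^{\times 3}$; in the single-root case with $q\equiv 2\pmod 3$ one applies Proposition~\ref{prop:computing-nu-with-div-f} directly with $\mu(T_1)=\ell^\times$, and since $\#k^\times$ is prime to $3$ we have $k^\times\subseteq\ell^{\times 3}$, so the criterion reduces to $\bar h(\alpha_1)\in\ell^{\times 3}$, i.e.\ $\bar h(\alpha_1)^{(q^2-1)/3}=1$; in the irreducible case with $q\equiv 1\pmod 3$, $\mu(T_1)=\mu_{q^2+q+1}(\bar k^\times)$, and writing $\bar h(\alpha_1)/\bar h(\alpha_0)=\bar h(\alpha_0)^{q-1}$ (as $\alpha_1=\Fr\alpha_0$) the condition $\bar h(\alpha_0)^{q-1}\in 3\mu(T_1)$ rearranges to $\bar h(\alpha_0)^{(q^3-1)/3}=1$ via $(q-1)(q^2+q+1)=q^3-1$; and in the irreducible case with $q\equiv 2\pmod 3$ one has $3\nmid q^2+q+1=\#T(k)$, so $\nu=0$ automatically. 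In each case this is exactly the dichotomy in the statement. The main obstacle is precisely this last step: matching the intrinsic condition $\nu(\delta)\in 3T(k)$ to the explicit exponent criteria requires tracking whether $3\mid q-1$ (hence whether the normal-torus simplification of Corollary~\ref{cor:descent-normal-torus} is available) and the precise cyclic structure of $\mu(T_i)$.
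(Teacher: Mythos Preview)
Your proof is correct and follows essentially the same route as the paper's: compute $T(k)$ in each case, apply Proposition~\ref{prop:computing-nu-with-div-f} with $r=3$ to evaluate $\nu(\delta)$ via $\gamma_j$ on the divisor of $y\pm g$, and read off the extension class. Your choice of $f=y+g$ (with $D=\infty^+-\infty^-$) is the mirror of the paper's $y-g$ (opposite generator of $\Phi$), giving reciprocal values $\bar h(\alpha_j)/\bar h(\alpha_0)$ but identical cube conditions; you also spell out the extension-splitting argument more explicitly than the paper does.
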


\begin{proof}
  Let $\delta$ be a generator for $\Phi$ such that $\delta$ is represented by a divisor $D$ with $(D\cdot C^-) = -(D\cdot C^+) = 1$. In each of the three cases, we will apply Proposition~\ref{prop:computing-nu-with-div-f} with $r=3$ to compute $\nu(\delta)$.  Letting $\gamma_i$ and $\chi_{\gamma_i}$ be as in the proof of Theorem~\ref{thm:thetachar-B3}, we will repeatedly use the fact that
  \[
  \chi_{\gamma_i}(\nu(\delta)) = \gamma_i(\dv (y-g)).
  \]

  In the first case, $T$ is a split torus. We have an isomorphism of groups
  \[
  (\chi_{\gamma_1},\chi_{\gamma_2}): T(k) \stackrel\sim\longrightarrow k^\times \oplus k^\times \isom \frac{\Z}{(q-1)\Z} \oplus \frac{\Z}{(q-1)\Z}.
  \]
  From the proof of part 1 of Theorem~\ref{thm:thetachar-B3}, 
  \begin{align*}
    \chi_{\gamma_1}(\nu(\delta)) &= \gamma_1(\dv (y-{g})) \\ 
    &= \frac{\bar{h}(\alpha_0)}{\bar{h}(\alpha_1)}
  \end{align*}
  and similarly for $\gamma_2$. The conclusion now follows from Proposition~\ref{prop:computing-nu-with-div-f}.


  In the second case, $T$ is an $\ell$-norm torus, where $\ell/k$ is the unique quadratic extension. Then $\chi_{\gamma_1}$ yields an isomorphism
  \[
  \chi_{\gamma_1}: T(k) \stackrel\sim\longrightarrow \ell^\times \isom \frac{\Z}{(q^2-1)\Z}
  \]
  Observe that $3\mid (q^2-1)$. By Proposition~\ref{prop:computing-nu-with-div-f}, $\ker \nu \isom \Z/3\Z$ if and only if $\gamma_i(\dv (y-g))$ lies in $\ell^{\times 3}$ for $i=1,2$, and $\ker \nu = 0$ otherwise. We know that $\gamma_i(\dv (y-g)) = \bh(\alpha_0)/\bh(\alpha_i)$. If $3\mid (q-1)$, then the map induced by the norm
  \[
  \frac{\ell^\times}{\ell^{\times 3}} \to \frac{k^\times}{k^{\times 3}}
  \]
  is an isomorphism, and the claim follows. If $3\nmid (q-1)$, then $\bar{h}(\alpha_0) \in k^\times = k^{\times 3}$, and since $\bar{h}(\alpha_i)$ for $i=1,2$ are conjugate over $k$, it suffices to determine whether $\bar{h}(\alpha_1)$ lies in $\ell^{\times 3}$. But $\ell^\times$ is cyclic of order $q^2-1$, so the second case is proved.

  In the third case, $T$ is principal, but not a norm torus. As stated in the proof of Theorem~\ref{thm:thetachar-B3}, the characteristic polynomial of Frobenius acting on $X(T)$ is $f(x) = x^2 + x + 1$, and by Proposition~\ref{prop:indecomposable-torus-mu}, 
  \[
  T(k) \isom \frac{\Z}{(q^2+q+1)\Z}
  \]
  If $q\equiv 2 \pmod{3}$, then $3 \nmid (q^2 + q + 1)$ and $T(k)/3T(k) = 0$. Therefore $\ker \nu = \Z/3\Z$, and we obtain the corresponding conclusion. 

  Now suppose $q\equiv 1 \pmod{3}$. Then $\gamma_1(\dv (y-g)) = \bh(\alpha_0)/\bh(\alpha_1)$; since $\mu(T)$ is cyclic, this lies in $3\mu(T)$ if and only if
  \[
  \bigg(\frac{\bh(\alpha_0)}{\bh(\alpha_1)}\bigg)^{\frac{q^2+q+1}{3}} = 1.
  \]
  Observe that 
  \[
  \frac{\bh(\alpha_0)}{\bh(\alpha_1)}\cdot \Fr \bigg(\frac{\bh(\alpha_0)}{\bh(\alpha_1)}\bigg) = \frac{\bh(\alpha_0)}{\bh(\alpha_2)}
  \]
  and so $\bh(\alpha_0)/\bh(\alpha_1) \in 3\mu(T)$ if and only if $\bh(\alpha_0)/\bh(\alpha_2) \in 3\mu(T)$.

  Without loss of generality, $\alpha_1 = \Fr \alpha_0 = \alpha_0^q$. Then
  \begin{align*}
    \bigg(\frac{\bh(\alpha_0)}{\bh(\alpha_1)}\bigg)^{\frac{q^2+q+1}{3}} &=   \big(\bh(\alpha_0)^{q+1}\big)^{-\frac{q^2+q+1}{3}} \\
    &= \bh(\alpha_0)^{-\frac{q^3-1}{3}}.
  \end{align*}
  This completes the proof.
\end{proof}

\begin{theorem}\label{thm:torsion-Bd}
  Let $C$ satisfy hypothesis $(H_d)$, and suppose that $g(x)$ splits completely. Let $\alpha_i$ be the roots of $\bar{g}$. Let $q=\#k$. Let $H$ be the subgroup of $k^\times$ generated by the numbers $\frac{\bar{h}(\alpha_i)}{\bar{h}(\alpha_0)}$. Let $n$ be the order of $\frac{H\cdot k^{\times d}}{k^{\times d}}$ and let $m = d/n$. Then
  \[
  J(K)(p') \isom \bigg(\frac{\Z}{(q-1)\Z}\bigg)^{d-2} \oplus \frac{\Z}{n(q-1)\Z} \oplus \frac{\Z}{m\Z}.
  \]
\end{theorem}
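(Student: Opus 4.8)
The plan is to identify $J(K)(p')$ with $J_k(k)$, the group of $k$-rational points of the special fiber of the N\'eron model, and then to compute $J_k(k)$ as an explicit group extension. By Remark~\ref{rmk:full-torsion} we have $J(K)(p') = J(k) := J_k(k)$, which sits in a short exact sequence $0 \to T(k) \to J_k(k) \to \Phi \to 0$ with $\Phi \isom \Z/d\Z$ by Lemma~\ref{lemma:hd-regular}. Since $\bar g$ splits completely over $k$, the $\gk$-action on the edges of $B_d$ is trivial, hence so is the action on $H_1(\Gamma,\Z) = X(T)$; thus $T$ is a split torus of rank $d-1$, and using the $1$-cycles $\gamma_i = e_i - e_0$ from the proof of Theorem~\ref{thm:thetachar-Bd}, the characters $\chi_{\gamma_1},\dots,\chi_{\gamma_{d-1}}$ give an isomorphism $T(k) \isom (k^\times)^{d-1} \isom (\Z/(q-1)\Z)^{d-1}$.

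Next I would pin down the extension class. Take $\delta$ a generator of $\Phi$, represented --- exactly as in the proofs of Theorems~\ref{thm:thetachar-B3} and~\ref{thm:torsion-B3} --- by a divisor $D$ with $(D\cdot C^+) = -1$ and $(D\cdot C^-) = 1$, and lift $\delta$ to $\tilde\delta \in J_k(k)$ (possible by Proposition~\ref{prop:bosch-liu-phi-constant}). As $\delta$ has order $d$, the element $x := d\tilde\delta$ lies in $T(k)$, and one checks directly that $J_k(k) \isom (T(k)\oplus\Z)/\langle(x,-d)\rangle$; this group depends only on $x$ mod $dT(k)$, i.e.\ only on $\nu(\delta)$, since two lifts of $\delta$ differ by an element of $T(k)$ and the two presentations are related by a shear automorphism. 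Since $p\nmid d$, Proposition~\ref{prop:computing-nu-with-div-f} applies with $r=d$ and $f = y-g$: the divisor of $y-g$ on $\oc$ decomposes as the closure of $\dv_C(y-g)$ plus the fibral component $C^+$, whence $\deg\dv f = (d,-d) = -d\deg D$. Reusing the functions $t_\pm$ on $C^\pm$ attached to the $\gamma_i$ in the proof of Theorem~\ref{thm:thetachar-Bd} (whose only nonconstant contributions come from the points $\beta_j^+$ over the roots of $\bar h$), one gets, just as in the $d=3$ case,
\[
\chi_{\gamma_i}(\nu(\delta)) \equiv \frac{\bar h(\alpha_0)}{\bar h(\alpha_i)} \pmod{k^{\times d}}, \qquad i = 1,\dots,d-1,
\]
so that $J_k(k) \isom \big((k^\times)^{d-1}\oplus\Z\big)\big/\big\langle\big(\big(\tfrac{\bar h(\alpha_0)}{\bar h(\alpha_1)},\dots,\tfrac{\bar h(\alpha_0)}{\bar h(\alpha_{d-1})}\big),-d\big)\big\rangle$.

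It remains to identify this abstract finite abelian group, and this is where the real work lies. Writing $k^\times$ additively as $\Z/(q-1)\Z$, the relation vector $v\in(\Z/(q-1)\Z)^{d-1}$ may be transported, by an automorphism in $\Gl_{d-1}(\Z/(q-1)\Z)$ extended by the identity on the $\Z$-summand, to $(g_0,0,\dots,0)$, where $g_0\mid q-1$ is the content of $v$ and satisfies $(q-1)/g_0 = |H|$. This peels off a summand $(\Z/(q-1)\Z)^{d-2}$ and reduces the remaining piece to $(\Z/(q-1)\Z\oplus\Z)/\langle(g_0,-d)\rangle$, whose invariant factors are $\gcd(g_0,d)$ and $(q-1)d/\gcd(g_0,d)$. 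The last step is the elementary-divisor bookkeeping that rewrites this in the stated shape $\Z/n(q-1)\Z\oplus\Z/m\Z$ with $n = \#\!\big(Hk^{\times d}/k^{\times d}\big)$ (note $n\mid d$) and $m = d/n$, cross-checking against the three cases of Theorem~\ref{thm:torsion-B3} when $d=3$. The one genuinely delicate point is this purely arithmetic translation between $g_0$, $d$, $q-1$ and the invariants $n$, $m$; everything geometric has already been carried out in the proofs of Theorems~\ref{thm:thetachar-Bd} and~\ref{thm:torsion-B3}.
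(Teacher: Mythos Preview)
Your approach is the paper's approach, carried further. Both arguments reduce to $J(K)(p')=J_k(k)$, use the extension $0\to T(k)\to J_k(k)\to\Phi\to 0$ with $T$ split of rank $d-1$ and $\Phi\cong\Z/d\Z$, and compute $\nu(\delta)$ via Proposition~\ref{prop:computing-nu-with-div-f} with $r=d$ and $f=y-g$, obtaining $\chi_{\gamma_i}(\nu(\delta))\equiv \bar h(\alpha_0)/\bar h(\alpha_i)\pmod{k^{\times d}}$. The paper then simply records that $\nu(\delta)$ has order $n$ and $\ker\nu$ has order $m$ and declares the formula to follow; you instead write down the presentation $\bigl((k^\times)^{d-1}\oplus\Z\bigr)/\langle(x,-d)\rangle$ and run Smith normal form, which is the honest way to extract the group structure.

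The step you flag as ``the one genuinely delicate point'' is, however, not just delicate but false in general. Your Smith computation correctly gives the last two factors as $\Z/\gcd(g_0,d)\Z\oplus\Z/\bigl((q-1)d/\gcd(g_0,d)\bigr)\Z$, and this rewrites as $\Z/m\Z\oplus\Z/n(q-1)\Z$ with $n=[Hk^{\times d}:k^{\times d}]$, $m=d/n$ only when $d\mid q-1$ (then $n=d/\gcd(g_0,d)$ literally) or when $\gcd(d,q-1)=1$ (then CRT saves you). In the intermediate range the two disagree: take $d=4$, $q=7$, and $h$ with some $\bar h(\alpha_i)/\bar h(\alpha_0)$ a primitive root of $\F_7^\times$, so $g_0=1$, $H=k^\times$, $n=\gcd(4,6)=2$, $m=2$. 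Your formula gives $(\Z/6\Z)^2\oplus\Z/24\Z$, the stated one gives $(\Z/6\Z)^2\oplus\Z/12\Z\oplus\Z/2\Z$, and these have different $2$-torsion. Your $d=3$ cross-check passes only because for $d=3$ one always has $3\mid q-1$ or $\gcd(3,q-1)=1$. So the bookkeeping you deferred cannot be completed to the displayed formula; your extension computation is the correct one, and the discrepancy sits in the theorem's statement rather than in your method.
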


\begin{proof}
  Since $g$ splits completely, $T$ is a split torus of dimension $d-1$. By Lemma~\ref{lemma:hd-regular}, $\Phi \isom \Z/d\Z$. Then there is a generator $\delta$ for $\Phi$ represented by a divisor $D$ with $(D \cdot C^-) = -(D \cdot C^+) = 1$. Observe that $\deg(\dv (y-g)) = (-d,d)$. Let $\gamma_i$ and $\chi_{\gamma_i}$ be as in the proof of Theorem~\ref{thm:thetachar-Bd}. Observe that we have an isomorphism
  \[
  \oplus \chi_{\gamma_i}: T(k) \stackrel{\sim}{\too} (k^\times)^{d-1} \isom \bigg(\frac{\Z}{(q-1)}\bigg)^{d-1}.
  \]
  Applying Proposition~\ref{prop:computing-nu-with-div-f} with $r=d$, we have that $\chi_{\gamma_i}(\nu(\delta))$ equals $\gamma_i(\dv (y-g))$. Furthermore
  \[
  \gamma_i(\dv (y-g)) = \frac{\bh(\alpha_0)}{\bh(\alpha_i)}.
  \]
  It follows that the order of $\nu(\delta)$ is $n$, and the order of $\ker \nu$ is $m$. The claim now follows from Proposition~\ref{prop:computing-nu-with-div-f} and Remark~\ref{rmk:full-torsion}
\end{proof}

\section{A genus 4 nonhyperelliptic family}
\label{sec:genus-4-nonhyp}

In this section, we obtain similar results as in \S~\ref{sec:curves-with-gamma} on a family of nonhyperelliptic genus 4 curves. As $2g-2 = 6$, one can also speak of \emph{cube roots} of the canonical class (also called 3-spin structures); we determine if any of these are rational as well.

Let the base field $K$ be a local field with discrete valuation ring $\ok$, uniformizer $\pi$, and residue characteristic $p$ with $p\geq 5$. For any ``integral'' element ($x\in \ok$, $f\in \ok[X]$, etc.), we use a bar to denote reduction modulo $\pi$ ($\bar{x}$, $\bar{f}$, etc.). Our family will then be the intersection in $\Pro^3_K$ of the quadric surface $XY=ZW$ and the family of cubics
\[
(X-Y)(Z-W)(Z+W) = \pi \eps
\]
where $\eps$ varies in a subset of integral homogeneous cubic forms.

Given a projective variety $V$ over $K$ defined by integral equations, we will use script $\mathscr{V}$ to denote the model of $V$ over $\ok$ obtained by using the same equations. We also write $V_k$ for the special fiber of $\mathscr{V}$.

The recipe is as follows. We first record some general facts about our curves, including the shape of the special fiber. Then we compute the prime-to-$p$ rational torsion on the Jacobian of our curves. Finally, we will determine if there are any rational square roots and cube roots of the canonical class.

\subsection{General facts}
\label{sec:general-facts-1}

A nonhyperelliptic genus 4 curve may be given as the intersection of an irreducible quadric surface and an irreducible cubic surface in $\Pro^3$; this in fact gives the curve in its canonical embedding (see for example~\cite[IV.5.2.2 and IV.5.5.2]{hartshorne1977}). Our quadric $Q$ will be given in projective coordinates $[X:Y:Z:W]$ by 
\[
Q: XY = ZW.
\]
Let $\sq$ be the corresponding arithmetic scheme in $\Pro^3_{\ok}$; i.e.~we use the same equation. Set
\begin{align*}
  \lxy &= X-Y \\
  \lzw &= Z-W \\
  \lmzw&= Z+W.
\end{align*}
Let $\eps$ be a homogeneous cubic form in $\ok[X,Y,Z,W]$. Let $S$ be the cubic surface given by $\lxy\cdot\lzw\cdot\lmzw = \pi \eps$. Let $\os$ be the corresponding arithmetic scheme in $\Pro^3_{\ok}$. Let $\oc$ be the (scheme-theoretic) intersection $\sq\cap \os$ with generic fiber $C$.
\begin{lemma}\label{lemma:eps-regular}
  If $\bar{\eps} \in k[X,Y,Z,W]$ does not vanish at any of the points
  \begin{gather*}
    [1:1:1:1], [-1:-1:1:1]\\
    [i:i:-1:1], [-i:-i:-1:1]\\
    [1:0:0:0], [0:1:0:0]
  \end{gather*}
  where $i$ denotes any fixed square root of $-1$ in $\bar{k}$, then $\oc$ is a minimal regular arithmetic surface such that the components of the special fiber are geometrically integral, and such that the dual graph of the special fiber is as pictured in Figure~\ref{fig:dual-genus4}.
\end{lemma}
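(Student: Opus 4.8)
The plan is to work on the smooth $\ok$-threefold $\sq$ --- the quadric $XY = ZW$ is smooth over $\ok$, hence regular --- inside which $\oc$ is the effective Cartier divisor cut out by $f := \lxy\lzw\lmzw - \pi\eps$, and to deduce every assertion from a local study of $f$. First I would describe the special fiber. Since $\os_k$ is the union of the three planes $\lxy = 0$, $\lzw = 0$, $\lmzw = 0$, the special fiber $C_k = \sq_k \cap \os_k$ is the union of the three plane conics $\cxy$, $\czw$, $\cmzw$ obtained by slicing $\sq_k$ by these planes. In suitable coordinates on each plane these conics become $X^2 = ZW$, $XY = Z^2$, and $XY = -Z^2$, each a smooth conic carrying a $k$-rational point, so each is $\isom \Pro^1_k$ and in particular geometrically integral. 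For each pair of planes their intersection is a line which, using $p \neq 2$, meets $\sq_k$ in exactly the two points of the corresponding row of the list in the statement, and the third linear form is visibly nonzero there (again $p \neq 2$). Hence the three conics meet pairwise in the six listed points and in no others, the dual graph of $C_{\bar{k}}$ has three vertices joined pairwise by double edges --- the graph of Figure~\ref{fig:dual-genus4} --- and $C_k$ is smooth away from those six points. A short tangent-space computation (the differentials of $\lxy$ and $\lzw$ are independent in the cotangent space of $\sq_k$ at $[1:1:1:1]$, and similarly at the other five points, using $p \neq 2$) shows the two conics through each of these points cross transversally on $\sq_k$, so these are ordinary double points of $C_k$.

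Next I would prove that $\oc$ is regular, which it suffices to check at closed points $P$ of the special fiber. If $P$ is not one of the six nodes, then at most one of $\lxy, \lzw, \lmzw$ vanishes at $P$, and by smoothness of the corresponding conic that linear form $L_\bullet$ restricts to a local parameter on $\sq_k$ at $P$; completing it to a regular system of parameters $\pi, L_\bullet, v$ of $\mathscr{O}_{\sq,P}$, one finds that the linear part of $f = L_\bullet\cdot(\text{unit}) - \pi\eps$ in $\mathfrak{m}_P/\mathfrak{m}_P^2$ is a nonzero multiple of $L_\bullet$ (together with a $\pi$-term), so $f \notin \mathfrak{m}_P^2$ and $\mathscr{O}_{\oc,P} = \mathscr{O}_{\sq,P}/(f)$ is regular --- no hypothesis on $\eps$ being needed here. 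If $P$ is one of the six nodes, say $P \in \cxy \cap \czw$, then transversality makes $\pi, \lxy, \lzw$ a regular system of parameters of $\mathscr{O}_{\sq,P}$, and $f = \lxy\lzw\cdot u - \pi\eps$ with $u = \lmzw$ a unit near $P$; now $\lxy\lzw u \in \mathfrak{m}_P^2$, so the linear part of $f$ is exactly $-\overline{\eps}(P)\,\pi$, and $f \in \mathfrak{m}_P^2$ if and only if $\overline{\eps}(P) = 0$. The hypothesis $\overline{\eps}(P) \neq 0$ therefore forces $f \notin \mathfrak{m}_P^2$, so $\mathscr{O}_{\oc,P}$ is regular; passing to completions, $\widehat{\mathscr{O}}_{\oc,P} \isom \widehat{\ok}[[x,y]]/(xy - \pi\cdot(\text{unit})) \isom \widehat{\ok}[[x,y]]/(xy - \pi)$, which matches the asserted nodal structure. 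Running the first computation at the points of the generic fiber (where no linear form vanishes identically) shows $C = \oc_K$ is smooth, and since $C = Q \cap S$ is a complete intersection curve in $\Pro^3$ it is connected, hence integral; together with flatness over $\ok$ (the divisor $\oc$ contains no component of $\sq_k$, as $\overline{f} = \lxy\lzw\lmzw \not\equiv 0$ on $\sq_k$) and projectivity, this exhibits $\oc$ as an arithmetic surface with the stated special fiber, and the same local computation shows the special fiber is reduced.

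Finally, for minimality I would invoke Castelnuovo's criterion, exactly as in the proof of Lemma~\ref{lemma:hyp-H-regular}. In the regular arithmetic surface $\oc$ the relation $(C_k \cdot C_\bullet) = 0$ for each component gives $(C_\bullet \cdot C_\bullet) = -\sum_{\text{other components}} (C_\bullet \cdot C_{\text{other}}) = -(2 + 2) = -4$, since each conic meets each of the other two transversally in $2$ points. As no special-fiber component is a smooth rational curve of self-intersection $-1$, no fiber contains an exceptional curve of the first kind, so $\oc$ is relatively minimal, hence minimal (the genus being $\geq 1$). Together with the previous paragraphs this proves the lemma.

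I expect the main obstacle to be the local regularity analysis at the six nodes: verifying transversality of the pairs of conics on $\sq_k$ (and that the third linear form does not vanish), and choosing a regular system of parameters of $\mathscr{O}_{\sq,P}$ in which the vanishing of $f$ modulo $\mathfrak{m}_P^2$ is governed precisely by $\overline{\eps}(P)$. The bookkeeping that rules out the degenerate cases --- two linear forms vanishing away from the six points, or the third linear form vanishing at a node --- and that correctly identifies $C_k$ with the union of the three conics is the other place where care is needed, though it is routine given $p \neq 2$.
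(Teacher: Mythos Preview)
Your proposal is correct and follows essentially the same approach as the paper: identify the three conic components of $C_k$, locate their six pairwise intersection points, verify regularity of $\oc$ at those nodes precisely from the nonvanishing of $\bar\eps$, and invoke Castelnuovo's criterion for minimality. Your argument is considerably more detailed than the paper's---you work explicitly inside the smooth threefold $\sq$ with a regular system of parameters and compute the self-intersections $(C_\bullet\cdot C_\bullet)=-4$ directly---whereas the paper phrases the intersection count via the $(1,1)$ divisor class on the quadric and leaves the regularity and Castelnuovo verifications as one-line remarks; but the underlying logic is the same.
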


As an example of such an $\eps$ when $p\geq 7$, let
\[
\eps_0 = X^3 + Y^3 + WZ^2.
\]

\begin{proof}
We first compute the dual graph. The special fiber of $\oc$ is given by 
\[
XY=ZW, \quad \lxy\cdot\lzw\cdot\lmzw = 0.
\]
Write $\cxy$ for the intersection of $Q_k$ and $\lxy = 0$ on the special fiber; define $\czw$ and $\cmzw$ similarly. Then the special fiber has 3 components: $\cxy, \czw$, and $\cmzw$. These are each type $(1,1)$ divisors on the quadric, and so every pair intersects in two points. Solving for these intersections, we obtain the dual graph in the figure.

\begin{figure}\centering
    \input{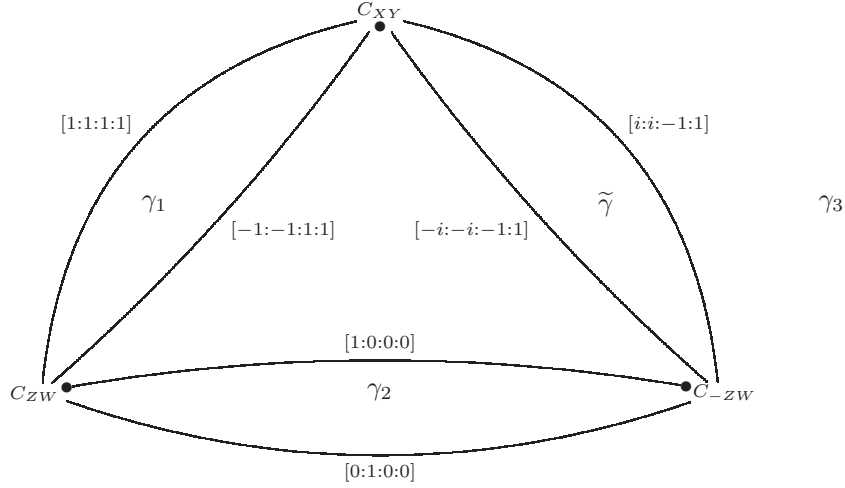}
  \caption{Dual graph of $C_k$}
\label{fig:dual-genus4}
\end{figure}

For regularity, we need only check that $\oc$ is regular at the nodes of the special fiber. By hypothesis, $\bar{\eps}$ does not vanish at any of the nodes, whence the claim follows.

  Minimality follows from checking Castelnuovo's criterion.
\end{proof}

In the figure, four loops are labeled $\gamma_1$, $\gamma_2$, $\gamma_3$ and $\tilde{\gamma}$, where $\gamma_3$ is the loop consisting of the three outside edges; we orient each loop counterclockwise. We define a fifth loop $\gamma_4$ as $\gamma_3 - \tilde{\gamma}$.

Henceforth, we will assume that $\eps$ satisfies the lemma.

The Jacobian $J$ of $C$ is an extension of a torus $T$ by the component group $\Phi$; see Lemma~\ref{lemma:genus4-component-group} for the computation of $\Phi$. The torus $T$ is a normal torus: if $i\in k^{\times}$, then $T$ is split, and $\gamma_1, \gamma_2, \gamma_3, \gamma_4$ form a basis for $H_1(\Gamma, \Z)^\gk = H_1(\Gamma, \Z)$. If $i\notin k^{\times}$, then $T$ is the product of $\G_m^2$ with the $k(i)$-norm torus. We have $\Fr \gamma_3 = \gamma_4$ and $H_1(\Gamma, \Z)^\gk$ is generated by $\gamma_1, \gamma_2$, and $\gamma_3 + \gamma_4$.

\subsection{Setup for descent}
\label{sec:genus4-setup-descent}

Given a divisor $D \in \dro C$, we wish to determine how to evaluate $\gamma_i(D)$ for each $i$. For each loop, we construct on each component a local parameter which is supported on the nodes. The specific support is determined by the loop. Furthermore, each function must be normalized so as to lie in $\fim$ for the appropriate choice of $\gamma$. An easy, if lengthy, calculation yields Table~\ref{tab:genus4localparam} below.

\renewcommand\arraystretch{2}
\begin{table}[htb]
  \centering
  \[
\begin{array}{cccc}
  \textrm{Loop} & \textrm{Component} & \textrm{Divisor} & \textrm{Local function} \\\toprule
  \gamma_1 & \cxy & [-1:-1:1:1] - [1:1:1:1] & \displaystyle\frac{Z+X}{Z-X} \equiv \frac{X+W}{X-W} \\
  & \czw & [1:1:1:1] - [-1:-1:1:1] & \displaystyle\frac{Z-X}{Z+X} \\ \midrule
  \gamma_2 & \czw & [1:0:0:0] - [0:1:0:0] & \displaystyle\frac{Z-Y}{Z-X} \\
  & \cmzw & [0:1:0:0] - [1:0:0:0] & \displaystyle\frac{Z-X}{Z+Y} \equiv \frac{X+Z}{Y-Z} \\ \midrule
  \gamma_3 & \cxy & [i:i:-1:1] - [1:1:1:1] & \displaystyle\frac{Z-iX}{Z-X}\equiv\frac{X-iW}{X-W} \\
  & \czw & [1:1:1:1] - [0:1:0:0] & \displaystyle\frac{Z-Y}{Z} \\
  & \cmzw & [0:1:0:0] - [i:i:-1:1] & \displaystyle\frac{X}{Z-iX} \\\midrule
  \gamma_3 + \gamma_4 & \cxy & - & \dfrac{Z^2+X^2}{(Z-X)^2} \\
  & \czw & - & \dfrac{(Z-Y)^2}{Z^2} \\
  & \cmzw & - & \dfrac{X^2}{Z^2+X^2} \\\bottomrule
\end{array}
\]

\caption{Local functions for a basis of $H_1(\Gamma, \Z)^\gk$}
\label{tab:genus4localparam}
\end{table}
Each function is written as one on $\Pro^3_k$, which we then restrict to the appropriate component. When two functions are listed, they agree on an open dense set; one can go from one to the other by using the equations for the given component. For example, on $\cxy$ one has $X^2 = ZW$. We have omitted $\gamma_4$; it can be obtained from $\gamma_3$ by applying the substitution $i \mapsto -i$. (Warning: this map is not in general a homomorphism on $k^\times$; for example, let $k=\F_5$, $i=2$, and compare $1+i$ with $1-i$.) The functions for $\gamma_3 + \gamma_4$ are obtained by computing $\Nm (\gamma_3, (t_i))$. As every component in $\gamma_3+\gamma_4$ appears with multiplicity 2, it makes sense that the functions are of degree 2 (when interpreted as maps from the relevant component to $\Pro^1_k$). The divisors for $\gamma_3 + \gamma_4$ are omitted, but may be obtained by taking the norm of the divisors for $\gamma_3$.

 Each function must also be normalized properly. If $i\in k$, then every node is rational over $k$, and the normalization condition is simply that each function is also defined over $k$. Clearly, this is satisfied. If $i \notin k$, then we will use evaluation on the cycles $\gamma_1$, $\gamma_2$, and $\gamma_3 + \gamma_4$. Our construction of the functions for $\gamma_3+\gamma_4$ guarantees that the normalization is correct.

\subsection{Calculation of torsion on Jacobian}
\label{sec:calc-tors-genus4}

As observed in Remark~\ref{rmk:full-torsion}, the prime-to-$p$ rational torsion in the Jacobian $J(K)(p')$ contains a subgroup isomorphic to $T(k)$; we now compute this latter group.

\begin{lemma}\label{lemma:genus4-toric-group}
  Let $i$ be any square root of $-1$ in $\kb$. If $i\in k^\times$, then
  \[
  T(k) \isom \bigg(\frac{\Z}{(q-1)\Z}\bigg)^4.
  \]
  If $i\notin k^\times$, then
  \[
  T(k) \isom \bigg(\frac{\Z}{(q-1)\Z}\bigg)^2 \oplus \frac{\Z}{(q^2-1)\Z}.
  \]
\end{lemma}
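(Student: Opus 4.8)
The plan is to compute $T(k)$ directly from the structure of the character group $X(T) \cong H_1(\Gamma,\Z)$ as a $\Z[\gk]$-module, applying Theorem~\ref{thm:T-sum-mu-i}. From Lemma~\ref{lemma:eps-regular} and the surrounding discussion, $\Gamma$ is the graph of Figure~\ref{fig:dual-genus4}: three vertices $\cxy$, $\czw$, $\cmzw$, each pair joined by two edges, so $H_1(\Gamma,\Z)$ is free of rank $6-3+1 = 4$. First I would record the Galois action: the six edges are permuted by $\gk$ according to the labels in the figure; the two $\czw$--$\cxy$ edges are individually $k$-rational (labels $[1:1:1:1]$ and $[-1:-1:1:1]$), the two $\czw$--$\cmzw$ edges are individually $k$-rational (labels $[1:0:0:0]$ and $[0:1:0:0]$), while the two $\cmzw$--$\cxy$ edges, labelled $[i:i:-1:1]$ and $[-i:-i:-1:1]$, are swapped by $\Fr$ exactly when $i\notin k$. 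Thus if $i\in k$ the action on all of $\Gamma$ is trivial, and if $i\notin k$ the action factors through $\Gal(k(i)/k)$, fixing everything except swapping that last pair of edges.

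Next I would exhibit a principal decomposition of $T$ using the loops $\gamma_1,\gamma_2,\gamma_3,\gamma_4$ already introduced (with $\gamma_4 = \gamma_3 - \tilde\gamma$), which the paper states form a $\Z$-basis for $H_1(\Gamma,\Z)$, and which it asserts give a principal decomposition: $T \cong \G_m \times \G_m \times T'$ where $T'$ has character lattice $\Z\gamma_3 \oplus \Z\gamma_4$. Here I should check (a routine homology computation) that $\gamma_1$ and $\gamma_2$ are each $\Fr$-fixed, spanning two copies of the split torus $\G_m$, and that $\Fr$ interchanges $\gamma_3$ and $\gamma_4$ when $i\notin k$. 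The natural choice is $\gamma_1$ = the $\czw$--$\cxy$ bigon, $\gamma_2$ = the $\czw$--$\cmzw$ bigon, and $\gamma_3,\gamma_4$ the two triangles through the $\cmzw$--$\cxy$ edges; one verifies these are $\Z$-independent and that $\Fr$ acts as claimed. When $i\in k$ every $\gamma_j$ is fixed, so $T$ is split of rank $4$; when $i\notin k$, the subtorus $T'$ has $X(T') = \Z[\chi]/(\chi^2-1)$ with $\Fr$ acting by $\chi$, i.e. $T'$ is the $k(i)$-norm torus.

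Then I would invoke Proposition~\ref{prop:indecomposable-torus-mu} / Theorem~\ref{thm:T-sum-mu-i}: each split factor contributes $\mu_{q-1}(\kb^\times) = k^\times \cong \Z/(q-1)\Z$, and the $k(i)$-norm factor $T'$ has characteristic polynomial of Frobenius equal to $x^2 - 1$, hence $T'(k) \cong \mu_{q^2-1}(\kb^\times) = k(i)^\times \cong \Z/(q^2-1)\Z$. Assembling: for $i\in k$, $T(k)\cong (\Z/(q-1)\Z)^4$; for $i\notin k$, $T(k)\cong (\Z/(q-1)\Z)^2 \oplus \Z/(q^2-1)\Z$, which is exactly the statement. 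The main obstacle is purely bookkeeping: verifying that the specified loops really do form a $\Z$-basis giving a \emph{principal} decomposition with the asserted Frobenius action — i.e. correctly tracking orientations and the $\gamma_4 = \gamma_3 - \tilde\gamma$ relation so that $\{\gamma_1,\gamma_2,\gamma_3,\gamma_4\}$ is unimodular and $\Fr$-stable in the right way. Once that linear-algebra/homology check is in place, the rest is an immediate application of the torus-point formulas of \S\ref{sec:basic-facts-algebr}.
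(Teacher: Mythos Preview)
Your proposal is correct and follows essentially the same route as the paper: identify the $\gk$-action on the basis $\gamma_1,\gamma_2,\gamma_3,\gamma_4$ of $H_1(\Gamma,\Z)$ (already recorded at the end of \S\ref{sec:general-facts-1}), deduce that $T$ is $\G_m^4$ when $i\in k^\times$ and $\G_m^2\times R_{k(i)/k}\G_m$ when $i\notin k^\times$, and then read off $T(k)$ via the results of \S\ref{sec:basic-facts-algebr}. The paper's proof is shorter only because it cites the structure of $T$ from the earlier discussion rather than rederiving the Galois action on the loops as you do.
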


\begin{proof}
  As observed in the end of \S \ref{sec:general-facts-1}, if $i\in k^\times$, then $T$ is a split torus, and so $T\isom \G_m^4$; the first claim follows. Note that if we fix an identification of $k^\times$ with $\Z/(q-1)\Z$, then by abuse of notation the isomorphism is given by
\[
\oplus \chi_i: T(k) \to \bigg(\frac{\Z}{(q-1)\Z}\bigg)^4.
\]
We also observed that if $i\notin k^\times$, then $T\isom \G_m^2 \times R_{k(i)/k} \G_m$. As $R_{k(i)/k} \G_m(k) \isom \G_m(k(i)) \isom \Z/(q^2-1)\Z$, the second assertion follows. The isomorphism is given by
\[
(\chi_1,\chi_2,\chi_3): T(k) \to \frac{\Z}{(q-1)\Z} \oplus \frac{\Z}{(q-1)\Z} \oplus \frac{\Z}{(q^2-1)\Z}.
\]
\end{proof}

We now use the descent map to compute the remaining factor in the prime-to-$p$ torsion. Let 
\[
\tau: \dro C \to \Div C_k
\]
be the specialization map, as in \S~\ref{sec:overview}. We define our degree map by
\begin{align*}
  \deg: \Div C_k &\to \Z^3 \\
  D &\mapsto ((D\cdot \cxy), (D\cdot \czw), (D\cdot \cmzw)).
\end{align*}

\begin{lemma}\label{lemma:genus4-component-group}
  The component group of the Jacobian of $C$ over $K$ is
  \[
  \Phi \isom \frac{\Z}{6\Z} \times \frac{\Z}{2\Z}.
  \]
  There are generators $\delta_1, \delta_2$ for the respective cyclic subgroups represented by divisors $D_1, D_2\in \dro C$ respectively such that
  \begin{align*}
    \deg \tau(D_1) &= (0,1,-1)\\
    \deg \tau(D_2) &= (1,-1,2).
  \end{align*}
\end{lemma}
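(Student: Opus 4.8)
The plan is to mimic the proof of Lemma~\ref{lemma:hyp-H-regular} (and its generalization Lemma~\ref{lemma:hd-regular}), now for the three-component special fiber of the genus 4 family. First I would write down the intersection matrix $M$ of $C_k$ with respect to the ordered basis $\cxy, \czw, \cmzw$. From Figure~\ref{fig:dual-genus4}, each pair of components meets in exactly two nodes, and each component is a $(1,1)$-divisor on the quadric $Q_k \cong \Pro^1 \times \Pro^1$, so its self-intersection on the arithmetic surface is forced by the fact that the total fiber has trivial intersection with each component: $(\cxy \cdot C_k) = 0$ gives $(\cxy \cdot \cxy) = -4$, and similarly for the others. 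Thus
\[
M = \begin{bmatrix} -4 & 2 & 2 \\ 2 & -4 & 2 \\ 2 & 2 & -4 \end{bmatrix}.
\]
By the standard recipe (\cite{raynaud1970}, \cite{baker2008}), since every component of $C_k$ has a $k$-rational point the component group is unchanged by base extension to $K^u$, and $\Phi$ is the homology of $\Z^3 \xrightarrow{M} \Z^3 \xrightarrow{\Sigma} \Z$ where $\Sigma(x,y,z) = x+y+z$; equivalently $\Phi \cong \ker\Sigma / M(\Z^3)$ (the cokernel of $M$ restricted appropriately, using that the fiber is principal so $\operatorname{im} M \subseteq \ker \Sigma$). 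Computing the Smith normal form of $M$ restricted to $\ker\Sigma$ — or equivalently diagonalizing the induced $2\times 2$ presentation — I expect to get invariant factors $2$ and $6$, yielding $\Phi \cong \Z/6\Z \times \Z/2\Z$.

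Next I would produce the explicit generators. The point is that $\Phi \cong \Z^3/(M(\Z^3) + \Z\cdot(1,1,1))$ via the degree map $\deg\circ\tau$: a divisor $D \in \dro C$ maps to $\Phi$ via $\deg\tau(D) \bmod (M(\Z^3) + \Z(1,1,1))$, and this is surjective by Proposition~\ref{prop:bosch-liu-phi-constant}(2) together with the fact (as in \S\ref{sec:overview}) that fibral classes contribute exactly $M_{fib}$. So I need to exhibit divisors $D_1, D_2$ on $C$ whose specializations have the stated degree vectors $(0,1,-1)$ and $(1,-1,2)$, and check that the images of these two vectors generate the quotient group with the right orders. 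For the first, a natural candidate is $\dv$ of a suitable ratio of linear forms: e.g.\ something like $\dv(\lzw/\lmzw)$ restricted to $C$, whose fibral part is $\czw - \cmzw$, forcing the horizontal part to have degree vector $(0,1,-1)$ after subtracting the fibral contribution — one checks this against the entries of $M$. For the second, I would similarly take $\dv$ of a ratio involving $\lxy$, or a linear form not among the three, and read off its degree vector; the vector $(1,-1,2)$ is then realized by a horizontal divisor. Verifying that $(0,1,-1)$ has order $6$ and together with $(1,-1,2)$ generates $\Z/6\Z \times \Z/2\Z$ in the quotient is a finite linear-algebra check modulo the lattice $M(\Z^3) + \Z(1,1,1)$.

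The main obstacle I anticipate is not the group-theoretic computation — that is routine Smith normal form — but rather the bookkeeping for the explicit divisors $D_1, D_2$: one must choose actual rational functions on $C$ (ratios of the linear forms $\lxy, \lzw, \lmzw$ and coordinate forms) whose divisors on $\oc$ decompose cleanly into a horizontal part plus a controlled fibral part, and then compute $\deg\tau$ of the horizontal part correctly, keeping track of which of the six nodes lie on which pair of components. This is the same kind of computation as in the proof of Theorem~\ref{thm:thetachar-B3} (where $\dv(y-g)$ was analyzed), just with three components instead of two, so the functions in Table~\ref{tab:genus4localparam} and the node locations in Figure~\ref{fig:dual-genus4} supply exactly the data needed. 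One also has to confirm that the chosen $D_i$ actually lie in $\dro C$, i.e.\ are supported away from the singular points of the special fiber, which may require a mild adjustment (adding a principal divisor supported at smooth points) but does not change the degree vector.
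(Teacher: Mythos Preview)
Your approach is essentially the paper's: write down the intersection matrix (which is exactly the $3\times 3$ matrix you give), invoke Raynaud's recipe $\Phi \cong \ker\Sigma/M(\Z^3)$, and verify by a Smith-normal-form computation; the paper's proof literally records the matrix and says ``one then verifies the claims via Raynaud.'' One comment: your plan to realize $D_1,D_2$ via explicit ratios of linear forms is more than the lemma requires---since each component of $C_k$ is $\Pro^1_k$ with smooth $k$-points, a divisor in $\dro C$ with \emph{any} prescribed degree vector exists trivially, so the second assertion reduces to the pure linear-algebra check that the stated vectors generate the correct cyclic factors in $\ker\Sigma/M(\Z^3)$ (and beware: your two descriptions of $\Phi$ are not literally equivalent, since $\Z^3/(M\Z^3+\Z(1,1,1))$ has order $36$, not $12$---stick with $\ker\Sigma/M\Z^3$).
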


\begin{proof}
  The intersection matrix for $C_k$ is
\renewcommand\arraystretch{1}
  \[
  \left[
    \begin{array}{rrr}
      -4 & 2 & 2 \\
      2 & -4 & 2 \\
      2 & 2 & -4
      \end{array}\right].
    \]
    One then verifies the claims via~\cite[Proposition~8.1.2]{raynaud1970}.
\end{proof}

Consider the pullback of $\sO(1)$ to $C$. Given a linear form $L$ in $X,Y,Z,W$, we write $\dv L$ to mean the divisor of the corresponding section of the pullback sheaf; as observed earlier, this is a section of the canonical bundle of $C$. Similarly, for any function $f$ on $\Pro^3$, $\dv f$ means the divisor of the restriction of $f$ to $C$. 
\begin{lemma}\label{lemma:genus4-torsion-functions}
  We have
  \begin{align*}
    \deg \bigg(\tau \bigg(\dv \frac{Z-W}{Z+W}\bigg)\bigg) &= (0,6,-6)\\
    \deg \bigg(\tau \bigg(\dv \frac{X+Y}{Z+W}\bigg)\bigg) &= (2,2,-4).
  \end{align*}
\end{lemma}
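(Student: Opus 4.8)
The plan is to compute each degree vector directly from the geometry of the special fiber, using only intersection numbers on the arithmetic surface $\oc$. Recall the setup: $C_k$ has three components $\cxy$, $\czw$, $\cmzw$, each a type $(1,1)$ divisor on the quadric $Q_k$, with intersection matrix as in Lemma~\ref{lemma:genus4-component-group}. The key principle is that if $f$ is a rational function on $\Pro^3_{\ok}$ whose restriction to $\oc$ is defined (not identically zero or infinite on any component of $\oc$), then $\dv_\oc f$ is a principal divisor, so its total intersection with each component $C_i$ of $C_k$ is zero; writing $\dv_\oc f = \tilde{D} + F$ where $\tilde{D}$ is the horizontal part (agreeing with $\dv_C f$ on the generic fiber) and $F$ is fibral, we get $\deg(\tau(\dv_C f))_i = (\tilde{D}\cdot C_i) = -(F\cdot C_i)$. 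So the whole computation reduces to identifying the fibral part $F$, i.e.\ determining the multiplicity of each component $\cxy,\czw,\cmzw$ in the divisor of the numerator and denominator of each $f$ along the special fiber, and then multiplying the resulting fibral combination by the intersection matrix.

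Concretely, for the first function $f = \lzw/\lmzw = (Z-W)/(Z+W)$: on $\oc$, the vanishing locus of $Z-W$ restricted to $C$ is, by the defining equation $\lxy\cdot\lzw\cdot\lmzw = \pi\eps$, supported at the reduction on the component $\czw$ (where $Z-W\equiv 0$) together with horizontal points, and similarly $Z+W$ vanishes along $\cmzw$. So I would argue $\dv_\oc(Z-W) = \tilde{D}_1 + \czw$ and $\dv_\oc(Z+W) = \tilde{D}_2 + \cmzw$ up to contributions that cancel; one must check the multiplicities are exactly $1$, which follows because $\lxy\cdot\lzw\cdot\lmzw$ cuts out the reduced special fiber (each component with multiplicity one) and $\pi$ has valuation one. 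Hence the fibral part of $\dv_\oc(Z-W)/(Z+W)$ is $\czw - \cmzw$, and
\[
\deg\bigg(\tau\bigg(\dv\frac{Z-W}{Z+W}\bigg)\bigg) = -(\czw - \cmzw)\cdot(\cxy,\czw,\cmzw) = -\big((2,-4,2) - (2,2,-4)\big) = (0,6,-6),
\]
using the rows of the intersection matrix. The second function $f = (X+Y)/(Z+W)$ is handled the same way, except that $X+Y$ does not appear among the three linear forms $\lxy,\lzw,\lmzw$ defining the cubic; instead one must locate where $X+Y\equiv 0$ meets $C_k$ on the quadric $XY=ZW$ and check it contributes no fibral component (it cuts $C_k$ in horizontal points only), so the fibral part of $\dv_\oc(X+Y)/(Z+W)$ is simply $-\cmzw$, giving
\[
\deg\bigg(\tau\bigg(\dv\frac{X+Y}{Z+W}\bigg)\bigg) = -(-\cmzw)\cdot(\cxy,\czw,\cmzw) = (2,2,-4).
\]

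The main obstacle is the careful bookkeeping of multiplicities along the special fiber: one must verify that none of $Z-W$, $Z+W$, $X+Y$ vanishes to order higher than expected along any component, that $X+Y$ genuinely contributes no fibral component (equivalently, that the horizontal divisor $\dv_C(X+Y)$ specializes to a divisor supported away from the nodes, or at least with zero net intersection contribution beyond the tautological one), and that the generic-fiber horizontal divisors $\tilde{D}_i$ restrict correctly so that $\deg(\tau(\dv_C f))$ really equals $-(F\cdot C_i)$. This is a finite local check at the six nodes and the finitely many other intersection points of the linear forms with $C_k$, using the regularity of $\oc$ established in Lemma~\ref{lemma:eps-regular}; once those local multiplicities are pinned down, the rest is just multiplying a fibral vector by the intersection matrix.
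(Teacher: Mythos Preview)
Your argument is correct, and it is the same technique the paper uses in the hyperelliptic case (proof of Theorem~\ref{thm:thetachar-B3}): split $\dv_{\oc} f$ into horizontal plus fibral, then read off $(\tilde D\cdot C_i) = -(F\cdot C_i)$ from the intersection matrix. The multiplicity check you flag is exactly the right one, and your justification (that $\lxy\lzw\lmzw=\pi\eps$ with $\lxy,\lmzw,\eps$ units at the generic point of $\czw$, together with $v_{\czw}(\pi)=1$ since the special fiber is reduced) nails $v_{\czw}(\lzw)=1$; similarly $X+Y$ is nonvanishing on each component, so it contributes no fibral part.

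The paper, however, takes a more direct route here and bypasses the intersection matrix entirely: each of $X+Y$, $Z-W$, $Z+W$ is a linear section of the canonical embedding, hence cuts out an effective degree-$6$ divisor on $C$; the specialization of $\dv(Z-W)$ lands entirely on $\czw$ (giving degree vector $(0,6,0)$) and likewise $\dv(Z+W)$ lands on $\cmzw$ (giving $(0,0,6)$), so the first claim is immediate by subtraction. For $X+Y$, the paper observes that $X+Y=0$ is a type-$(1,1)$ divisor on $Q_k$ just like each component, hence meets each $C_i$ in two points, giving $(2,2,2)$; subtracting $(0,0,6)$ yields $(2,2,-4)$. Your method is more uniform and reuses Lemma~\ref{lemma:genus4-component-group}; the paper's is shorter and makes the geometry of the specialization visible without any matrix arithmetic.
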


\begin{proof}
  The divisor of each linear section is effective of degree 6. Clearly the specialization of the divisor of $Z-W$ lies entirely on $\czw$, and similarly for $Z+W$ and $\cmzw$. As for $X+Y$, we consider the special fiber $C_k$ as the intersection of $Q_k$ with the degenerate cubic form $\lxy\lzw\lmzw = 0$. Each of $\lxy,\lzw,\lmzw$ gives rise to a type $(1,1)$ divisor on $Q_k$, as does $X+Y=0$. Thus the divisor of $X+Y$ intersects each component of $C_k$ in two points (up to multiplicity). The claim follows.
\end{proof}

 To now calculate the prime-to-$p$ torsion of $J(K)$, we will evaluate the functions in Table~\ref{tab:genus4localparam} on the specializations of the divisors of $X+Y$, $Z-W$, and $Z+W$.

 \begin{lemma}
   The specialization of the divisor $\dv (X+Y)$ on $C_k$ is
   \begin{align*}
      [0:0:1:0] &+ [0:0:0:1] \\
     +  [i:-i:1:1] &+ [-i:i:1:1] \\
     +  [-1:1:-1:1] &+ [1:-1:-1:1].
   \end{align*}
   The first pair lies on $\cxy$, the second on $\czw$, and the third on $\cmzw$.
 \end{lemma}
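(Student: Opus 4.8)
The plan is to reduce the computation of $\tau(\dv(X+Y))$ to cutting out the linear form $X+Y$ directly on $C_k\subset Q_k$, component by component. The key preliminary observation is that no fibral correction intervenes: the form $X+Y$ is a global section of $\sO_{\Pro^3}(1)$ which pulls back to a section over $\oc$, and the zero scheme of that section equals the closure of $\dv_C(X+Y)$ in $\oc$ plus a fibral divisor supported on those components $C_i$ of $C_k$ for which $(X+Y)|_{C_i}\equiv 0$. But $(X+Y)|_{\cxy}=2X$ is nonzero (as $p\neq 2$), and $X+Y$ restricts to a nonzero quadratic form on each of $\czw$ and $\cmzw$, as the parametrizations below make explicit; hence there is no fibral part. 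Therefore $\tau(\dv(X+Y))$ is simply the effective degree-$6$ divisor cut out on $C_k$ by $X+Y=0$. I would also note that $X+Y$ vanishes at none of the six nodes of $C_k$ listed in Lemma~\ref{lemma:eps-regular}, so this divisor is supported entirely at smooth points of $C_k$, each of which lies on a single component.

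I would then work on each of the three components, each a smooth plane conic, so isomorphic to $\Pro^1$. On $\cxy=Q_k\cap\{X=Y\}$ the equations read $X=Y$ and $X^2=ZW$; imposing $X+Y=0$ forces $X=Y=0$ and hence $ZW=0$, giving $[0:0:1:0]$ and $[0:0:0:1]$. On $\czw=Q_k\cap\{Z=W\}$ the equations read $Z=W$ and $XY=Z^2$; imposing $Y=-X$ gives $Z^2=-X^2$, so $Z=\pm iX$, and rescaling yields $[i:-i:1:1]$ and $[-i:i:1:1]$. On $\cmzw=Q_k\cap\{Z=-W\}$ the equations read $Z=-W$ and $XY=-W^2$; imposing $Y=-X$ gives $W^2=X^2$, so $W=\pm X$, yielding $[1:-1:-1:1]$ and $[-1:1:-1:1]$. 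Parametrizing the three conics, say $\cxy$ by $[s:t]\mapsto[st:st:s^2:t^2]$, $\czw$ by $[s^2:t^2:st:st]$ and $\cmzw$ by $[s^2:-t^2:-st:st]$, shows that in each case $X+Y$ restricts to a degree-$2$ form on $\Pro^1$ with two distinct roots, so each of the six points occurs with multiplicity exactly one; alternatively, the six points are pairwise distinct and a degree count (total degree $6$) forces multiplicity one. This is exactly the asserted divisor, and since none of the six points lies on two components, the stated distribution onto $\cxy$, $\czw$, $\cmzw$ is immediate.

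Everything here is elementary linear algebra over $k$; the one point genuinely requiring attention — and the step I would flag as the main, if modest, obstacle — is the verification that $X+Y$ vanishes identically on no component of $C_k$. Without it the closure of $\dv_C(X+Y)$ in $\oc$ would pick up a fibral term, exactly as happens for $Z-W$ and $Z+W$ (cf.\ the proof of Lemma~\ref{lemma:genus4-torsion-functions}), and the naive computation above would fail to compute $\tau(\dv(X+Y))$.
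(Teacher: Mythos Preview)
Your proof is correct and follows the same approach as the paper: compute the intersection of the hyperplane $X+Y=0$ with each component of $C_k$, having first noted (as the paper does by reference to the proof of Lemma~\ref{lemma:genus4-torsion-functions}) that $X+Y$ vanishes identically on none of them, so no fibral correction arises. The paper's own proof is terser---it simply invokes the earlier observation that $X+Y=0$ cuts a type $(1,1)$ divisor on $Q_k$ meeting each component transversely in two points and declares the rest straightforward---but your explicit parametrizations and multiplicity check are exactly the content being suppressed.
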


 \begin{proof}
   As mentioned earlier, the hyperplane $X+Y=0$ in $\Pro^3_k$ intersects every component transversely in two points, and so the calculation of $\dv (X+Y)$ is straightforward. The second claim is easily verified.
 \end{proof}

Let
\begin{align*}
  \hzw(x) &= x^3\bar{\eps}\bigg(x,\frac1x, 1, 1\bigg) \\
  \hmzw(x) &= x^3\bar{\eps}\bigg(x, -\frac1x, -1, 1\bigg)
\end{align*}
Let $\alpha_1, \cdots, \alpha_6$ be the roots of $\hzw(x)$ counted with multiplicity, and similarly let $\beta_1, \cdots, \beta_6$ be the roots of $\hmzw(x)$.
\begin{lemma}
  The specialization of the divisor of $Z-W$ is
  \[
  \sum \bigg[\alpha_i: \frac1\alpha_i:1:1\bigg].
  \]
  The specialization of the divisor of $Z+W$ is
  \[
  \sum \bigg[\beta_i: -\frac1\beta_i:-1:1\bigg].
  \]
\end{lemma}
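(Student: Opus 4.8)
The plan is to compute $\dv(Z-W)$ on the generic fiber explicitly, check that its points are $\ok$-integral with unit coordinates, and then reduce modulo $\pi$; the divisor $\dv(Z+W)$ is handled identically after the substitution $W\mapsto -W$. First I would use that $C$ is the canonical model of a genus $4$ curve, so it has degree $2g-2=6$ in $\Pro^3$ and $\dv(Z-W)$ is an effective divisor of degree $6$ supported on $\{Z=W\}\cap C$. On that locus the quadric relation becomes $XY=Z^2$ and the cubic relation becomes $\pi\eps=0$, hence $\eps=0$ since $\pi\neq 0$ in $K$. The locus $\{Z=W=0\}$ meets $C$ only in $[1:0:0:0]$ and $[0:1:0:0]$, and by Lemma~\ref{lemma:eps-regular} the form $\eps$ (a unit modulo $\pi$ at each of these) does not vanish there, so neither point lies on $C$. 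Thus every point of the support has $Z=W\neq 0$; normalizing $Z=W=1$ gives $Y=X^{-1}$ and points $[X:X^{-1}:1:1]$ with $X$ a root of $f(X):=X^3\eps(X,X^{-1},1,1)\in\ok[X]$. Comparing degrees shows $\dv(Z-W)=\sum_\rho[\rho:\rho^{-1}:1:1]$ with $\rho$ ranging over the roots of $f$ counted with multiplicity, and in particular $\deg f=6$.

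Next I would reduce modulo $\pi$. The leading coefficient of $f$ is the coefficient of $X^3$ in $\eps$, namely $\eps(1,0,0,0)$, and its constant term is $\eps(0,1,0,0)$; both are units by Lemma~\ref{lemma:eps-regular}, so each root $\rho$ of $f$ is a unit in $\mathscr{O}_{\Kb}$ and $\deg\bar f=6$. Hence $[\rho:\rho^{-1}:1:1]$ extends to an $\mathscr{O}_{\Kb}$-point of $\oc$ with reduction $[\bar\rho:\bar\rho^{-1}:1:1]$, where (with multiplicities matching since $\deg f=\deg\bar f$) the $\bar\rho$ run over the roots of $\bar f=\hzw$. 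Since $\hzw(1)=\bar\eps(1,1,1,1)$ and $\hzw(-1)=-\bar\eps(-1,-1,1,1)$ are nonzero by Lemma~\ref{lemma:eps-regular}, no $\bar\rho$ equals $\pm 1$; as the nodes of $C_k$ lying on $\czw$ are precisely $[1:1:1:1]$, $[-1:-1:1:1]$ (on $\cxy\cap\czw$) and $[1:0:0:0]$, $[0:1:0:0]$ (on $\czw\cap\cmzw$, ruled out here since $Z=W=1$), each $[\bar\rho:\bar\rho^{-1}:1:1]$ is a smooth point of $C_k$ on $\czw$. Therefore $\dv(Z-W)\in\dro C$ and $\tau(\dv(Z-W))=\sum_i[\alpha_i:\alpha_i^{-1}:1:1]$ with the $\alpha_i$ the roots of $\hzw$; this is consistent with $\deg\tau(\dv(Z-W))=(0,6,0)$, which follows from Lemma~\ref{lemma:genus4-torsion-functions}.

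For $\dv(Z+W)$ I would repeat the argument with the normalization $Z=-1$, $W=1$, so that $XY=-1$, $Y=-X^{-1}$, and the points become $[X:-X^{-1}:-1:1]$ with $X$ a root of $X^3\eps(X,-X^{-1},-1,1)$, whose leading and constant coefficients are again $\pm\eps(1,0,0,0)$ and $\pm\eps(0,1,0,0)$, hence units; the points at $Z=W=0$ are once more the two off-curve points $[1:0:0:0]$, $[0:1:0:0]$. Upon reduction the only nodes that could be met lie on $\cmzw$, namely $[i:i:-1:1]$ and $[-i:-i:-1:1]$; using $-1/i=i$ and $1/i=-i$ one computes $\hmzw(i)=i^3\bar\eps(i,i,-1,1)$ and $\hmzw(-i)=(-i)^3\bar\eps(-i,-i,-1,1)$, both nonzero by Lemma~\ref{lemma:eps-regular}, so those nodes are avoided and $\tau(\dv(Z+W))=\sum_i[\beta_i:-\beta_i^{-1}:-1:1]$.

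The only genuinely delicate point is the compatibility of ``take the Zariski closure in $\oc$, then intersect with $C_k$'' with the explicit description of the generic-fiber support: I need, first, that the generic-fiber points have unit coordinates (so the closure is a section and reduction commutes with the coordinate formulas), and second, that their reductions avoid the singular locus of $C_k$ (so that $\tau$ is even defined on this divisor and lands on $\czw$, resp.\ $\cmzw$). Both facts come directly from the non-vanishing of $\bar\eps$ at the six points of Lemma~\ref{lemma:eps-regular}: the first from non-vanishing at $[1:0:0:0]$ and $[0:1:0:0]$, the second from non-vanishing at the remaining four. Everything else is routine substitution of $Y=\pm X^{-1}$, $Z=\pm 1$, $W=1$ into a homogeneous cubic and clearing denominators.
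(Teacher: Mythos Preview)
Your proof is correct and follows essentially the same route as the paper's: solve the system $XY=ZW$, $Z=W$, $\eps=0$ generically, rule out $Z=W=0$ via the nonvanishing of $\bar\eps$ at $[1:0:0:0]$ and $[0:1:0:0]$, normalize $Z=W=1$, and identify the support with the roots of $X^3\eps(X,X^{-1},1,1)$. You supply considerably more detail than the paper does—explicitly checking that the leading and constant coefficients of $f$ are units (so the roots are units and multiplicities are preserved under reduction), and verifying that the specialized points avoid the nodes of $C_k$ (so the divisor lies in $\dro C$)—both of which the paper leaves implicit but which are needed for the subsequent evaluations in Table~\ref{tab:genus4divisors}.
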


\begin{proof}
  We first consider $Z-W$. Generically, we wish to solve the system
  \[
  XY=ZW, \quad Z=W, \quad \eps(X,Y,Z,W) = 0.
  \]
  Lemma~\ref{lemma:eps-regular} implies that $\eps = aX^3 + bY^3 + \cdots$, where $a,b \in \ok$ are units. If $Z=W=0$, this implies that $X=Y=0$, which does not occur in $\Pro^3$. Therefore we may assume that $Z=W=1$. Together with the fact that the coefficient $\bar{b} \neq 0$, the claim for $Z-W$ now follows.

  Similar reasoning holds for the divisor of $Z+W$.
\end{proof}

\begin{lemma}\label{lemma:prod-alpha-beta}
  For $c\neq 0$, the following equalities hold:
  \begin{gather*}
    \prod (c-\alpha_i) = \prod (\alpha_i - c) = c^3\frac{\bar{\eps}\big(c, \frac1c, 1,1\big)}{\bar{\eps}(1,0,0,0)} \\
    \prod (c-\beta_i) =  \prod (\beta_i - c) = c^3\frac{\bar{\eps}\big(c, -\frac1c, -1, 1\big)}{\bar{\eps}(1,0,0,0)} \\
    \prod \alpha_i = -\prod \beta_i = \frac{\bar{\eps}(0,1,0,0)}{\bar{\eps}(1,0,0,0)}
  \end{gather*}
\end{lemma}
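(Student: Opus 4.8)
The plan is to recognize $\hzw$ and $\hmzw$ as honest degree-$6$ polynomials and then read off all three families of identities from their linear factorizations. First I would observe that for a monomial $X^aY^bZ^cW^d$ with $a+b+c+d=3$, the substitution $X=x$, $Y=1/x$, $Z=W=1$ produces $x^{a-b}$, so $x^3\bar{\eps}(x,1/x,1,1)$ is a polynomial whose exponents of $x$ lie in $\{0,1,\dots,6\}$. No cancellation is possible in the extreme degrees: the coefficient of $x^6$ forces $a-b=3$, hence $a=3$, so it equals the coefficient of $X^3$ in $\bar{\eps}$, namely $\bar{\eps}(1,0,0,0)$; the constant term forces $b=3$, so it equals the coefficient of $Y^3$, namely $\bar{\eps}(0,1,0,0)$. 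By Lemma~\ref{lemma:eps-regular}, $\bar{\eps}$ vanishes at neither $[1:0:0:0]$ nor $[0:1:0:0]$, so $\hzw$ has degree exactly $6$ and nonzero constant term; in particular the $\alpha_i$ are well-defined (with multiplicity), nonzero, and
\[
\hzw(x) = \bar{\eps}(1,0,0,0)\prod_{i=1}^{6}(x-\alpha_i).
\]

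Next I would simply evaluate this identity. At $x=c$ with $c\neq 0$ it gives $\prod(c-\alpha_i) = \hzw(c)/\bar{\eps}(1,0,0,0) = c^3\bar{\eps}(c,1/c,1,1)/\bar{\eps}(1,0,0,0)$, which is the first claimed equality; since there are six factors, $\prod(\alpha_i-c) = (-1)^6\prod(c-\alpha_i) = \prod(c-\alpha_i)$, giving the second. Evaluating instead at $x=0$ and comparing with the constant term $\bar{\eps}(0,1,0,0)$ yields $\bar{\eps}(1,0,0,0)(-1)^6\prod\alpha_i = \bar{\eps}(0,1,0,0)$, i.e.\ $\prod\alpha_i = \bar{\eps}(0,1,0,0)/\bar{\eps}(1,0,0,0)$.

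The $\beta_i$ are handled identically, the only difference being that the substitution now sends $Z\mapsto -1$, $W\mapsto 1$, $Y\mapsto -1/x$. The one place where care is needed is the effect of these signs on the leading and constant coefficients of $x^3\bar{\eps}(x,-1/x,-1,1)$: the $x^6$ term again comes solely from the $X^3$ monomial, which is untouched by the sign changes, so the leading coefficient is still $\bar{\eps}(1,0,0,0)$; the $x^0$ term comes solely from the $Y^3$ monomial, and $Y=-1/x$ contributes a factor $(-1)^3=-1$, so the constant term is $-\bar{\eps}(0,1,0,0)$. Hence $\hmzw(x) = \bar{\eps}(1,0,0,0)\prod(x-\beta_i)$, and the same two evaluations give $\prod(c-\beta_i)=\prod(\beta_i-c)= c^3\bar{\eps}(c,-1/c,-1,1)/\bar{\eps}(1,0,0,0)$ and $\prod\beta_i = -\bar{\eps}(0,1,0,0)/\bar{\eps}(1,0,0,0) = -\prod\alpha_i$. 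There is no real obstacle beyond this sign bookkeeping and the use of Lemma~\ref{lemma:eps-regular} to ensure the two relevant coefficients of $\bar{\eps}$ are nonzero.
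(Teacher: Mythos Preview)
Your proof is correct and follows essentially the same approach as the paper: identify the leading and constant coefficients of $\hzw$ and $\hmzw$ as $\bar{\eps}(1,0,0,0)$ and $\pm\bar{\eps}(0,1,0,0)$, then read off the products from the factored form. You supply more detail than the paper (which declares the first two identities ``trivial'' once the lead coefficient is known), in particular by explicitly invoking Lemma~\ref{lemma:eps-regular} to guarantee degree exactly $6$ and by tracking the sign $(-1)^{b+c}$ in the $\hmzw$ substitution, but the underlying argument is identical.
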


\begin{proof}
  Observe that the lead coefficients of $\hzw$ and of $\hmzw$ are the coefficient of $X^3$ in $\bar{\eps}$, which equals $\bar{\eps}(1,0,0,0)$. From this, the first two claims are trivial. To evaluate $\prod \alpha_i$, we see that the product equals $\hzw(0)$ divided by its lead coefficient, which is the constant term of $x^3\bar{\eps}(x,\frac1x, 1, 1)$ divided by its lead coefficient. Since $\bar{\eps}$ is a homogeneous cubic, the constant term is the coefficient of $Y^3$, which equals $\bar{\eps}(0,1,0,0)$. The argument for $\prod \beta_i$ is similar.
\end{proof}

We now evaluate the $\gamma_i$ on the divisors discussed above, as well as on certain linear combinations of these divisors. For convenience, we only summarize the results in Table~\ref{tab:genus4divisors} below---the calculations are routine. The only two ``tricks'' are liberal use of Lemma~\ref{lemma:prod-alpha-beta} and that, for each point in the support of the given divisor, one should choose the local parameter in Table~\ref{tab:genus4localparam} which is regular and nonvanishing at that point. (The Table~\ref{tab:genus4localparam} is constructed so that there is always such a choice for the divisors below.) In Table~\ref{tab:genus4divisors}, we write $\eps$ in place of $\bar{\eps}$ for convenience. In the interests of space, we omit $\gamma_4(D)$, which can be obtained by replacing $i$ with $-i$ in $\gamma_3(D)$.

\renewcommand\arraystretch{2}
\begin{table}[htb]
  \centering
  \[
\begin{array}[c]{ccccc}
D  & \gamma_1(D) & \gamma_2(D) & \gamma_3(D) & \gamma_4(D) \\\toprule
\dv X+Y & -1 & -1 & -i & - \\
\dv Z-W & \dfrac{\eps(1,1,1,1)}{\eps(-1,-1,1,1)} & \dfrac{\eps(1,0,0,0)}{\eps(0,1,0,0)} & \dfrac{\eps(1,1,1,1)}{\eps(0,1,0,0)} & - \\
\dv Z+W & 1 & -\dfrac{\eps(0,1,0,0)}{\eps(1,0,0,0)} & i\dfrac{\eps(0,1,0,0)}{\eps(i,i,-1,1)} & - \\ \midrule
\dv \frac{Z-W}{Z+W} & \dfrac{\eps(1,1,1,1)}{\eps(-1,-1,1,1)} & -\dfrac{\eps(1,0,0,0)^2}{\eps(0,1,0,0)^2} & -i\dfrac{\eps(1,1,1,1)\eps(i,i,-1,1)}{\eps(0,1,0,0)^2} & - \\
\dv \frac{X+Y}{Z+W} & -1 & \dfrac{\eps(1,0,0,0)}{\eps(0,1,0,0)} & -\dfrac{\eps(i,i,-1,1)}{\eps(0,1,0,0)} & - \\
\dv \frac{Z-W}{X+Y} & -\dfrac{\eps(1,1,1,1)}{\eps(-1,-1,1,1)} & -\dfrac{\eps(1,0,0,0)}{\eps(0,1,0,0)} & i\dfrac{\eps(1,1,1,1)}{\eps(0,1,0,0)} & - \\ \midrule
\dv \frac{Z^2-W^2}{X+Y} & -\dfrac{\eps(1,1,1,1)}{\eps(-1,-1,1,1)} & 1 & -\dfrac{\eps(1,1,1,1)}{\eps(i,i,-1,1)} & - \\
\dv \frac{(Z+W)^2}{Z-W} & \dfrac{\eps(-1,-1,1,1)}{\eps(1,1,1,1)} & \dfrac{\eps(0,1,0,0)^3}{\eps(1,0,0,0)^3} & -\dfrac{\eps(0,1,0,0)^3}{\eps(i,i,-1,1)^2\eps(1,1,1,1)} & - \\
\bottomrule
\end{array}
\]
\caption{Evaluation of loops on certain divisors}
\label{tab:genus4divisors}
\end{table}

\begin{theorem}\label{thm:genus4-torsion}
  Let $K$ be a local field with uniformizer $\pi$ and residue field $k$ of characteristic $p\geq 5$ and order $q$. Let $i \in \kb^\times$ be a fixed square root of $-1$. Let $C$ be the locus in $\Pro^3_K$ given by $XY=ZW$ and
  \[
  (X-Y)(Z-W)(Z+W) = \pi \eps
  \]
  with $\eps\in \ok[X,Y,Z,W]$ a homogeneous cubic satisfying Lemma~\ref{lemma:eps-regular}. Let $J$ be its Jacobian, $J(K)(p')$ the largest torsion subgroup of $J(K)$ with order not divisible by $p$, and $T$ the toric part of the reduction of a N\'eron model for $J$ over $\ok$. Then $J(K)(p')$ lies in a short exact sequence
  \[
  0 \to T(k) \to J(K)(p') \to \frac{\Z}{6\Z} \oplus \frac{\Z}{2\Z} \to 0.
  \]
  More precisely, we have the following.
  \begin{enumerate}
      \item Suppose $i \in k^\times$. Then $J(K)(p')$ is isomorphic to
    \[
    \frac{\Z}{a_0\Z} \oplus \frac{\Z}{b_0\Z} \oplus \frac{\Z}{a_1(q-1)\Z} \oplus \frac{\Z}{b_1(q-1)\Z} \oplus \bigg(\frac{\Z}{(q-1)\Z}\bigg)^2
    \]
    where $a_0a_1 = 6$, $b_0b_1 = 2$, and these constants are computed as follows:

Let $H_1 \subset k^\times$ be the subgroup generated by the entries in the row $\dv \frac{Z-W}{Z+W}$ of Table~\ref{tab:genus4divisors}, let $H_2$ be the group generated by the entries in the row $\dv \frac{X+Y}{Z+W}$, and let $H_3$ be the group generated by the entries in the row $\dv \frac{Z-W}{X+Y}$. (Recall that the last entry $\gamma_4(D)$ is obtained from $\gamma_3(D)$ by replacing $i$ with $-i$.) 

Then $a_1$ is the order of $\frac{H_1\cdot k^{\times 6}}{k^{\times 6}}$ and $a_0=6/a_1$. 

If $H_3 \subset k^{\times 2}$, set $b_1=1$ and $b_0 = 2$. Otherwise, $b_1$ is the order of $\frac{H_2\cdot k^{\times 2}}{k^{\times 2}}$ and $b_0 = 2/b_1$. 

      \item Suppose $i \notin k^\times$. Then $J(K)(p')$ is isomorphic to
    \[
    \frac{\Z}{a_0\Z} \oplus \frac{\Z}{c_0\Z} \oplus \frac{\Z}{a_1(q-1)\Z} \oplus \frac{\Z}{b_1(q-1)\Z} \oplus \frac{\Z}{b_3c_3(q^2-1)\Z}
    \]
    where $a_0a_1 = b_1b_3 = 2$, $c_0c_3 = 3$, and these constants are computed as follows:

    Let $\ell = k(i)$. Let $H_1 \subset k^\times$ be the subgroup generated by the first two entries of row $\dv \frac{Z-W}{Z+W}$ of Table~\ref{tab:genus4divisors} and the norm from $\ell$ to $k$ of the third entry. Let $H_3 \subset k^\times$ be the subgroup generated by the first two entries of row $\dv \frac{Z-W}{X+Y}$ and the norm from $\ell$ to $k$ of the third entry.

    If $3\mid (q-1)$ and $H_1 \subset k^{\times 3}$, or $3\nmid (q-1)$ and
    \[
    \eps(i,i,-1,1)^{(q^2-1)/3} = 1,
    \]
    then $c_0=3$. Otherwise $c_0 = 1$. In either case, $c_3 = 3/c_0$.

    If $\Nm_{\ell/k} \eps(i,i,-1,1) \in k^{\times 2}$, then $b_1 = 2$ and $b_3=1$. Otherwise, $b_1=1$ and $b_3=2$.

    If $H_3 \subset k^{\times 2}$, then $a_0=2$ and $a_1=1$. Otherwise, $a_0=1$ and $a_1=2$.
  \end{enumerate}
\end{theorem}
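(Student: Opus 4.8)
The strategy is to identify $J(K)(p')$ with the finite group $J_k(k)$, extract the short exact sequence from the structure $0\to T\to J_k\to\Phi\to 0$ of the special fibre of the N\'eron model, and then determine the isomorphism type of this extension one prime at a time, using the connecting map $\nu$ of~\eqref{eq:snake-cbdy-nu} as computed by Proposition~\ref{prop:computing-nu-with-div-f} from the functions of Lemma~\ref{lemma:genus4-torsion-functions} and the values tabulated in Table~\ref{tab:genus4divisors}. First, since $p\geq 5$, the integers $\#\Phi=12$ and $\#T(k)$ — which is $(q-1)^4$ or $(q-1)^2(q^2-1)$ by Lemma~\ref{lemma:genus4-toric-group} — are prime to $p$, so $J_k(k)$ has order prime to $p$; by Remark~\ref{rmk:full-torsion} we get $J(K)(p')=J(k)(p')=J_k(k)$. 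Because the components of $C_k$ are geometrically irreducible (Lemma~\ref{lemma:eps-regular}), Proposition~\ref{prop:bosch-liu-phi-constant} shows $\Phi(k)=\Phi\isom\Z/6\Z\oplus\Z/2\Z$ (Lemma~\ref{lemma:genus4-component-group}) and that $J_k(k)\to\Phi$ is onto; combined with Lemma~\ref{lemma:genus4-toric-group} this is the displayed short exact sequence.

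To pin down the group structure I would use the snake-lemma sequence $J_k(k)[r]\to\Phi[r]\overset{\nu}{\to}T(k)/rT(k)\to J_k(k)/rJ_k(k)$: the order of $\nu(\delta)$ measures exactly how far the cyclic factor $\langle\delta\rangle\subseteq\Phi$ fails to lift to $r$-torsion in $J_k(k)$, hence by how much it must merge with a cyclic factor of $T(k)$. Writing $\delta_1$ (order $6$) and $\delta_2$ (order $2$) for the generators of $\Phi$ represented by the divisors $D_1,D_2$ of Lemma~\ref{lemma:genus4-component-group}, normalised by subtracting a divisor supported at smooth points so that $\deg\tau(D_j)$ has total degree $0$ and an admissible $f$ with $\deg(\dv f)=-r\deg(D_j)$ exists, I would apply Proposition~\ref{prop:computing-nu-with-div-f} and Corollary~\ref{cor:torsion-nonsplit}: with $r=6$ and $f=(Z+W)/(Z-W)$ for $\delta_1$; with $r=2$ and $f=(X+Y)/(Z+W)$ for $\delta_2$; and with $r=2$ and $f=(X+Y)/(Z-W)$ for $3\delta_1+\delta_2$. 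In each case $\chi_{\gamma_j}(\nu(\delta))=\gamma_j(\dv f)$, and these values are precisely the entries of the rows $\dv\frac{Z-W}{Z+W}$, $\dv\frac{X+Y}{Z+W}$, $\dv\frac{Z-W}{X+Y}$ of Table~\ref{tab:genus4divisors} (up to inversion, which is harmless modulo $r$th powers), whence the subgroups $H_1,H_2,H_3$ of the statement.

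Converting orders into invariant factors then uses the identifications $T(k)\isom(k^\times)^4$ or $(k^\times)^2\times\ell^\times$ ($\ell=k(i)$) of Lemma~\ref{lemma:genus4-toric-group} together with the cyclicity of $k^\times$ and $\ell^\times$: the order of $\nu(\delta)$ in $T(k)/rT(k)$ equals the order of $H\cdot k^{\times r}/k^{\times r}$ (for the split coordinates) or of the corresponding subgroup after applying $\Nm_{\ell/k}$ to the norm-torus coordinate. Here one invokes Corollary~\ref{cor:descent-normal-torus} — the norm $\ell^\times/\ell^{\times r}\to k^\times/k^{\times r}$ is an isomorphism when $r\mid q-1$ — and, when $3\nmid q-1$, the direct criterion that $\omega\in\ell^\times$ is a cube iff $\omega^{(q^2-1)/3}=1$, which is what produces the condition $\eps(i,i,-1,1)^{(q^2-1)/3}=1$. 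This yields $a_1,b_1,b_3,c_3$ and their complements; the subgroup $H_3$ enters only to detect whether $3\delta_1+\delta_2$ lifts to $2$-torsion (equivalently $H_3\subseteq k^{\times 2}$), which fixes how the $2$-part of $\Phi$ is distributed among the cyclic summands. A Smith-normal-form computation on the presentation of $J_k(k)$ by $T(k)$ together with the relations $6\tilde\delta_1=(\text{a lift of }\nu(\delta_1))$ and $2\tilde\delta_2=(\text{a lift of }\nu(\delta_2))$ then gives the stated decomposition, prime by prime at $\ell=2$ and $\ell=3$.

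\textbf{Main obstacle.} The genuinely delicate step is this last piece of bookkeeping: following how the $2$- and $3$-parts of $\Phi$ interact with the $2$- and $3$-parts of $T(k)$, which forces separate treatment of $3\mid q-1$ and $3\nmid q-1$ (in the non-split case the latter throwing the obstruction entirely onto the $\ell^\times$-summand, hence onto the $\Z/(q^2-1)\Z$ factor), and unwinding which invariant factor absorbs each nontrivial contribution. The supporting computations — that each chosen $f$ has $\deg(\dv f)=-r\deg(D_\delta)$ modulo $M_{fib}$, and that the entries of Table~\ref{tab:genus4divisors} simplify via Lemma~\ref{lemma:prod-alpha-beta} to the values of $\bar\eps$ at the six points of Lemma~\ref{lemma:eps-regular} — are routine; granting them, the theorem follows from the snake-lemma mechanism above.
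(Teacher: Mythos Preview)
Your proposal is correct and follows essentially the same route as the paper: establish the short exact sequence via Proposition~\ref{prop:bosch-liu-phi-constant} and Lemma~\ref{lemma:genus4-component-group}, then compute the connecting map $\nu$ of~\eqref{eq:snake-cbdy-nu} using Proposition~\ref{prop:computing-nu-with-div-f} with the functions of Lemma~\ref{lemma:genus4-torsion-functions} and the values in Table~\ref{tab:genus4divisors}, treating the non-split case via norms and the explicit cube criterion in $\ell^\times$ when $3\nmid q-1$. The only organisational difference is that you apply Proposition~\ref{prop:computing-nu-with-div-f} once with $r=6$ to $\delta_1$, whereas the paper works prime-by-prime, taking $r=2$ on $3\delta_1,\delta_2,3\delta_1-\delta_2\in\Phi[2]$ and $r=3$ on $2\delta_1\in\Phi[3]$; since $\delta_2=-\delta_2$ your $3\delta_1+\delta_2$ coincides with the paper's $3\delta_1-\delta_2$, and the two computations agree by the Chinese remainder theorem.
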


A word about the notation: letting $\delta_1$ and $\delta_2$ be generators for $\Phi$ as in Lemma~\ref{lemma:genus4-component-group}, the $a_j$ give the contribution of $\delta_1$ to $J(K)(p')$ and the $b_j$ give the contribution of $\delta_2$ in the case $i\in k^\times$. When $i\notin k^\times$, the $a_j$ give the contribution of $3\delta_1$ while the $c_j$ give the contribution of $2\delta_1$. The subscripts on the constants roughly refer to the subscript on the corresponding one-cycle, except the subscript $0$ refers to contributions coming from the kernel of $\nu$ as in Proposition~\ref{prop:computing-nu-with-div-f}. For example, letting $r=2$ in that proposition, if $\gamma_1(3\delta_1)$ is nontrivial, then $a_1=2$. This is not quite correct in all cases; for example, when $i\in k^\times$, we use $a_1=2$ if $\gamma_j(3\delta_1)$ is nontrivial for any $j$. By the symmetry of the factors in $T(k)$ (according to Lemma~\ref{lemma:genus4-toric-group}), it doesn't matter which $\gamma_j$ yields a nontrivial value.

\begin{proof}
  The short exact sequence follows from Proposition~\ref{prop:bosch-liu-phi-constant} and the calculation of the component group $\Phi$ in Lemma~\ref{lemma:genus4-component-group}.

  Suppose $i \in k^\times$. Let $r=2$ in Proposition~\ref{prop:computing-nu-with-div-f}. Note that the nontrivial elements of $\Phi[2]$ are $3\delta_1$, $\delta_2$, and $3\delta_1-\delta_2$, with notation as in Lemma~\ref{lemma:genus4-component-group}. By Lemma~\ref{lemma:genus4-torsion-functions}, the relevant functions in Proposition~\ref{prop:computing-nu-with-div-f} are $\dv \frac{Z-W}{Z+W}$, $\dv \frac{X+Y}{Z+W}$, and $\dv \frac{Z-W}{X+Y}$ respectively. The conditions on the $b_j$ and the 2-part of the $a_j$ now follow from the latter proposition and Corollary~\ref{cor:toric-descent-with-functions}. For example, $\nu(\delta_2) = 0$ if and only if $\gamma_j\big(\dv \frac{X+Y}{Z+W}\big) \equiv 1 \pmod{k^{\times 2}}$ for all $j$, which is equivalent to $[H_2\cdot k^{\times 2}:k^{\times 2}] = 1$. 

  For $r=3$, we wish to determine whether $\nu(2\delta_1)$ is trivial or not; if it is, $3\mid a_0$ and $3\nmid a_1$. If it is not, then $3 \mid a_1$ and $3\nmid a_0$. The determination of $\nu(2\delta_1)$ proceeds in a similar way as in the $r=2$ case above.

  The $i \not\in k^\times$ case is similar, but with the following additional complications. First, suppose $r=2$. Observe that neither $\gamma_2\big(\dv \frac{Z-W}{Z+W}\big)$ nor $\gamma_1\big(\dv \frac{X+Y}{Z+W}\big)$ lie in $k^{\times 2}$ in this case. Next, the factors $\mu(T_j) := \chi_{\gamma_j}(T(k))$ for $j=1,2,3$ are not all isomorphic to each other by Lemma~\ref{lemma:genus4-toric-group}. Let $x\in J(k)$ be any element mapping to $3\delta_1 \in \Phi[2]$. Then even though $\nu(3\delta_1) \neq 0$, the 2-part of the order of $x$ depends on whether $\gamma_3\big(\dv \frac{Z-W}{Z+W}\big)$ lies in $2\mu(T_3)$ or not. Since $\mu(T_3) = \ell^\times$ and the norm gives an isomorphism
  \[
  \frac{\ell^\times}{\ell^{\times 2}} \to \frac{k^\times}{k^{\times 2}}
  \]
  one sees that $\gamma_3\big(\dv \frac{Z-W}{Z+W}\big)$ lies in $2\mu(T_3)$ if and only if $\Nm_{\ell/k} \eps(i,i,-1,1) \in k^{\times 2}$. This explains the condition on the $b_j$; the condition on the $a_j$ follows easily.

  Lastly, let $r=3$. If $3 \nmid (q-1)$, then $\gamma_1$ and $\gamma_2$ automatically take values in $k^{\times 3} = k^\times$. Therefore in order to determine if $\nu(2\delta_1) = 0$, we need to determine if $\gamma_3\big(\dv \frac{Z-W}{Z+W}\big)$ lies in $\ell^{\times 3}$. The condition for the $c_j$ now follows from the observations that $i^3 = -i$ and $k^\times \subset \ell^{\times 3}$.
\end{proof}

\subsection{Rationality of theta characteristics and cube roots of the canonical class}
\label{sec:rati-theta-char-genus4}

\begin{theorem}\label{thm:genus4-thetachar}
  Let $C$ be as in the hypotheses of Theorem~\ref{thm:genus4-torsion}. Let $i \in \kb^\times$ be a fixed square root of $-1$. Define groups $T_j \subset k^\times$ for $j=1,\dots,4$ as follows.
  
  If $i\in k^\times$, let $T_1$ be the subgroup generated by the entries in row $\dv (X+Y)$ of Table~\ref{tab:genus4divisors}. Similarly, let $T_2$, $T_3$, $T_4$ be the subgroups generated by rows $\dv (Z-W)$, $\dv (Z+W)$, and $\dv \frac{Z^2-W^2}{X+Y}$ respectively. 

  If $i\notin k^{\times}$, let $\ell = k(i)$, and let each $T_j$ be computed for the corresponding row as above, but this time taking the subgroup generated by the first two entries of the row, plus the norm from $\ell$ to $k$ of the third entry.

  Then $C$ has a rational theta characteristic if and only if some $T_j$ lies in $k^{\times 2}$.
\end{theorem}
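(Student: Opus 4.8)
The plan is to apply the $r=2$ divisibility criterion of Corollary~\ref{cor:toric-descent-with-functions}---or, when $i\notin k^\times$, its normal-torus refinement Corollary~\ref{cor:descent-normal-torus}, which applies since $T$ is a normal torus by \S\ref{sec:general-facts-1} and $2\mid q-1$---to the canonical class $L$, and to identify the resulting condition with ``some $T_j\subseteq k^{\times2}$''. I would fix the representative $D=\dv(X+Y)$ of $L$: this is the divisor of a section of $\mathscr{O}(1)=\omega_C$, hence lies in the canonical class, and a short computation with the intersection matrix of Lemma~\ref{lemma:genus4-component-group} and the specialization recipe of \S\ref{sec:genus4-setup-descent} gives $\deg\tau(\dv(X+Y))=(2,2,2)\in 2\Z^3$, so $D\in\dt C$ and $L\in\pt C$. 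The same computation yields $\deg\tau(\dv(Z-W))=(0,6,0)$, $\deg\tau(\dv(Z+W))=(0,0,6)$ and $\deg\tau(\dv\frac{Z^2-W^2}{X+Y})=(-2,4,4)$, and the key point is that these four multidegrees differ from $(2,2,2)$ by $0$, $v_2$, $v_3$ and $-v_1$ respectively, where $v_i$ are the generators of $M_{fib}$; since $v_1+v_2+v_3=0$ and $\{v_2,v_3\}$ is a $\Z$-basis of $M_{fib}$, these four differences represent the four distinct classes of $M_{fib}/2M_{fib}$.

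Next I would run the criterion. By Lemma~\ref{lemma:genus4-component-group} the group $\Phi[2]=\langle 3\delta_1\rangle\oplus\langle\delta_2\rangle$ has order $4$; a degree-$0$ divisor representing a class of $\Phi[2]$ has multidegree in $\tfrac12 M_{fib}$, and $\delta\mapsto(\text{its multidegree mod }M_{fib})$ is the canonical isomorphism $\Phi[2]\xrightarrow{\sim}\tfrac12 M_{fib}/M_{fib}$. For each $\delta$, choose such a representative $D_\delta$ and a function $f_\delta$ with $\deg(\dv f_\delta)=-2\deg\tau(D_\delta)\in M_{fib}$ (every element of $M_{fib}$ is the multidegree of a ratio of the forms $X\pm Y$, $Z\pm W$, so $f_\delta$ exists); by Proposition~\ref{prop:computing-nu-with-div-f}, $\chi_\gamma(\nu(\delta))\equiv\gamma(\dv f_\delta)$, and Corollary~\ref{cor:toric-descent-with-functions} then says $L\in 2\Pic C$ iff for some $\delta$ the divisor $D+\dv f_\delta$---another representative of $L$ in $\dt C$---has all of its $\gamma$-values in $2\mu$. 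Its multidegree is $(2,2,2)+\deg(\dv f_\delta)$, and as $\delta$ ranges over $\Phi[2]$ the class $\deg(\dv f_\delta)\bmod 2M_{fib}$ ranges bijectively over $M_{fib}/2M_{fib}$, because the doubling map $\tfrac12 M_{fib}/M_{fib}\xrightarrow{\sim}M_{fib}/2M_{fib}$ is an isomorphism; hence, after adjusting $D_\delta$ within its $M_{fib}$-coset, we may arrange $D+\dv f_\delta$ to have multidegree exactly one of the four listed above, the assignment $\delta\mapsto$ row being a bijection. Two representatives of $L$ in $\dt C$ with the same multidegree differ by a principal divisor of multidegree $0$, on which every $\gamma$ is trivial by Lemma~\ref{lemma:H1-isomorphism-character-group}; so the condition attached to $\delta$ is exactly that the $\gamma$-values of the corresponding row of Table~\ref{tab:genus4divisors} lie in $2\mu$.

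Finally I would read this off. When $i\in k^\times$ the torus is split, $\mu(T_{\gamma_j})=k^\times$ and $2\mu(T_{\gamma_j})=k^{\times2}$, and using $\gamma\in\{\gamma_1,\gamma_2,\gamma_3,\gamma_4\}$ the condition for a row $E$ is precisely that every entry of $E$ lies in $k^{\times2}$, i.e.\ $T_{(E)}\subseteq k^{\times2}$. When $i\notin k^\times$, Corollary~\ref{cor:descent-normal-torus} uses $\gamma\in\{\gamma_1,\gamma_2,\gamma_3+\gamma_4\}$; one has $(\gamma_3+\gamma_4)(E)=\Nm_{k(i)/k}\gamma_3(E)$, and since $2\mid q-1$ the norm induces $k(i)^\times/k(i)^{\times2}\xrightarrow{\sim}k^\times/k^{\times2}$, so the condition is again $T_{(E)}\subseteq k^{\times2}$ with $T_{(E)}$ as redefined in the statement. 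Combining the four cases, $L\in 2\Pic C$ iff some $T_j\subseteq k^{\times2}$, which is the assertion.

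The main obstacle is the lattice bookkeeping of the third paragraph: verifying that the four divisors $\dv(X+Y),\dv(Z-W),\dv(Z+W),\dv\frac{Z^2-W^2}{X+Y}$ are exactly the translates $D+\dv f_\delta$ (up to multidegree-$0$ principal divisors) as $\delta$ runs over $\Phi[2]$, via the identification of $\Phi[2]$ with $\tfrac12 M_{fib}/M_{fib}$ and the doubling isomorphism. The secondary work is the explicit verification of the entries of Table~\ref{tab:genus4divisors} and, in the nonsplit case, the routine passage through the norm map modulo squares.
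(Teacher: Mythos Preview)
Your proposal is correct and follows essentially the same route as the paper: both take $\dv(X+Y)\in\dt C$ as the canonical representative, apply Corollary~\ref{cor:toric-descent-with-functions} (or Corollary~\ref{cor:descent-normal-torus} when $i\notin k^\times$) with $r=2$, and identify the four translates $D+\dv f_\delta$ with the four rows $\dv(X+Y)$, $\dv(Z-W)$, $\dv(Z+W)$, $\dv\tfrac{Z^2-W^2}{X+Y}$. The paper simply names the four functions $f_\delta\in\{1,\frac{Z-W}{X+Y},\frac{Z+W}{X+Y},\frac{Z^2-W^2}{(X+Y)^2}\}$ and lets the reader check via Lemma~\ref{lemma:genus4-torsion-functions} that they exhaust $\Phi[2]$, whereas you supply the lattice bookkeeping (the identification $\Phi[2]\cong\tfrac12 M_{fib}/M_{fib}$ and the doubling isomorphism) to justify the same bijection; this is a welcome elaboration but not a different method.
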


\begin{proof}
  We apply Corollary~\ref{cor:toric-descent-with-functions}. The canonical divisor may be given by $\dv X+Y$, which lies in $\dt C$. There are 4 elements in $\Phi[2]$; the corresponding functions as in Corollary~\ref{cor:toric-descent-with-functions} are $1$, $\frac{Z-W}{X+Y}$, $\frac{Z+W}{X+Y}$, and $\frac{Z^2-W^2}{(X+Y)^2}$. Observe that
\[
\gamma_i\bigg(\dv (X+Y) + \dv\frac{Z-W}{X+Y}\bigg) = \gamma_i (\dv Z-W);
\]
one does similar calculations for the other functions.

  When $i\notin k^\times$, by Corollary~\ref{cor:descent-normal-torus} we need to evaluate $\gamma_1,\gamma_2$, and $\gamma_3+\gamma_4$. This explains why we take the norm of the third entry.
\end{proof}

\begin{theorem}\label{thm:genus4-cube-root}
  Let $C$ be as in Theorem~\ref{thm:genus4-torsion}. If $i\in k^\times$, let $S_1\subset k^\times$ be the subgroup generated by the entries of row $\dv (Z-W)$ in Table~\ref{tab:genus4divisors}. Similarly, let $S_2$, $S_3$ be the subgroups generated by the rows $\dv {Z+W}$ and $\dv \frac{(Z+W)^2}{Z-W}$ respectively. If $i \not\in k^\times$ and $3\mid (q-1)$, let $S_i$ be the subgroups generated by the first two entries of the corresponding rows and the norm from $\ell$ to $k$ of the third entry. In these two cases, $C$ has a rational cube root of the canonical class if and only if some $S_i$ lies in $k^{\times 3}$.

  If $i \not\in k^\times$ and $3\nmid (q-1)$, then $C$ has a rational cube root of the canonical class if and only if $\eps(i,i,-1,1)^{(q^2-1)/3} = 1$.
\end{theorem}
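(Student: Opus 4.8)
The plan is to run the descent of Corollary~\ref{cor:toric-descent-with-functions}, together with its normal-torus refinement Corollary~\ref{cor:descent-normal-torus} where the latter applies, with $r=3$ and $L$ the canonical class $K_C$, following the template of the proof of Theorem~\ref{thm:genus4-thetachar} but tracking what changes when one works modulo cubes rather than squares.

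First I would choose a divisor representing $K_C$ that lies in $\Div^{\{3\}}C$. The section divisor $\dv(X+Y)$ used for theta characteristics has $\deg\tau=(2,2,2)$, which lies in $2\Z^3$ but not in $3\Z^3$; however $\deg\tau(\dv(Z+W))=(0,0,6)$ and $\deg\tau(\dv(Z-W))=(0,6,0)$ both lie in $3\Z^3$, and $K_C\in\Pic^{\{3\}}C$ because $(2,2,2)\in 3\Z^3+M_{fib}$ (indeed $(2,2,2)\equiv(-4,2,2)\pmod{3}$), so a section divisor such as $\dv(Z+W)$ can serve as the representative $D$. By Lemma~\ref{lemma:genus4-component-group} we have $\Phi\isom\Z/6\Z\oplus\Z/2\Z$, so $\Phi[3]$ is cyclic of order $3$, generated by $2\delta_1$; for its elements $0,2\delta_1,4\delta_1$ I would produce the auxiliary functions $f_\delta$ with $\deg\dv f_\delta=-3\deg D_\delta$ using the degree computations of Lemma~\ref{lemma:genus4-torsion-functions} --- for instance $\deg\tau(D_{2\delta_1})=(0,2,-2)$ forces $\deg\dv f_{2\delta_1}=(0,-6,6)$, matched by $(Z+W)/(Z-W)$, and likewise $f_{4\delta_1}$ is matched by $(Z-W)/(Z+W)$ and $f_0$ by $1$. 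Multiplying through, the divisors $D+\dv f_\delta$ become, up to linear equivalence on $C$, the three section-type divisors whose evaluations under $\gamma_1,\dots,\gamma_4$ appear in the rows $\dv(Z-W)$, $\dv(Z+W)$ and $\dv\frac{(Z+W)^2}{Z-W}$ of Table~\ref{tab:genus4divisors}; these are exactly the rows generating $S_1,S_2,S_3$.

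With this in hand the two ``easy'' regimes drop out of the torus structure recorded at the end of \S~\ref{sec:general-facts-1}. If $i\in k^\times$ then $T$ is split, $3\mu(T_j)=k^{\times3}$ for every $j$, and Corollary~\ref{cor:toric-descent-with-functions} says $C$ has a rational cube root of $K_C$ if and only if some $\delta\in\Phi[3]$ makes every entry of the corresponding row a cube, i.e.\ if and only if some $S_i\subset k^{\times3}$. If $i\notin k^\times$ but $3\mid(q-1)$, then $T=\G_m^2\times R_{\ell/k}\G_m$ with $\ell=k(i)$ is a normal torus and $r=3$ divides $q-1$, so Corollary~\ref{cor:descent-normal-torus} applies: it suffices to evaluate $\gamma_1,\gamma_2$ and $\gamma_3+\gamma_4=\Nm\gamma_3$, and as the norm induces an isomorphism $\ell^\times/\ell^{\times3}\to k^\times/k^{\times3}$ the $\gamma_3$-contribution is represented by $\Nm_{\ell/k}$ of the third column, which is exactly the prescription in the statement, and the criterion is once more that some $S_i\subset k^{\times3}$.

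The remaining case, $i\notin k^\times$ together with $3\nmid(q-1)$, is the one I expect to require the most care, and is the whole point of the last assertion. Here Corollary~\ref{cor:descent-normal-torus} is unavailable because $r=3$ does not divide $q-1$, so one must use Corollary~\ref{cor:toric-descent-with-functions} with the $\Z[\gk]$-generators $\gamma_1,\gamma_2,\gamma_3$ of the principal subtori $\G_m,\G_m,R_{\ell/k}\G_m$. Since $3\nmid q-1$ we have $3\mu(T_1)=3\mu(T_2)=k^\times$, so the $\gamma_1$- and $\gamma_2$-conditions are vacuous, whereas $3\mu(T_3)=\ell^{\times3}$ has index $3$ in $\ell^\times$ (note $3\mid q^2-1$ because $q\equiv2\pmod{3}$). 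Thus the whole descent collapses to a single condition over $\ell$: whether the $\gamma_3$-value of the relevant $D+\dv f_\delta$, read from the third column of Table~\ref{tab:genus4divisors}, lies in $\ell^{\times3}$. Using that $k^\times$, $-1$, $i$ and perfect cubes all lie in $\ell^{\times3}$, and that $2$ is prime to $3$, each such value equals a cube in $\ell^\times$ times $\eps(i,i,-1,1)$ raised to a power coprime to $3$, whence chasing the third column yields the criterion $\eps(i,i,-1,1)^{(q^2-1)/3}=1$ of the theorem. The technical points to watch throughout are that the local parameters for $\gamma_3$ in Table~\ref{tab:genus4localparam} are normalized so that the $\gamma_3$-map genuinely takes values in $\mu(T_3)=\ell^\times$ rather than in $k^\times$, and the correct pairing of each $D+\dv f_\delta$ with its element of $\Phi[3]$; both are supplied by Tables~\ref{tab:genus4localparam} and~\ref{tab:genus4divisors} together with Lemma~\ref{lemma:genus4-torsion-functions}.
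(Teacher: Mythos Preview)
Your proposal follows essentially the same route as the paper's proof: choose a hyperplane-section canonical divisor lying in $\Div^{\{3\}}C$, identify $\Phi[3]=\langle 2\delta_1\rangle$ with the three auxiliary functions $1$, $(Z+W)/(Z-W)$, $(Z+W)^2/(Z-W)^2$, reduce to the three rows $\dv(Z-W)$, $\dv(Z+W)$, $\dv\tfrac{(Z+W)^2}{Z-W}$ of Table~\ref{tab:genus4divisors}, and then treat the three regimes via Theorem~\ref{thm:descent-with-nu}/Corollary~\ref{cor:toric-descent-with-functions} and Corollary~\ref{cor:descent-normal-torus}. The only cosmetic difference is that you take $\dv(Z+W)$ rather than the paper's $\dv(Z-W)$ as the base representative, which yields the same three test divisors; in the final case the paper argues just as you do, invoking $k^\times\subset\ell^{\times3}$ and $i^3=-i$ to strip the $\gamma_3$-column down to powers of $\eps(i,i,-1,1)$.
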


\begin{proof}
  The proof is analogous to that of Theorem~\ref{thm:genus4-thetachar}. Here, we use $\dv (Z-W)$ as our canonical divisor, observing that it lies in $\Div^{\{3\}} C$. We have $\Phi[3]$ has 3 elements with corresponding functions $1$, $\frac{Z+W}{Z-W}$, and $\frac{(Z+W)^2}{(Z-W)^2}$. Thus in Theorem~\ref{thm:descent-with-nu}, we set $r=3$ and use the divisors of the sections in the statement above. The case when $i\in k^{\times}$ follows from Theorem~\ref{thm:descent-with-nu}. The case when $i \not\in k^{\times}$ but $3\mid(q-1)$ follows from Corollary~\ref{cor:descent-normal-torus}.

  Now suppose that $i \not\in k^\times$ and $3\nmid (q-1)$. Since $k^{\times 3} = k^\times$, we automatically have $\gamma_1(D) = \gamma_2(D) = 1$ for $D$ the three relevant divisors. Therefore $C$ has a rational cube root of the canonical class if and only if $\gamma_3(D)$ and $\gamma_4(D)$ lie in $\ell^{\times 3}$. Since these are conjugate, it suffices that $\gamma_3(D) \in \ell^{\times 3}$. The claim now follows from the observations that $k^\times \subset \ell^{\times 3}$ and $i^3 = -i$.
\end{proof}

\begin{example}
  If $K=\Q_p$ with $p\equiv 5 \pmod{12}$, then $i\in \F_p^\times$ and $\frac{\F_p^\times}{\F_p^{\times 3}} = 1$. Therefore $C$ has a rational cube root of the canonical class for every $\eps$ satisfying Lemma~\ref{lemma:eps-regular}.
\end{example}

\section{Closing remarks}
\label{sec:closing-remarks}

For $C$ as in the hypotheses of \S~\ref{sec:eval-divis-1}, I hope to extend the above methods to compute the full rational torsion on the Jacobian of $C$. This requires knowing the torsion in the kernel of reduction; the existence of a $p$-adic uniformization for the Jacobian leads me to believe that such a computation is tractable. However, if $\Phi[p]\neq 0$, one must also determine if the $p$-primary torsion in $J(k)$ lifts to torsion in $J(K)$; this could be resolved if, for example, one knew how to determine if an element in the associated formal group of $J$ is divisible by $p$. Another goal is determining the representation of the absolute Galois group of $K$ on the torsion. In the prime-to-$p$ case, the action of the inertia group on the torsion under our hypotheses is well-known via the Picard-Lefschetz formula. Combining with the descent map described in this paper should be enough to deduce the Galois representation on the torsion.

It is unfortunate that the above methods do not, as is, work for general curves with totally degenerate reduction. One obstruction to generalizing the map in Theorem~\ref{thm:descent-with-nu} is that an algebraic torus need not be principally decomposable. To start, a robust structure theory for the character group of tori over $k$ is needed. However, for most cyclic groups $G$, there are infinitely many isomorphism classes of indecomposable $\Z[G]$-modules~\cite{heller-reiner1962}.

Another problem arises when $\gk$ permutes the vertices of $\Gamma$ in a nontrivial way. In this case, the construction $\Nm \gamma$ gives a way of generalizing the method in \S~\ref{sec:the-descent-map}, but in practice it is not quite satisfactory. For example, if $\alpha \in \ok^\times$ is a nonsquare, then one cannot at present use this method to determine if
\[
C: y^2 = \alpha g^2 + \pi h,
\]
where $g$ and $h$ are as in hypothesis $(H)$, has a rational theta characteristic.

\bibliography{/home/sharif/Research/biblio}
\bibliographystyle{plainurl}
\end{document}